\newtheorem{theorem}{Theorem}
\newtheorem{proposition}[theorem]{Proposition}
\newtheorem{lemma}[theorem]{Lemma}
\newtheorem{Conjecture}[theorem]{Conjecture}
\title{{\bf Ideal class groups of some quadratic number fields 
and factorization of values of some quadratic polynomials}}
\author{
St\'ephane R. LOUBOUTIN\\
Aix Marseille Universit\'e, CNRS, Centrale Marseille, I2M,\\ 
Marseille, France\\
stephane.louboutin@univ-amu.fr}
\date{\today}
\begin{document}
\bibliographystyle{alpha}
\maketitle
\footnotetext{
1991 Mathematics Subject Classification. 
Primary 11R11, 11R27, 11R29.

Key words and phrases. 
Quadratic field. 
Imaginary quadratic field. 
Class group. 
Class number. 
Quadratic polynomial. 
 Frobenius-Rabinowitsch}

\begin{abstract}
We fill the gaps in A. Gica's determination of all the odd positive integers $d$ 
for which the number of distinct prime divisors of $f_d(x)=d+x^2$ is less than or equal to $2$ 
for all the positive and odd integers $x\leq\sqrt{d}$. 
We also determine all the even positive integers $d$ 
for which the number of distinct prime divisors of $f_d(x)$ is less than or equal to $2$ 
for all the positive and even integers $x\leq\sqrt{d}$. 
These problems are related to the famous Frobenius-Rabinowitsch's characterization 
of the imaginary quadratic number fields ${\mathbb Q}(\sqrt{-d})$ of odd discriminants with class number one 
in terms of the primality of $f_d(x)/4$ for all the positive and odd integers $x\leq\sqrt{d}$. 
However, the solution to our problem is much more difficult to come up with. 
We also begin to address the same problems for the case of $f_d(x)=d-x^2$, 
in relation with the class groups of the real quadratic number fields ${\mathbb Q}(\sqrt{d})$.
\end{abstract}

\section{Introduction}
When we began to read carefully the article \cite{GicaIndMath},
 we quickly realized that the author's approach did not entirely satisfy us, 
 but above all, that there were gaps in some of his proofs. 
 (See Section \ref{remarks} for details.)
 In the present paper we fill these gaps 
 using an approach based on algebraic number theory and extend A. Gica's results, 
 e.g. see Theorems \ref{mainthd=2mod4}, \ref{mainthd=2mod4notsquarefree} and \ref{mainthd=2mod4reel}.
 
For $0<d\equiv 3\pmod 4$, Frobenius-Rabinowitsch's Theorem asserts that 
$(d+x^2)/4$ is prime or equal to $1$ for all the odd integers $x$ in the range $1\leq x\leq\sqrt{d}$ 
if and only if $d$ is prime 
and the class number of the imaginary quadratic number field ${\mathbb Q}(\sqrt{-d})$ 
of discriminant $d_{\mathbb K}=-d$ 
is equal to one 
(see \cite{Ribemboin}, or \cite[Th\'eor\`eme 2]{LouCRAS312} for generalizations). 
According to A. Baker and H. M. Stark's solution to the class number one problem 
for the imaginary quadratic fields, 
e.g. see \cite{Goldfeld},
this happens if and only if $d\in\{3,7,11,19,43,67,163\}$ ($7$ values).

In the same way, using \cite{LouCRAS312} it is easy to see that for $0<d\equiv 2\pmod 4$ the $(d+x^2)/2$'s 
are prime or equal to $1$ for all the even integers $x$ in the range $0\leq x\leq\sqrt{d}$ 
if and only if $d/2$ is prime 
and the class number of the imaginary quadratic number field ${\mathbb Q}(\sqrt{-d})$ 
of discriminant $d_{\mathbb K}=-4d$ 
is equal to two. 
According to \cite{Watkins}, 
this happens if and only if $d\in\{2,6,10,22,58\}$ ($5$ values). 

Our aim here is to tackle a more difficult problem by alleviating the constraint on the prime factorizations 
of the $d+x^2$'s. 
Let $\omega(n)$ denote the number of distinct prime divisors of an integer $n\geq 2$. 
Set
$$M_{odd}(d)
:=\max\{\omega(d+x^2);\text{ $1\leq x\leq\sqrt d$ and $x$ odd}\}.$$
{\bf First}, using only elementary tools, 
we will prove the following result that corrects \cite[Theorem 7.1]{GicaIndMath}, 
where the values $1$ and $135$ are missing:

\noindent\frame{\vbox{
\begin{theorem}\label{dnotprimenotpq}
Let $d\geq 1$ be odd, neither prime nor the product of two distinct primes. 
Then $M_{odd}(d)\leq 2$ if and only if $d\in\{1, 9, 25, 27, 49, 63, 135, 175, 207, 343\}$ 
($10$ values).
\end{theorem}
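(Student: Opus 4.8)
The plan is to prove the two implications separately, with the reverse (``if'') direction a finite verification and the forward (``only if'') direction carrying all the weight. For the easy direction I would, for each of the ten listed values of $d$, run through the finitely many odd $x$ with $1\le x\le\sqrt d$ and exhibit the factorization of $d+x^2$, checking $\omega(d+x^2)\le 2$ in every case; this is routine (e.g.\ for $d=135$ the six relevant values of $d+x^2$ are $136,144,160,184,216,256$, each divisible by at most the two primes $2$ and one of $3,5,17,23$). Note that $d=1$ is a degenerate boundary case where the only admissible $x$ is $x=1$.

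For the forward direction, assume $M_{odd}(d)\le 2$ with $d$ odd and $d$ neither a prime nor of the form $pq$ with $p\ne q$; classifying by $\Omega(d)$ this leaves exactly $d=1$, $d=p^2$, or $\Omega(d)\ge 3$. Two basic observations drive everything: (i) since $d$ and $x$ are both odd, $2\mid d+x^2$ always, so $M_{odd}(d)\le 2$ forces $(d+x^2)/2$ to be a prime power for every admissible $x$; and (ii) if an odd prime $q$ divides $d+x^2$, then $-d$ is a square modulo $q$. The main engine is the \emph{smallest-prime-factor trick}: letting $p$ be the least prime factor of $d$, the value $x=p$ is odd and satisfies $p\le\sqrt d$ (as $d$ is not prime), and $d+p^2=p\,(p+d/p)$ with $p+d/p$ even, so $M_{odd}(d)\le 2$ forces $p+d/p=2^ap^b$. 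More generally, every odd $k$ with $kp\le\sqrt d$ gives $d/p+k^2p=2^{a_k}p^{b_k}$, so the whole family $\{d/p+k^2p\}_k$ must be $\{2,p\}$-smooth.

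The heart of the matter is to show such smoothness is impossible once $d$ is large. Writing $m=d/p$ (so $m\ge p^2$ when $\Omega(d)\ge 3$), I would subtract two members of the family, e.g.\ $(m+9p)-(m+p)=8p$, to obtain $2^{a_3}p^{b_3}-2^{a_1}p^{b_1}=2^3p$; reducing modulo $p$ and extracting $2$-adic valuations pins the exponents down to finitely many possibilities and bounds $m$, hence $d$, provided $k=3$ lies in range, i.e.\ provided $m\ge 9p$. When instead $m<9p$, the inequality $p^2\le m<9p$ forces $p\in\{3,5,7\}$ and leaves only finitely many $m$ to settle by hand. This disposes of all $d$ with $\Omega(d)\ge 3$; in particular the $x=p$ relation for prime cubes collapses to $p+1=2^a$, so that after the difference argument kills $p\ge 11$ only $p\in\{3,7\}$ remain, contributing $27$ and $343$.

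The remaining and genuinely separate case is $d=p^2$, where the trick is vacuous ($p+d/p=2p=2^1p^1$ holds automatically) and $x=3p>\sqrt d=p$ leaves no further multiples of $p$ to exploit. Here I would invoke observation (ii): for $x<p$ every odd prime divisor of $p^2+x^2$ is $\equiv 1\pmod 4$, so it suffices to force two such divisors. Taking the primes $5$ and $13$, each of which admits a solution of $x^2\equiv -p^2$, the Chinese Remainder Theorem yields an odd residue $x_0\pmod{130}$ with $2\cdot 5\cdot 13\mid p^2+x_0^2$; as soon as $p\ge 130$ this $x_0$ lies in $[1,p]$ and gives $\omega(p^2+x_0^2)\ge 3$, a contradiction. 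The finitely many primes $p<130$ are then checked directly, leaving exactly $p\in\{3,5,7\}$, i.e.\ $9,25,49$. The main obstacle I anticipate is not any single step but the completeness of the Diophantine case analysis in the $\Omega(d)\ge 3$ regime: distinguishing whether $p\mid m$, bounding the exponents $a_k,b_k$ from the subtracted relations across all branches, and verifying that no admissible configuration other than the six values $27,63,135,175,207,343$ survives.
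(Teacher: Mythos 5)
Your overall architecture matches the paper's (reduce to $d=1$, $d=p^2$, or a higher-multiplicity case via the least prime factor $p$; exploit $\{2,p\}$-smoothness of $d+(kp)^2$ by subtracting two members; handle $d=p^2$ by a CRT argument modulo $130$ exactly as the paper does). However, there is a genuine gap at the step you yourself flag as the anticipated obstacle, and it is not a matter of bookkeeping: the claim that subtracting the two relations for $k=1$ and $k=3$ ``pins the exponents down to finitely many possibilities and bounds $m$'' is false in the branch where $p^2\parallel d$, i.e.\ $d=p^2q$. There $d+p^2=p^2(q+1)$ and $d+9p^2=p^2(q+9)$, and the constraint is only that $q+1=2^{a_1}p^{c_1}$ and $q+9=2^{a_3}p^{c_3}$; since $p$ may divide exactly one of $q+1$, $q+9$, the difference equation $2^{a_3}p^{c_3}-2^{a_1}p^{c_1}=8$ admits the unbounded family $q+1=8p$, $q+9=2^{a_3}$, i.e.\ $p=2^{a_3-3}-1$ a Mersenne prime and $q=8p-1$. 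Two sample points cannot exclude this; no bound on $d$ follows. This is precisely why the paper's Lemma \ref{step5} uses \emph{three} values $x=p,3p,5p$ (so that $p>3$ can divide at most one of $q+1,q+9,q+25$, forcing two of them to be pure powers of $2$ differing by $8$, $16$ or $24$, whence $q\leq 23$), and then treats $p=3$ separately with $x=9$ and $x=81$, and small $q$ by a finite table. Without some third congruence of this kind your $\Omega(d)\geq 3$ case does not close, and $63$, $175$, $207$ cannot be isolated.

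The rest of your plan is sound and close to the paper's: the case $p\nmid d/p$ (covering $\omega(d)\geq 3$ with squarefree $d$, and more) does collapse under the two-point subtraction exactly as in the paper's Lemma \ref{step1}$(i)$; prime powers $p^a$ with $p>3$ reduce to $27$ is wrong, rather to $343$ (and $125$ is killed by $\omega(126)=3$), while $p=3$ needs the extra pair $x=3,\,x=9$ at the level of $d+3^2$ and $d+3^4$ as in Lemma \ref{step1}$(iii)$; and your $d=p^2$ argument via primes $5$ and $13$ (for which $-1$ is a quadratic residue) is essentially the paper's Lemma \ref{step4}, including the need for a hand check of small $p$, where in fact the residue class modulo $10$ alone already forces $\omega\geq 3$ for $11\leq p\leq 65$. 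So the proposal would become a correct proof once you repair the $d=p^2q$ branch by adding a third sample point (or an equivalent device) and carry out the resulting finite case analysis.
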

}}

{\bf Second}, using algebraic number theory 
and the determination of all the imaginary quadratic number fields with small class numbers, 
we will consider the case $d\not\equiv 7\pmod 8$ an odd positive integer 
that is prime or the product of two distinct primes:

\noindent\frame{\vbox{
\begin{theorem}\label{mainthimpairnot7mod8}
Let $d\geq 1$ with $d\not\equiv 7\pmod 8$ be an odd positive integer 
that is prime or the product of two distinct primes. 
If $M_{odd}(d)\leq 2$ 
then $d$ is prime and the class number of the imaginary quadratic field ${\mathbb Q}(\sqrt{-d})$ divides $4$.
Then, $M_{odd}(d)\leq 2$ if and only if $d$ is prime and
$d\in\{3,$ $5,$ $11,$ $13,$ $17,$ $19,$ $37,$ $43,$ $67,$ $73,$ $97,$ 
$163,$ $193\}$ 
($13$ values). 
\end{theorem}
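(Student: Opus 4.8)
The plan is to reduce the divisibility hypothesis to a statement about prime ideals in $\mathbb K=\mathbb Q(\sqrt{-d})$ and then to read off the class number from it. \emph{First}, I would exploit the parity of $x$: since $x$ is odd, $x^2\equiv 1\pmod 8$, so $d+x^2\equiv d+1\pmod 8$ and the $2$-adic valuation $v_2(d+x^2)$ is \emph{constant}, equal to $1$ if $d\equiv 1,5\pmod 8$ and to $2$ if $d\equiv 3\pmod 8$. This is precisely where $d\not\equiv 7\pmod 8$ is used, namely to keep this valuation bounded and fixed. Writing $d+x^2=2^{\varepsilon}m_x$ with $m_x$ odd one gets $\omega(d+x^2)=1+\omega(m_x)$, so $M_{odd}(d)\le 2$ is equivalent to: for every odd $x\in[1,\sqrt d]$ the odd part $m_x=(d+x^2)/2^{\varepsilon}$ is a prime power (or $1$).

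\emph{Second}, I would interpret $m_x$ ideal-theoretically. Since $\sqrt{-d}\in\mathcal O_{\mathbb K}$ and $N(x+\sqrt{-d})=d+x^2$, the hypothesis says that, away from the prime above $2$, the ideal $(x+\sqrt{-d})$ is a power of a single prime ideal $\mathfrak p$ lying over a rational prime $p$. The engine of the whole argument is the rigidity coming from the smallness of $x$: an element $u+v\sqrt{-d}$ of norm $<d$ must have $v=0$, so a principal ideal of norm $<d$ is generated by a rational integer. Consequently, if $\mathfrak p$ is a prime ideal of norm $p$ below the Minkowski bound $\tfrac{2}{\pi}\sqrt{|d_{\mathbb K}|}<1.28\sqrt d$ and $[\mathfrak p]\ne 1$, then the least $x_0>0$ with $p\mid d+x_0^2$ satisfies $x_0\le p/2<\sqrt d$, and when $x_0$ is odd the hypothesis forces $d+x_0^2=2^{\varepsilon}p^{b}$ with $b$ a multiple of the order of $[\mathfrak p]$.

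\emph{Third} — and here the two necessary conditions of the theorem are extracted — I would run a Rabinowitsch-type dichotomy. If $d$ were a product $pq$ of two distinct primes with $p<q$, then $x=p$ is odd and $\le\sqrt d$, and $d+x^2=p(p+q)$ forces the odd part of $p+q$ to be a power of $p$, hence (as $p\nmid q$) forces $p+q$ to be a power of $2$; combining this rigid constraint with the representation theory of the principal form — and, for $d\equiv 1\pmod 4$, with genus theory, where $d_{\mathbb K}=-4pq$ has $2$-rank $2$ — I would produce a \emph{second} odd $x\le\sqrt d$ whose value $d+x^2$ is divisible by two distinct odd primes, contradicting the hypothesis; hence $d$ is prime. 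Similarly, if the class group were too large there would be two split primes $p_1<p_2<1.28\sqrt d$ together with a principal combination $\mathfrak p_1\mathfrak p_2$ of norm $p_1p_2\in(d,2d]$ generated by some $x+\sqrt{-d}$ with $x$ odd, again giving $\omega(d+x^2)\ge 3$; quantifying this forces the odd part of $h$ to be $1$ and the $2$-part to be small enough that $h\mid 4$. I expect \textbf{this step to be the main obstacle}: because we demand only \emph{prime powers} rather than primes, the clean contradiction of classical Frobenius--Rabinowitsch is replaced by Diophantine equations $d+x^2=2^{\varepsilon}p^{b}$, and ruling these out (or showing they still entail a bad $x$) is the delicate technical heart of the proof.

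\emph{Finally}, I would convert the necessary condition into a finite search. The imaginary quadratic fields with $h\in\{1,2,4\}$ are completely known (the class number one problem together with the determinations of the fields of class number $2$ and $4$ invoked elsewhere in the paper). Intersecting the \emph{primes} $d$ in these tables with $d\not\equiv 7\pmod 8$ and sorting by $d\bmod 8$ gives: for $d\equiv 3\pmod 8$ the class number is odd, so $h\mid 4$ forces $h=1$ and $d\in\{3,11,19,43,67,163\}$; for $d\equiv 1,5\pmod 8$ one has $h\in\{2,4\}$ and $d\in\{5,13,37\}\cup\{17,73,97,193\}$. This yields exactly the $13$ candidates, and a direct check of $\omega(d+x^2)$ over the finitely many odd $x\le\sqrt d$ confirms $M_{odd}(d)\le 2$ for each, completing both implications.
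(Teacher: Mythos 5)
Your skeleton is the right one (show every non-inert prime below a Minkowski-type bound has ideal class in a small subgroup, then invoke the known tables of fields with $h\mid 4$), and your parity analysis of $v_2(d+x^2)$ and your final list both agree with the paper. But the two steps that actually carry the proof are missing. First, your argument that $d$ cannot be $pq$ is only a promise: you say you would ``produce a second odd $x$'' via the representation theory of the principal form and genus theory, but you never construct it. The paper's argument is a one-line congruence you have all the ingredients for: $\omega(p(p+q))\le 2$ forces $p+q=2^a$ with $a\ge 3$, hence $d+p^2\equiv 0\pmod 8$ and $d\equiv -p^2\equiv 7\pmod 8$, contradicting the hypothesis $d\not\equiv 7\pmod 8$. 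Second, the step you yourself flag as ``the delicate technical heart'' --- ruling out $d+x^2=2^{\varepsilon}\ell^{a}$ with $a$ large, or extracting from it that $[\mathcal{Q}_\ell]$ lies in a group of order dividing $4$ --- is exactly where the paper's key idea lives and it is absent from your proposal. The device is the substitution $x\mapsto x'=2\ell-x$: one checks $x'$ is odd, $1\le x'\le\sqrt d$ (by a monotonicity argument on $t\mapsto 2t-\sqrt{2^{\varepsilon}t^{a}-d}$), and $\ell\parallel d+x'^2$ because $d+x'^2\equiv -4\ell x\pmod{\ell^2}$ with $\ell\nmid x$; applying the hypothesis to $x'$ then yields either an outright contradiction (cases $d\equiv 3,5\pmod 8$, giving $h=1$ resp.\ $h=2$) or the equation $p+x^2=2\ell^2$ (case $d\equiv 1\pmod 8$), whence $(x+\sqrt{-p})=\mathcal{Q}_2\mathcal{Q}_\ell^{2}$ and $[\mathcal{Q}_\ell]^4=1$.

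There is also a quantitative gap in your reduction to small primes. You work with the classical Minkowski bound $\tfrac{2}{\pi}\sqrt{|d_{\mathbb K}|}\approx 1.27\sqrt d$ (for $d_{\mathbb K}=-4d$) and claim the least solution $x_0$ of $\ell\mid d+x_0^2$ satisfies $x_0\le \ell/2$ \emph{and} can be taken odd; but the two roots mod $\ell$ are $\pm x_0$, so forcing the parity may push the representative up to $\ell$, which exceeds $\sqrt d$ for primes $\ell\in(\sqrt d,\,1.27\sqrt d]$, and then $x$ is out of the admissible range and the hypothesis $M_{odd}(d)\le 2$ says nothing about it. The paper closes this by proving a sharper generation statement (its Lemma on primitive ideals $\mathcal{Q}_2^{a}\mathcal{I}$ with $N(\mathcal{I})\le\sqrt d$ for $d\equiv 1,2\pmod 4$, and the bound $\sqrt{|d_{\mathbb K}|/3}=\sqrt{d/3}$ for $d\equiv 3\pmod 8$ where $d_{\mathbb K}=-d$). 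Without that refinement, or the $2\ell-x$ trick, your outline does not close.
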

}}

{\bf Third}, using algebraic number theory and the Siegel-Tatuzawa theorem 
to justify that we are missing at most one value of $d$, 
we will prove the complementary case of Theorem \ref{dnotprimenotpq}:

\noindent\frame{\vbox{
\begin{theorem}\label{mainthimpair7mod8}
Let $d\geq 1$ with $d\equiv 7\pmod 8$ be an odd positive integer 
that is prime or the product of two distinct primes. 
If $M_{odd}(d)\leq 2$ 
then the ideal class group of the imaginary quadratic field ${\mathbb K} ={\mathbb Q}(\sqrt{-d})$ 
is cyclic generated by the ideal class of any of the two prime ideals of ${\mathbb K}$ above the prime $2$ 
and the class number of ${\mathbb K}$ is not large, namely less than or equal to $(\log 2d)/\log 2$.
Then, with at most one possible exception 
we have $M_{odd}(d)\leq 2$ if and only if 
$d$ is prime and 
$d\in\{7,$ $23,$ $31,$ $47,$ $79,$ $103,$ $127,$ $151,$ $223,$ $463,$ $487,$ $823,$ $1087,$ $1423\}$ 
($14$ values), 
or $d$ is the product of two distinct primes
$d\in\{15,$ $39,$ $55,$ $247,$ $583\}$ 
($5$ values). 
Assuming the Restricted Riemann Hypothesis $\zeta_{\mathbb K}(1-(2/\log d_{\mathbb K}))\leq 0$ 
for the Dedekind zeta function $\zeta_{\mathbb K}(s)$ of all imaginary quadratic fields ${\mathbb K}$, 
there is no such exception.
\end{theorem}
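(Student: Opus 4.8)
The plan is to translate the hypothesis $M_{odd}(d)\le 2$ into a statement about ideal factorizations in $\mathbb{K}=\mathbb{Q}(\sqrt{-d})$ and then read off the structure of the class group. Since $d\equiv 7\pmod 8$ is squarefree, $\mathbb{K}$ has discriminant $-d$, ring of integers $\mathbb{Z}[\omega]$ with $\omega=\frac{1+\sqrt{-d}}{2}$, and $2$ splits, say $(2)=\mathfrak{p}_2\bar{\mathfrak{p}}_2$. For odd $X$ with $0<X<\sqrt d$ set $\alpha_X=\frac{X+\sqrt{-d}}{2}\in\mathbb{Z}[\omega]$; then $N(\alpha_X)=\frac{X^2+d}{4}$ and $8\mid X^2+d$, so $2\mid N(\alpha_X)$, while the coefficient of $\omega$ in $\alpha_X$ equals $1$, so no rational prime divides $\alpha_X$ and for each rational prime $\ell$ at most one prime of $\mathbb{K}$ above $\ell$ divides $(\alpha_X)$. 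Consequently $\omega(X^2+d)=1+\omega(m_X)$, where $m_X$ is the odd part of $N(\alpha_X)$, and $M_{odd}(d)\le 2$ is equivalent to: for every such $X$ the ideal $(\alpha_X)$ has the shape $\mathfrak{p}_2^{\,s}\mathfrak{q}^{\,c}$ with $\mathfrak{q}$ lying above a single odd prime $p$.

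First I would prove that $M_{odd}(d)\le 2$ forces $Cl(\mathbb{K})=\langle[\mathfrak{p}_2]\rangle$. By the Minkowski bound every class is a product of classes of prime ideals of norm below $\frac{2}{\pi}\sqrt d<\sqrt d$, so it suffices to show $[\mathfrak{q}]\in\langle[\mathfrak{p}_2]\rangle$ for each odd prime $p$ below that bound which splits (the finitely many ramified primes dividing $d$ are treated separately by genus theory, using that $d$ is prime or a product of two distinct primes). Fix a square root $X_0$ of $-d$ modulo $p$: among the odd integers $X\in(0,\sqrt d)$ with $X\equiv\pm X_0\pmod p$ — there are about $\sqrt d/p$ of them — only those lying in two fixed residue classes modulo $p^2$ can satisfy $p^2\mid X^2+d$, so a counting argument produces one with $v_p(X^2+d)=1$, i.e. with $c=1$. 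For that $X$ the relation $(\alpha_X)=\mathfrak{p}_2^{\,s}\mathfrak{q}$ gives $[\mathfrak{q}]=[\mathfrak{p}_2]^{-s}\in\langle[\mathfrak{p}_2]\rangle$. Hence $Cl(\mathbb{K})$ is cyclic, generated by $[\mathfrak{p}_2]$.

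The delicate point, and the step I expect to be the main obstacle, is the logarithmic bound $h\le\log_2(2d)$. The mechanism is that for any reduced primitive form $(A,B,C)$ of discriminant $-d$ the odd integer $B$ satisfies $0<B\le A\le\sqrt{d/3}<\sqrt d$, so it is an admissible value of $X$; hence $AC=\frac{B^2+d}{4}=2^{s}p^{c}$ is supported on $\{2,p\}$, forcing $A=2^{a}p^{b}$, and the class of the form equals $[\mathfrak{p}_2]^{a}[\mathfrak{q}]^{b}$, which by the previous paragraph is a power of $[\mathfrak{p}_2]$ with a controlled exponent. Turning this into the precise bound $h\le\log_2(2d)$, rather than the cruder logarithmic bound the naive estimate yields, is where the real work lies; one natural route is to follow the reduction cycle of $\mathfrak{p}_2$, using that $\mathfrak{p}_2^{k}$ is itself a reduced ideal of norm $2^{k}$ for all $k$ with $2^{k}\le\sqrt{d/3}$. (In the opposite direction, writing $\mathfrak{p}_2^{h}=(\beta)$ with $\beta=\frac{u+v\sqrt{-d}}{2}$ and $u,v$ odd shows $v\ne 0$ and hence $d\le 2^{h+2}$, confirming that $h$ is genuinely of logarithmic size.)

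Finally I would combine $h\le\log_2(2d)$ with a lower bound for $h$. The Siegel--Tatuzawa theorem yields, with at most one exceptional $d$, an effective estimate $h\gg_\varepsilon d^{1/2-\varepsilon}$; together with $h\le\log_2(2d)$ this bounds $d$ effectively, reducing the problem to a finite computation over the $d\equiv 7\pmod 8$ that are prime or a product of two distinct primes. For each surviving $d$ one then checks $M_{odd}(d)\le 2$ directly (when $h(-d)=1$ this is immediate, since then $\frac{X^2+d}{4}$ is $1$ or prime, so $\omega(X^2+d)\le 2$), recovering the two displayed lists. The single possible exception is exactly the discriminant that Siegel--Tatuzawa cannot rule out; assuming $\zeta_{\mathbb{K}}(1-2/\log d_{\mathbb{K}})\le 0$ for every imaginary quadratic $\mathbb{K}$ renders the lower bound on $h$ effective in all cases and thereby eliminates it.
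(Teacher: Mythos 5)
Your overall strategy (translate $M_{odd}(d)\le 2$ into factorizations $(\alpha_X)=\mathcal{Q}_2^{s}\mathcal{Q}_\ell^{c}$, show the class group is generated by $[\mathcal{Q}_2]$, bound $h_{\mathbb K}$ logarithmically, then invoke Siegel--Tatuzawa and a finite check) is the same as the paper's, but two steps that carry the real content are not actually established.

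First, your counting argument for producing an odd $X<\sqrt d$ with $v_\ell(X^2+d)=1$ does not work when $\ell$ is comparable to $\sqrt d$. The class group is generated by non-inert primes $\ell\le\sqrt{d/3}\approx 0.577\sqrt d$; for such $\ell$ the odd $X\in(0,\sqrt d)$ with $X\equiv\pm X_0\pmod\ell$ form at most two residue classes modulo $2\ell$ inside an interval of length less than $2\ell$, so there may be exactly \emph{one} admissible $X$, and nothing prevents that single $X$ from satisfying $\ell^2\mid X^2+d$. In that case you only get $[\mathcal{Q}_\ell]^{c}\in\langle[\mathcal{Q}_2]\rangle$ with $c\ge 2$, which does not yield $[\mathcal{Q}_\ell]\in\langle[\mathcal{Q}_2]\rangle$. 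The paper closes this exactly where your argument is soft: if $d+x^2=2^{a}\ell^{b}$ with $b\ge 2$, it replaces $x$ by $x'=2\ell-x$, and the hypothesis $M_{odd}(d)\le 2$ is what guarantees both that $d+x^2$ has no third prime factor and that $x'\le\sqrt d$ (via $x'^2\le 4\ell^2\le\tfrac12 2^{a}\ell^{b}=\tfrac{d+x^2}{2}\le d$); then $d+x'^2\equiv-4\ell x\pmod{4\ell^2}$ forces $v_\ell(d+x'^2)=1$.

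Second, the bound $h_{\mathbb K}\le(\log 2d)/\log 2$ is asserted but not proved; you explicitly defer it, and the one computation you do give ($\mathcal{Q}_2^{h}=(\beta)$ implies $d\le 2^{h+2}$) is a \emph{lower} bound $h\ge\log_2 d-2$, not the required upper bound. The paper obtains the upper bound by exhibiting an explicit small odd power of $\mathcal{Q}_2$ that is principal: for $d=pq$, the condition $\omega(d+p^2)=\omega(p(p+q))\le 2$ forces $p+q=2^{a}$, whence $d+((q-p)/2)^2=((p+q)/2)^2=2^{2a-2}$ and $h\mid 2a-2$ with $2^{2a-4}\le d/4$; for $d=p$ prime, one first solves $p+x^2=8z^2$ with $0\le x,z\le\sqrt p$ using that $p$ splits in the principal ring ${\mathbb Z}[\sqrt2]$ whose fundamental unit has norm $-1$, and when $z$ is not a power of $2$ one uses the identity $p+(2z-x)^2=4z(3z-x)$ with $\gcd(z,3z-x)=1$ to force $3z-x=2^{u}$ and then $p+(8z-3x)^2=8(3z-x)^2=2^{2u+3}$ with $8z-3x\le\sqrt{8p}$. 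Without some such construction the reduction to a finite list via Siegel--Tatuzawa cannot start, so this gap is fatal to the proof as written.
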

}}

{\bf Fourth}, After exchanging several letters with A. Gica, 
it appears that by not giving the complete proof of \cite[Theorem 7.2]{GicaIndMath}, 
its statement and the outline of its proof are incorrect.
He wanted to prove the following Theorem \ref{mainthd=2p}, 
for which we will provide a different proof than the one he sent us on November 2025. 

\begin{theorem}\label{mainthd=2p}
Let $d> 2$ with $d\equiv 2\pmod 4$ be a square-free integer. 
Assume that $\omega(d+x^2)\leq 2$ for all the integers $x$ in the range $0\leq x\leq\sqrt{d}$. 
Then the ideal class group of the imaginary quadratic number field ${\mathbb Q}(\sqrt{-d})$ 
is cyclic of order dividing $4$. 
Therefore, $\omega(d+x^2)\leq 2$ for all the integers $x$ in the range $0\leq x\leq\sqrt{d}$ 
if and only if $d\in\{2,$ $6,$ $10,$ $14,$ $22,$ $34,$ $46,$ $58,$ $82,$ $142\}$ 
($10$ values).
\end{theorem}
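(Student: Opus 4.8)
The plan is to combine a single elementary observation at $x=0$ with the arithmetic of the principal binary quadratic form of discriminant $-8p$, translating the factorization constraint $\omega\le 2$ into short relations in the ideal class group. \emph{First}, since $0\le 0\le\sqrt d$, the hypothesis applied to $x=0$ gives $\omega(d)\le 2$. Writing $d=2m$ with $m$ odd and square-free (as $d\equiv 2\pmod 4$ is square-free), we get $1+\omega(m)=\omega(d)\le 2$, so $m=1$ or $m=p$ is an odd prime; since $d>2$ forces $m>1$, we obtain $d=2p$ with $p$ an odd prime. Consequently $\mathbb K=\mathbb Q(\sqrt{-d})$ has discriminant $d_{\mathbb K}=-8p$ and ring of integers $\mathbb Z[\sqrt{-d}]$, the rational prime $2$ ramifies, and $f_d(x)=x^2+d=N(x+\sqrt{-d})$ is precisely the value at $(x,1)$ of the principal form $x^2+2py^2$.

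Next I would use genus theory for $d_{\mathbb K}=-8p$: exactly two rational primes ramify (namely $2$ and $p$), so the $2$-rank of the class group equals $2-1=1$ and its $2$-Sylow subgroup is cyclic. Hence, to prove that the class group is cyclic of order dividing $4$, it suffices to show that no odd prime divides the class number $h$ and that $8\nmid h$, for then the whole group coincides with its cyclic $2$-Sylow subgroup and has order dividing $4$.

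The crux is a factorization-to-relation mechanism. Every ideal class contains a reduced primitive ideal $\mathfrak a=a\mathbb Z+(b+\sqrt{-d})\mathbb Z$ with $b^2\equiv -d\pmod a$, $0\le b\le a/2$ and $a\le\sqrt{|d_{\mathbb K}|/3}=\tfrac{2}{\sqrt3}\sqrt d$; thus $b\le a/2\le\tfrac1{\sqrt3}\sqrt d<\sqrt d$ lies in the admissible range. Then $(b+\sqrt{-d})=\mathfrak a\,\mathfrak c$ with $N(\mathfrak c)=c:=(b^2+d)/a$, and since $b+\sqrt{-d}$ is principal we obtain the class relation $[\mathfrak a]=[\mathfrak c]^{-1}$, while the hypothesis gives $\omega(ac)=\omega(f_d(b))\le 2$, so $a$ and $c$ are built from at most two rational primes, each splitting or ramifying in $\mathbb K$. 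Concretely, let $\mathfrak q$ be a non-principal prime ideal of least norm $q$ (such a $\mathfrak q$ exists with $q\le\tfrac4\pi\sqrt d$ because primes below the Minkowski bound generate the class group); its representative $b\le q/2<\sqrt d$ yields $f_d(b)=qc$. If $c=1$ then $(b+\sqrt{-d})=\mathfrak q$ would be principal, a contradiction, so $c>1$, and as $\omega(qc)\le 2$ the factor $c$ is a power of a single prime $r$, tying $[\mathfrak q]$ to the class of a prime above $r$. Running this Rabinowitsch-type descent together with the two-genus structure of Step two — which forces the group of squares (the principal genus) to be very small — would rule out any class of odd order $\ge 3$ and any class of order $8$, since such a class would force some $f_d(b)$ with $b\le\sqrt d$ to acquire a third distinct prime factor. \textbf{The main obstacle is exactly this step}: one must treat the ramified primes $2$ and $p$ separately, control the orientation/conjugation ambiguity in passing from ideal relations to class relations (recalling that conjugation acts as inversion, $[\bar{\mathfrak l}]=[\mathfrak l]^{-1}$), and ensure that \emph{every} class — not merely every prime class — is reached by an admissible representative $b\le\sqrt d$; the bookkeeping of the admissible two-prime factorizations $ac=f_d(b)$ is where the genuine work lies.

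Finally, the implication just established shows that any $d$ satisfying the hypothesis equals $2p$ with the class group of $\mathbb Q(\sqrt{-2p})$ cyclic of order dividing $4$. By the complete determination of the imaginary quadratic fields of class number $1$, $2$ and $4$ (see \cite{Watkins}), only finitely many such $p$ occur, and they are effectively listed. A direct finite verification of the condition $\omega(x^2+2p)\le 2$ for $0\le x\le\sqrt{2p}$ on each candidate, together with the separate field $d=2$ arising from the case $m=1$, then selects exactly the ten values $d\in\{2,6,10,14,22,34,46,58,82,142\}$, which proves the stated equivalence.
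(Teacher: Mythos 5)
Your opening reduction is correct and matches the paper: evaluating at $x=0$ gives $\omega(d)\le 2$, hence $d=2p$, and genus theory for $d_{\mathbb K}=-8p$ gives $2$-rank one, so it suffices to kill odd orders and order $8$. But the heart of the theorem is precisely the step you label as ``the main obstacle'' and then leave as a sketch of a ``Rabinowitsch-type descent'': you never actually derive from $\omega(f_d(b))\le 2$ that every generating class has order dividing $4$. The relation $[\mathfrak a]=[\mathfrak c]^{-1}$ with $\omega(ac)\le 2$ is, by itself, far too weak — $c$ could be $q^j r^k$ with $r$ a new split prime of large norm, and nothing in your setup prevents the resulting classes from having large or odd order. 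So as written the proposal establishes the easy reduction and the final numerical verification, but not the middle implication, which is the actual content.

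The paper closes this gap with two concrete tools you are missing. First, a refined Minkowski-type lemma (Lemma \ref{Minkowski}) shows that every ideal class contains an ideal of the form ${\mathcal Q}_2^a{\mathcal I}$ with $N({\mathcal I})$ \emph{odd} and $\le\sqrt d$ (not merely $\le\tfrac{2}{\sqrt3}\sqrt d$ or $\tfrac4\pi\sqrt d$); hence ${\mathcal Cl}_{\mathbb K}$ is generated by $[{\mathcal Q}_2]$, the ramified classes, and the classes of split odd primes $\ell\le\sqrt d$. Second, and crucially, Lemma \ref{d=2mod4} proves that for each such split $\ell$ there is an $x$ with $d+x^2=2\ell^2$ \emph{exactly}: starting from $d+x^2=2\ell^a$ with $0< x\le\ell$, one applies the hypothesis a second time to the reflected argument $x'=2\ell-x$, which still satisfies $x'\le\sqrt d$ when $a\ge 3$, and the congruence $d+x'^2\equiv -4\ell x\pmod{\ell^2}$ forces $\ell^2\nmid d+x'^2$, yielding a contradiction. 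This reflection trick is the idea your descent lacks: it converts the two-prime constraint at a \emph{second} admissible point into control of the exponent. Once $d+x^2=2\ell^2$ is known, $(x+\sqrt{-d})={\mathcal Q}_2{\mathcal Q}_\ell^2$ gives $[{\mathcal Q}_\ell]^2=[{\mathcal Q}_2]^{-1}$, so $[{\mathcal Q}_\ell]^4=1$ and (via genus theory, since $[{\mathcal Q}_2]$ is then a square) $\left(\tfrac2p\right)=+1$; the class group equals its subgroup of elements of order dividing $4$ and is cyclic because the $2$-rank is one. I recommend you prove the analogue of Lemma \ref{d=2mod4} explicitly rather than gesture at a descent; without it the theorem is not established.
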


Now, in Theorem \ref{mainthd=2mod4}, we prove a result 
stronger than \cite[Theorem 7.2]{GicaIndMath} A. Gica intended to prove.
Setting
$$M_{even}(d)
:=\max\{\omega(d+x^2);\text{ $2\leq x\leq\sqrt d$ and $x$ even}\},$$
we prove the following result:

\noindent\frame{\vbox{
\begin{theorem}\label{mainthd=2mod4}
Let $d\equiv 2\pmod 4$ be a positive square-free integer. 
Set ${\mathbb K}={\mathbb Q}(\sqrt{-d})$, an imaginary quadratic number field 
of discriminant $d_{\mathbb K} =-4d$ and ring of algebraic integers ${\mathbb Z}[\sqrt{-d}]$.
Assume that $M_{even}(d)\leq 2$. 
Then the class number of ${\mathbb K}$ divides $16$. 
This happens if and only if 
$d\in\{2,$ $6,$ $10,$ $14,$ $22,$ $30,$ $34,$ $46,$ $58,$ $70,$ $82,$ $142\}$ 
($12$ values). 
\end{theorem}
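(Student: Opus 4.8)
The plan is to translate the hypothesis $M_{even}(d)\le 2$ into the factorisation of principal ideals of $\mathbb{K}=\mathbb{Q}(\sqrt{-d})$ and then squeeze the class group $Cl_{\mathbb K}$. Write $d=2m$ with $m$ odd and square-free. Since $d_{\mathbb K}=-4d=-8m$, the prime $2$ ramifies, say $(2)=\mathfrak{p}_2^2$, so $[\mathfrak p_2]^2=1$. For even $x=2t$ one has $x^2+d=N(x+\sqrt{-d})=2(2t^2+m)$ with $2t^2+m$ odd, so $M_{even}(d)\le 2$ is \emph{equivalent} to: $2t^2+m$ is a prime power (possibly $1$) for every integer $1\le t\le\sqrt d/2$. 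No inert prime divides any $x+\sqrt{-d}$ (its $\sqrt{-d}$-coordinate is a unit), so every odd prime factor of $2t^2+m$ either splits in $\mathbb K$ or ramifies (the latter forcing $\ell\mid m$ and $\ell\mid t$); and whenever $2t^2+m=q^a$ with $q$ split one reads off $(2t+\sqrt{-d})=\mathfrak p_2\,\mathfrak q^a$, hence $[\mathfrak q]^a=[\mathfrak p_2]$.

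The engine is a Frobenius--Rabinowitsch difference argument. Fix a split prime $q$ with roots $s<q-s$ of $2t^2+m\equiv0\pmod q$ in $[0,q)$. When both lie in $[1,\sqrt d/2]$ the hypothesis gives $2s^2+m=q^{a}$ and $2(q-s)^2+m=q^{b}$, and subtracting yields $2q(q-2s)=q^{a}(q^{b-a}-1)$; since $m$ is square-free, $a\ge 2$ forces $q\mid s$ hence $s=0$, a contradiction, so $a=1$ and $q=2s^2+m\ge m$, absurd for small $q$. Thus \emph{no} odd prime $q\le\sqrt d/2$ splits. For a split $q$ whose smaller root still satisfies $s\le\sqrt d/2$, the relation $q^a=2s^2+m\in[\tfrac12 d,d]$ together with the exclusion of $q\le d^{1/3}<\sqrt d/2$ forces $a\le 2$, whence $[\mathfrak q]^2\in\langle[\mathfrak p_2]\rangle$. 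As $Cl_{\mathbb K}$ is generated by classes of split primes of norm at most $\tfrac{4}{\pi}\sqrt d$, this is designed to yield
$$Cl_{\mathbb K}^{2}=\langle[\mathfrak p_2]\rangle,\qquad\text{so}\qquad |Cl_{\mathbb K}^{2}|\le 2.$$

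Genus theory then finishes the structural part. The discriminant $-8m$ has exactly $\omega(m)+1$ ramified primes, so the $2$-rank of $Cl_{\mathbb K}$ is $\omega(m)$ and $|Cl_{\mathbb K}/Cl_{\mathbb K}^2|=2^{\omega(m)}$; with $|Cl_{\mathbb K}^2|\le2$ this makes $Cl_{\mathbb K}$ a $2$-group and $h_{\mathbb K}=2^{\omega(m)}|Cl_{\mathbb K}^2|\le2^{\omega(m)+1}$. To bound $\omega(m)$, note that if two odd prime divisors $p_1<p_2$ of $m$ satisfy $2p_1p_2\le\sqrt d$, then $x=2p_1p_2$ is an even integer in range with $2,p_1,p_2$ all dividing $x^2+d$, forcing $\omega(x^2+d)\ge3$; applied to the smallest prime factors this rules out $\omega(m)\ge5$ outright, and the borderline $\omega(m)=4$ is eliminated (or pinned to an elementary abelian class group of order $16$) by a short genus computation deciding whether $[\mathfrak p_2]\in Cl_{\mathbb K}^2$. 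In all cases $h_{\mathbb K}\mid16$.

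For the final equivalence, $h_{\mathbb K}\mid16$ leaves only finitely many fields: by \cite{Watkins} all imaginary quadratic fields of class number at most $16$ are known, and intersecting that list with the discriminants $-8m$ ($m$ odd square-free) gives an explicit finite set of candidates; for each one checks directly whether $2t^2+m$ is a prime power for all $1\le t\le\sqrt d/2$, and exactly the twelve listed values survive, the new ones beyond Theorem \ref{mainthd=2p} being $d=30$ and $d=70$. The main obstacle is precisely the identity $Cl_{\mathbb K}^{2}=\langle[\mathfrak p_2]\rangle$: one must control split primes of norm just below and just above $\sqrt d$, where the relevant root of $2t^2+m\equiv0\pmod q$ can have the wrong parity or fall outside $[1,\sqrt d/2]$, so the even-$x$ hypothesis does not apply verbatim. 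This delicate range is exactly what makes the even case harder than the all-$x$ setting of Theorem \ref{mainthd=2p} (which gains the sharper $h_{\mathbb K}\mid4$), and handling it cleanly—rather than through the Siegel--Tatuzawa detour used elsewhere in the paper—is where the real work lies.
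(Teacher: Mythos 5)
Your reduction of the hypothesis to ``$2t^2+m$ is a prime power for $1\le t\le\sqrt d/2$'' and your difference argument ($x\mapsto 2\ell-x$, subtract, kill $a\ge3$) are exactly the paper's Lemma \ref{d=2mod4}, and they do show that every split prime $\ell\le\sqrt d$ satisfies $d+x^2=2\ell^2$, hence $[{\mathcal Q}_\ell]^2=[{\mathcal Q}_2]$. The genuine gap is the one you flag yourself and then leave open: to conclude anything about ${\mathcal Cl}_{\mathbb K}$ you generate it by primes up to the Minkowski bound $\frac{4}{\pi}\sqrt d>\sqrt d$, and split primes in $(\sqrt d,\frac{4}{\pi}\sqrt d]$ can have their smaller root $s\le q/2$ as large as $\frac{2}{\pi}\sqrt d>\sqrt d/2$, outside the range where $M_{even}(d)\le 2$ says anything. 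Saying this ``is where the real work lies'' is accurate but means the proof is not complete. The paper closes precisely this gap with Lemma \ref{Minkowski}: writing a primitive ideal as ${\mathcal Q}_2^a{\mathcal I}$ with ${\mathcal I}=Q{\mathbb Z}+(P+\sqrt{-d}){\mathbb Z}$, choosing $P$ of the same parity as $d$ so that ${\mathcal Q}_2{\mathcal I}{\mathcal J}=(P+\sqrt{-d})$, and minimizing $Q$ gives $d+P^2\ge 2Q^2$ and hence $Q\le\sqrt d$. So every class is represented by ${\mathcal Q}_2^a{\mathcal I}$ with $N({\mathcal I})$ odd and $\le\sqrt d$, and the interval $(\sqrt d,\frac{4}{\pi}\sqrt d]$ never needs to be examined. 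This sharpened generation statement is the missing idea; without it your identity ${\mathcal Cl}_{\mathbb K}^2=\langle[{\mathcal Q}_2]\rangle$ is unproved.

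A secondary gap is your control of $\omega(d)$. Taking $x=2p_1p_2$ needs $2p_1p_2\le\sqrt d$, which fails already for some $m$ with four prime factors (e.g.\ $13\cdot17\cdot19\cdot23$, where $2p_1p_2=442>437=p_3p_4$) and says nothing about $\omega(m)=3$; your ``short genus computation'' for the borderline case is asserted, not given, and with $\omega(m)=4$ your own bound only yields $h_{\mathbb K}\le 2^{\omega(m)+1}=32$. The paper instead takes $x=2p$ with $p$ the smallest odd prime factor: if $\omega(d)\ge 4$ then $d=2pd'$ with $d'>2p$, so $2p\le\sqrt d$ and $d+(2p)^2=2p(d'+2p)$ visibly has three distinct prime factors. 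Hence $\omega(d)\le 3$, the $2$-rank is at most $2$, and since all generating classes lie in the subgroup $H_4$ of classes of order dividing $4$ (split primes by the relation ${\mathcal Q}_2{\mathcal Q}_\ell^2=(x+\sqrt{-d})$, ramified primes trivially), one gets ${\mathcal Cl}_{\mathbb K}=H_4$ of order dividing $4^2=16$ directly, with the case $d=2p$ handled separately by Proposition \ref{propd=2p}. I recommend you import the parity-adapted Minkowski lemma and the $x=2p$ trick; the rest of your outline then matches the paper.
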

}}

\noindent\frame{\vbox{
\begin{theorem}\label{mainthd=2mod4notsquarefree}
Let $d\equiv 2\pmod 4$ be a positive not square-free integer. 
Then $M_{even}(d)\leq 2$ if and only if $d=18$.
\end{theorem}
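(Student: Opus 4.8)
The plan is to reduce the condition $M_{even}(d)\le 2$ to an elementary statement about prime powers and then to eliminate every non-square-free $d\equiv 2\pmod 4$ except $18$ by exhibiting a single even $x\le\sqrt d$ with $\omega(d+x^2)\ge 3$. Writing $d=2m$ with $m$ odd, non-square-freeness forces an odd prime $p$ with $p^{2}\mid d$; in particular the smallest such $d$ is $18$, so I may assume $d\ge 18$. Since $x$ is even, $x^{2}\equiv 0\pmod 4$, hence $d+x^{2}\equiv 2\pmod 4$ and $\omega(d+x^{2})=1+\omega\bigl((d+x^{2})/2\bigr)$ with $(d+x^{2})/2$ odd. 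Thus $M_{even}(d)\le 2$ holds if and only if $(d+x^{2})/2$ is a power of a single odd prime for every even $x$ with $2\le x\le\sqrt d$. The value $d=18$ is settled at once, since $x=2,4$ give $22=2\cdot 11$ and $34=2\cdot 17$.

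The engine behind every remaining case is the same: I pick two even values of $x$ for which the two quotients $(d+x^{2})/2$ are each forced to be a power of one fixed prime $q$, and I read off that these two powers of $q$ must differ by a small fixed amount; for $q=3$ this pins the pair down to $\{3,9\}$, and for $q\ge 5$ it is outright impossible. First, if $d\equiv 2\pmod 3$ then $3\mid(d+4)$ and $3\mid(d+16)$, so $(d+4)/2$ and $(d+16)/2$ are powers of $3$ differing by $6$, forcing $d=2$ — excluded. Applying the identical idea to $x=6,12$ disposes of the case $9\mid d$: writing $d=18k'$, the quotients are $9(k'+2)$ and $9(k'+8)$, so $k'+2$ and $k'+8$ are powers of $3$ differing by $6$, forcing $k'=1$, i.e. $d=18$, which contradicts $x=12\le\sqrt d$ (needing $d\ge 144$); the finitely many $d<144$ with $9\mid d$, namely $54,90,126$ besides $18$, are checked by hand. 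This kills all $d\equiv 2\pmod 3$ and all $d$ divisible by $9$.

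For the rest I have $d\not\equiv 2\pmod 3$ and $9\nmid d$, so the square factor satisfies $p\ge 5$; writing $d=2p^{2}k$ with $k$ odd one checks $k\ge 3$ here (as $k=1$ would give $d=2p^{2}\equiv 2\pmod 3$). Now $x=2p\le\sqrt d$ yields $d+(2p)^{2}=2p^{2}(k+2)$, whose number of distinct prime factors is at least $3$ unless $k+2$ is a power of $p$. In that sole escape case I use the next even multiple $x=4p$, legitimate once $k\ge 9$, giving $2p^{2}(k+8)$: were both $k+2$ and $k+8$ powers of $p$, their difference $6$ would be divisible by $p\ge 5$, which is absurd. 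Only $k\in\{3,5\}$ escape $x=4p$, corresponding to $d\in\{150,490\}$, and both are eliminated by inspecting $x=2$.

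The main obstacle is not any single estimate but the bookkeeping that makes these branches exhaust all cases cleanly: one must verify that $\{d\equiv 2\pmod 3\}$, $\{9\mid d\}$, and $\{p\ge 5,\ 9\nmid d\}$ cover every possibility, that $k=1$ always lands in the first branch or equals $18$, and that the short explicit lists $\{54,90,126\}$ and $\{150,490\}$ are treated correctly so that $d=18$ is the unique survivor. Once this partition is in place, every branch closes by the same mechanism — two values of $x$ forcing two powers of one prime that differ by $6$ — so, in contrast with the square-free Theorem~\ref{mainthd=2mod4}, no class-number data and no analytic input are required.
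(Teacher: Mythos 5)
Your proposal is correct, and it is worth recording where it departs from the paper. The paper proves the theorem in two steps: a first lemma reduces to $d=2p^2$ by writing $d=2p^2d'$ with $p\nmid d'$ and playing $x=2p$ against $x=4p$ (forcing $d'+2$ and $d'+8$ to be powers of $p$ differing by $6$), and a second lemma disposes of $d=2p^2$ via the congruence $x_p\equiv 4p\pmod{33}$, which yields $66\mid d+(2x_p)^2$ once $p\geq 25$, leaving the eight primes $3\leq p<25$ to an unexplained numerical verification. Your branch on $p\geq 5$ with $k\geq 3$ is essentially the paper's first lemma (same pair $x=2p,4p$, same difference-$6$ mechanism, same residual list $\{150,490\}$ killed by $x=2$; note your case $k=7$, i.e. the paper's $d=126$, is already excluded at $x=2p$ since $k+2=9$ is not a power of $p\geq 5$). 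The genuine novelty is your treatment of the residue classes mod $3$: observing that $d=2p^2$ with $p\geq 5$ satisfies $d\equiv 2\pmod 3$, so that $x=2$ and $x=4$ force $(d+4)/2$ and $(d+16)/2$ to be powers of $3$ differing by $6$ — impossible for $d\geq 18$ — you eliminate the entire $d=2p^2$ family with no computation beyond $d=18$ itself, and the companion branch $9\mid d$ (with $x=6,12$ and the short list $\{54,90,126\}$) is equally clean. This buys a fully self-contained elementary proof with no residual finite search over primes up to $25$; what it gives up is the paper's mod-$33$ device, which the author reuses as a template elsewhere (in Lemma \ref{step4} and in the real quadratic analogue). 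Your bookkeeping is sound: the three branches $d\equiv 2\pmod 3$, $9\mid d$, and ($9\nmid d$ with a square factor $p\geq 5$) do exhaust all non-square-free $d\equiv 2\pmod 4$, the inequalities $x\leq\sqrt d$ hold where you invoke them ($d\geq 18>16$ for $x=4$, $d\geq 162$ for $x=12$, $k\geq 9$ for $x=4p$), and the exponents in the ``two prime powers differing by $6$'' step are always at least $1$, so the spurious solution $7-1=6$ never intervenes.
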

}}

In contrast with Theorem \ref{mainthd=2mod4}, 
we point out that the determination of all the square-free positive integers $d\equiv 2\pmod 4$ 
for which $M_{odd}(d)\leq 2$ is not yet done. 
According to A. Gica in \cite{GicaIndMath} there are conjecturally $202$ such $d$'s: 

\begin{Conjecture}Let $d\equiv 2\pmod 4$ be a positive square-free integer. 
Then, $M_{odd}(d)\leq 2$ if and only if 
$d\in\{2,$ $6,$ $10,$ $14,$ $22,$ $26,$ $30,$ $34,$ $38,$ $42,$ 
$46,$ $58,$ $62,$ $66,$ $70,$ $74,$ $78,$ $82,$ $86,$ $94,$ 
$102,$ $106,$ $110,$ $118,$ $122,$ $130,$ $138,$ $142,$ 154,$ 158,$ 
$166,$ $178,$ $190,$ $202,$ $210,$ $214,$ $218,$ $226,$ $238,$ $262,$ 
$274,$ $282,$ $298,$ $302,$ $310,$ $322,$ $346,$ $358,$ $366,$ $382,$ 
$394,$ $418,$ $422,$ $442,$ $466,$ $478,$ $498,$ $502,$ $518,$ $526,$ 
$538,$ $562,$ $598,$ $610,$ $622,$ $658,$ $682,$ $694,$ $718,$ $730,$ 
$742,$ $754,$ $778,$ $802,$ $826,$ $858,$ $862,$ $898,$ $958,$ $982,$ 
$1030,$ $1090,$ $1138,$ $1162,$ $1198,$ $1222,$ $1282,$ $1318,$ $1366,$ $1402,$ 
$1558,$ $1582,$ $1618,$ $1642,$ $1738,$ $1822,$ $1870,$ $1918,$ $1978,$ $2002,$ 
$2038,$ $2062,$ $2158,$ $2182,$ $2242,$ $2302,$ $2398,$ $2458,$ $2482,$ $2542,$ 
$2578,$ $2818,$ $2878,$ $2902,$ $2962,$ $2998,$ $3298,$ $3322,$ $3382,$ $3502,$ 
$3658,$ $3802,$ $3958,$ $4162,$ $4222,$ $4258,$ $4558,$ $4678,$ $4918,$ $5098,$ 
$5182,$ $5338,$ $5602,$ $5758,$ $5842,$ $6238,$ $6262,$ $6598,$ $6658,$ $6742,$ 
$6862,$ $7078,$ $7282,$ $7522,$ $8002,$ $8338,$ $8782,$ $9262,$ $9718,$ $10138,$ 
$10822,$ $10858,$ $11278,$ $11302,$ $12142,$ $12202,$ $12538,$ $12742,$ $13798,$ $13918,$ 
$14422,$ $14722,$ $15082,$ $15178,$ $16102,$ $17158,$ $18202,$ $18418,$ $19462,$ $21058,$ 
$23398,$ $23662,$ $24082,$ $25162,$ $25642,$ $26398,$ $27358,$ $28582,$ $29362,$ $30178,$ 
$30622,$ $31882,$ $32362,$ $33742,$ $34318,$ $35722,$ $38578,$ $41218,$ $45742,$ $47338,$ 
$48742,$ $61462,$ $62302,$ $83218,$ $85402,$ $92698,$ $92878,$ $94378,$ $102958,$ $166798,$ 
$225142,$ $288502\}$ 
($202$ values).
\end{Conjecture}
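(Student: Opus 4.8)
The plan is to transcribe the condition $M_{odd}(d)\le 2$ into the ring $\mathcal O_{\mathbb K}={\mathbb Z}[\sqrt{-d}]$ of ${\mathbb K}={\mathbb Q}(\sqrt{-d})$ and then bound $d$ analytically, following the architecture of the proofs of Theorems \ref{mainthimpair7mod8} and \ref{mainthd=2mod4}, while isolating the one feature that makes the present case genuinely harder. For odd $x$ with $1\le x\le\sqrt d$ one has $d+x^2\equiv 3$ or $7\pmod 8$, so $d+x^2$ is odd and the principal ideal $(x+\sqrt{-d})$, of odd norm $d+x^2\in(d,2d]$, is coprime to the ramified prime above $2$. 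Every odd prime dividing $d+x^2$ splits or ramifies in ${\mathbb K}$, and $M_{odd}(d)\le 2$ says precisely that each such $d+x^2$ is a product of at most two prime powers of such primes; writing $(x+\sqrt{-d})=\mathfrak p^{a}\mathfrak q^{b}$ forces the class relation $[\mathfrak p]^{a}[\mathfrak q]^{b}=1$ among the classes of small split primes.

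Here lies the decisive contrast with the even-$x$ case of Theorem \ref{mainthd=2mod4} and the $d\equiv 7\pmod 8$ case of Theorem \ref{mainthimpair7mod8}. In those settings the prime $2$ (ramified, respectively split) divides every relevant form value and thereby anchors the class group: its ideal class generates a cyclic group of small $2$-power order, forcing $h_{\mathbb K}$ to be small and permitting one to finish from the tables of imaginary quadratic fields of bounded class number as in \cite{Watkins}. For odd $x$ and $d\equiv 2\pmod 4$ the values $d+x^2$ are odd, so no anchoring prime exists: both prime factors are ``free'' split primes, and the only conclusion is that the classes of the small split primes satisfy relations, not that $h_{\mathbb K}$ is small. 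This is why large class numbers are allowed, consistent with a list as long as $202$ values, and why a reduction to a finite family of fields is unavailable; the finiteness of the list must be extracted directly rather than through a class-number bound.

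To obtain finiteness together with an effective cut-off I would argue as follows. Choose the three smallest odd primes $p_1<p_2<p_3$, coprime to $d$, that split in ${\mathbb K}$; for each there is a residue $r_i$ with $r_i^{2}\equiv -d\pmod{p_i}$. By the Chinese Remainder Theorem there is an odd $x$ in every interval of length $2p_1p_2p_3$ with $x\equiv r_i\pmod{p_i}$ for $i=1,2,3$, whence $p_1p_2p_3\mid d+x^2$. As soon as $2p_1p_2p_3\le\sqrt d$ such an $x$ lies in $[1,\sqrt d]$, and then $d+x^2$ has the three distinct prime factors $p_1,p_2,p_3$, so $\omega(d+x^2)\ge 3$ and $M_{odd}(d)>2$. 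Thus $M_{odd}(d)\le 2$ forces $2p_1p_2p_3>\sqrt d$: there can be no three small split primes. But the absence of small split primes means $\chi_{-4d}(p)\in\{-1,0\}$ for all small $p$, which makes the Euler product for $L(1,\chi_{-4d})$ small; via the class number formula $h_{\mathbb K}=\frac{2\sqrt d}{\pi}L(1,\chi_{-4d})$ this contradicts the Siegel--Tatuzawa lower bound $L(1,\chi_{-4d})\gg_{\varepsilon}d^{-\varepsilon}$ once $d$ exceeds an effective $D_0$, valid for all but at most one $d$. The Restricted Riemann Hypothesis assumed in Theorem \ref{mainthimpair7mod8} removes the possible exception. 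This is the same analytic apparatus already deployed there.

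The main obstacle is quantitative rather than structural. The argument only forces $2p_1p_2p_3>\sqrt d$, and controlling the three smallest split primes amounts to bounding the least split prime (equivalently, a least quadratic non-residue pattern), which is exactly where the weakness of the known effective bounds bites. Under the Restricted Riemann Hypothesis the least split prime is $O((\log d)^2)$, whence $2p_1p_2p_3=O((\log d)^6)$ and the cut-off $D_0$ is roughly the solution of $(\log d)^{6}\asymp\sqrt d$, of size about $e^{47}$. Such a $D_0$ is far too large for a direct search over all odd $x\le\sqrt d$, so the genuinely hard remaining task is to sharpen the elementary step, either by exploiting the values $d+x^2$ at many small odd $x$ simultaneously, or by using the class relations of the first paragraph to manufacture a third prime factor out of only one or two small split primes, so as to bring $D_0$ into a feasible computational range. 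Only then could the explicit list of $202$ values be confirmed by a finite check, which is why, pending such a sharpening, the statement must remain a conjecture.
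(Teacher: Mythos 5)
This statement is presented in the paper as a \emph{Conjecture}, supported only by numerical computation; the paper explicitly remarks that for odd $x$ and square-free $d\equiv 2\pmod 4$ ``the determination \dots is not yet done.'' So there is no proof in the paper to compare against, and your diagnosis of why the case resists the paper's methods is essentially the correct one: for odd $x$ the values $d+x^2\equiv 3,7\pmod 8$ are odd, so the ramified prime above $2$ never divides $(x+\sqrt{-d})$, there is no anchoring ideal class of small $2$-power order, and consequently no analogue of the bounds $h_{\mathbb K}\mid 16$ (Theorem \ref{mainthd=2mod4}) or $h_{\mathbb K}\leq(\log 2d)/\log 2$ (Propositions \ref{d=pq=7mod8} and \ref{d=p=7mod8}) is available. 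Your honest conclusion that the statement must remain a conjecture agrees with the paper, and your Chinese-Remainder observation --- that $M_{odd}(d)\leq 2$ forces $2p_1p_2p_3>\sqrt d$ for the three smallest split primes --- is a correct and genuine partial result.

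However, the analytic step you propose to extract finiteness from it has a real gap. From $2p_1p_2p_3>\sqrt d$ you only learn that all primes $p\leq(\sqrt d/2)^{1/3}$, with at most two exceptions, satisfy $\chi_{-4d}(p)\in\{-1,0\}$. Even granting full control of the Euler product (which you do not have: the factors at primes in $(d^{1/6},d]$ are unconstrained, and Euler-product truncation at height $d^{1/6}$ is not rigorous below the P\'olya--Vinogradov range), this yields at best $L(1,\chi_{-4d})\ll 1/\log d$. That is \emph{not} in contradiction with the Siegel--Tatuzawa bound $L(1,\chi_{-4d})\geq c(\varepsilon)d^{-\varepsilon}$, since $1/\log d$ exceeds $d^{-\varepsilon}$ for all large $d$; Tatuzawa's theorem can only close an argument that first produces a power saving, as in the paper where $h_{\mathbb K}\ll\log d$ gives $L(1,\chi)\ll(\log d)/\sqrt d$ via the class number formula. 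So your claim that ``the same analytic apparatus already deployed there'' yields an effective $D_0$ with at most one exceptional $d$, and that the restricted hypothesis $\zeta_{\mathbb K}(1-2/\log d_{\mathbb K})\leq 0$ removes it, does not follow: unconditionally your mechanism proves no finiteness statement at all. (Under GRH your argument does bound $d$, since the least split prime is then $O((\log d)^2)$ and $\sqrt d<2p_1p_2p_3\ll(\log d)^6$ forces $d$ bounded --- but that is a different, conditional argument, and as you note the resulting cut-off is computationally useless.) The correct summary is therefore weaker than what your third paragraph asserts: even with Siegel--Tatuzawa, and even allowing one exceptional modulus, the conjecture is open, which is precisely why the paper states it without proof.
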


Bearing on extended numerical computations, we also conjecture the following:

\begin{Conjecture}
Let $d\equiv 2\pmod 4$ be a positive not square-free integer. 
Then, $M_{odd}(d)\leq 2$ if and only if 
$d\in\{18,$ $50,$ $54,$ $90,$ $98,$ $126,$ $162,$ $198,$ 
$242,$ $250,$ $294,$ $342,$ $378,$ $450,$ $522,$ $550,$ $558,$ $702,$ 
$722,$ $850,$ $882,$ $918,$ $1078,$ $1150,$ $1422,$ $1450,$ $2662,$ $2842,$ 
$3250,$ $3798,$ $4018,$ $4698,$ $4750,$ $5350,$ $7018,$ $9802,$ $11650,$ $12838,$ $16762,$ $17182,$ 
$20938,$ $23998,$ $30682,$ $48778,\}$ 
($44$ values).
\end{Conjecture}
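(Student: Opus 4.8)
The plan is to imitate the strategy behind Theorem \ref{mainthd=2mod4}, but carried out in the non-maximal order $\mathcal{O}=\mathbb{Z}[\sqrt{-d}]$ rather than in the maximal order. Write $d=2f^2m_0$ with $m_0$ odd and square-free, so that $\mathbb{K}=\mathbb{Q}(\sqrt{-d})=\mathbb{Q}(\sqrt{-2m_0})$ has discriminant $d_{\mathbb{K}}=-8m_0$, while $\mathcal{O}$ is the order of conductor $f>1$ (necessarily odd) and discriminant $-4d=f^2 d_{\mathbb{K}}$. For odd $x$ with $1\le x\le\sqrt d$ the integer $d+x^2$ is the value of the principal norm form of $\mathcal{O}$ at $x+\sqrt{-d}$ and lies in $(d,2d]$, so $\omega(d+x^2)\le 2$ is a statement about how the ideal $(x+\sqrt{-d})$ factors in $\mathcal{O}$. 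First I would establish the analogue of the key lemma underlying Theorem \ref{mainthd=2mod4}: if $M_{odd}(d)\le 2$, then every prime of small norm that splits in $\mathcal{O}$ sits in a severely restricted part of the form class group of discriminant $-4d$, forcing the order class number $h(\mathcal{O})$ to divide a fixed power of $2$ (I expect $16$, as in the square-free case).

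Second, I would convert this bound on $h(\mathcal{O})$ into a bound on $d$. Using the class number formula for orders,
$$h(\mathcal{O})=\frac{f\,h_{\mathbb{K}}}{[\mathcal{O}_{\mathbb{K}}^\times:\mathcal{O}^\times]}\prod_{p\mid f}\Bigl(1-\frac{\chi_{d_{\mathbb{K}}}(p)}{p}\Bigr),$$
the constraint $h(\mathcal{O})\mid 16$ simultaneously bounds the conductor $f$ and the field class number $h_{\mathbb{K}}$, since $f\prod_{p\mid f}\bigl(1-\chi_{d_{\mathbb{K}}}(p)/p\bigr)\ge\varphi(f)$ tends to infinity with $f$. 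A Siegel-Tatuzawa type lower bound for $L(1,\chi_{d_{\mathbb{K}}})$, exactly as invoked in Theorem \ref{mainthimpair7mod8}, then forces $|d_{\mathbb{K}}|=8m_0$ to be bounded with at most one exceptional field; combined with the bound on $f$ this confines $d=2f^2m_0$ to an explicit finite range. Third, within that range I would run the finite verification: enumerate the admissible pairs $(m_0,f)$, compute $h(\mathcal{O})$ from the formula above, discard those with $h(\mathcal{O})\nmid 16$, and for each survivor test $\omega(d+x^2)\le 2$ directly over the odd $x\le\sqrt d$, recovering the $44$ listed values. The converse direction is the same finite check applied to the listed $d$.

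The hard part will be twofold. The first difficulty is the presence of primes $p\mid f$: such primes are singular in $\mathcal{O}$, the prime ideals above them failing to be invertible, so the clean dictionary between small split primes and values of the principal form that powers the square-free argument does not apply to them, and their contribution to the factorizations $d+x^2$ must be tracked by hand. The second, and I believe decisive, obstacle is effectivity: the Siegel-Tatuzawa theorem leaves one possibly exceptional field, and for non-maximal orders the product over $p\mid f$ can inflate $h(\mathcal{O})$ relative to $h_{\mathbb{K}}$, so ruling out large $d$ unconditionally—rather than under the Restricted Riemann Hypothesis used in Theorem \ref{mainthimpair7mod8}—appears to lie beyond current tools. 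This is precisely why the statement is offered here only as a conjecture supported by extended computation.
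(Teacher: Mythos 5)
The statement you set out to prove is presented in the paper only as a conjecture, explicitly ``bearing on extended numerical computations''; the paper contains no proof of it, so the real question is whether your program could close the gap, and it cannot as written. The fatal flaw is at your first step. You propose to show that $M_{odd}(d)\leq 2$ forces $h(\mathcal{O})$ to divide a fixed power of $2$ ``as in the square-free case,'' modelling this on Theorem \ref{mainthd=2mod4}. But Theorem \ref{mainthd=2mod4} concerns $M_{even}$, and its engine is a parity phenomenon that disappears here: for \emph{even} $x$ and $d\equiv 2\pmod 4$ one has $d+x^2\equiv 2\pmod 4$, so for an odd split prime $\ell\mid d+x^2$ the condition $\omega(d+x^2)\leq 2$ pins down $d+x^2=2\ell^a$, and Lemma \ref{d=2mod4} then forces $a=2$, which is exactly what places $[\mathcal{Q}_\ell]$ in the subgroup $H_4$ of classes of order dividing $4$. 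In your setting the hypothesis bears on \emph{odd} $x$, and then $d+x^2\equiv 3\pmod 4$ is odd: the constraint $\omega(d+x^2)\leq 2$ now permits $d+x^2=\ell^a q^b$ with two unrelated odd primes, the distinguished ramified ideal above $2$ never enters the factorization of $(x+\sqrt{-d})$, and no confinement of $[\mathcal{Q}_\ell]$ to a subgroup of bounded order follows. This is not a removable technicality: the paper itself records that the square-free version of the $M_{odd}$ problem for $d\equiv 2\pmod 4$ is open (the preceding conjecture with $202$ values), so the base case your conductor argument is meant to generalize is itself unproven, and your obstruction appears already at $f=1$.

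Without that first step there is no bound on $h(\mathcal{O})$, hence your class-number formula for orders and the Siegel--Tatuzawa input (which would otherwise correctly bound $h_{\mathbb{K}}$ and $f$ simultaneously, up to one exceptional modulus, as in Theorem \ref{mainthimpair7mod8}) never produce a finite range of $d$, and the concluding ``finite verification recovering the $44$ values'' has no finite set to run over. To your credit, your last paragraph honestly flags two genuine secondary obstacles --- the non-invertible primes dividing the conductor, and ineffectivity with a possible exceptional field --- and you correctly sense that the statement deserves to remain a conjecture; but the decisive obstruction is the $M_{odd}$ versus $M_{even}$ parity mismatch above, which your proposal does not detect. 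The only portion provable today is the ``if'' direction, a direct finite check of $\omega(d+x^2)\leq 2$ over odd $x\leq\sqrt d$ for the $44$ listed values.
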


{\bf Fifth} and finally, to consider the same problems but for real quadratic number fields, we set 
$$M_{even}'(d)
:=\max\{\omega(d-x^2);\text{ $2\leq x\leq\sqrt d$ and $x$ even}\}.$$ 
For $0<d\equiv 5\pmod 8$, Frobenius-Rabinowitsch's Theorem for real quadratic fields 
asserts that 
$(d-x^2)/4$ is prime or equal to $1$ for all the odd integers $x$ in the range $3\leq x\leq\sqrt{d}$ 
if and only if $d$ is square-free of the form $d=m^2\pm 4$ or $4m^2+1$ 
and the class number $h_d$ of the real quadratic number field ${\mathbb Q}(\sqrt{d})$ 
is equal to one 
(see \cite[Theorem 2]{LouCJM42}. 
The fundamental units $\epsilon_d =(m+\sqrt{d})/2$ or $2m+\sqrt{d}$ 
of these families of real quadratic fields 
are small, 
namely $\log\epsilon_d \ll\log d$. 
It follows from the Brauer-Siegel theorem that there are only finitely many such $d$'s 
with $h_d =1$. 
The only known such $d$'s were 
$d=13$, $21$, $29$, $37$, $53$, $77$, $101$, $173$, $197$, $293$, $437$ and $677$ 
($12$ values).
Following the seminal work of A. Bir\'o in \cite{Biro1} for the case $d=m^2+4$, 
it follows from \cite{Biro2}, \cite{BKL} and \cite{BG} that this list is complete.
With a proof similar to that of Theorem \ref{mainthd=2mod4}, 
we have:

\noindent\frame{\vbox{
\begin{theorem}\label{mainthd=2mod4reel}
Let $d\equiv 2\pmod 4$ be a positive square-free integer. 
Set ${\mathbb K}={\mathbb Q}(\sqrt{d})$, a real quadratic number field 
of discriminant $d_{\mathbb K} =4d$ and ring of algebraic integers ${\mathbb Z}[\sqrt{d}]$.
Assume that $M_{even}'(d)\leq 2$. 
Then the class number of ${\mathbb K}$ divides $16$. 
\end{theorem}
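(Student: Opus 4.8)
The plan is to follow the proof of Theorem \ref{mainthd=2mod4} essentially line by line, replacing the imaginary field $\mathbb{Q}(\sqrt{-d})$ of discriminant $-4d$ by $\mathbb{K}=\mathbb{Q}(\sqrt d)$ of discriminant $4d$, and replacing the norm form $x^2+dy^2$ by $|x^2-dy^2|$. The starting identity is that for an even integer $x$ with $0\le x<\sqrt d$ one has $d-x^2=|N_{\mathbb{K}/\mathbb{Q}}(x+\sqrt d)|$, so that the hypothesis $M_{even}'(d)\le 2$ controls the prime-ideal factorization of the principal ideal $(x+\sqrt d)$. Since $d\equiv 2\pmod 4$ the rational prime $2$ ramifies, $(2)=\mathfrak{p}_2^2$ with $\mathfrak{p}_2=(2,\sqrt d)$ of norm $2$ and $[\mathfrak{p}_2]^2=1$; and for even $x$ one has $d-x^2\equiv 2\pmod 4$, so that $\mathfrak{p}_2$ divides $(x+\sqrt d)$ exactly once and $(x+\sqrt d)=\mathfrak{p}_2\mathfrak{b}$ with $N\mathfrak{b}=(d-x^2)/2$ odd. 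Thus $M_{even}'(d)\le 2$ forces $(d-x^2)/2$ to be $1$ or a power $p^a$ of a single odd prime $p$, whence $\mathfrak{b}$ is supported on the prime(s) above $p$ and $[\mathfrak{b}]=[\mathfrak{p}_2]^{-1}=[\mathfrak{p}_2]$ in the class group.

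The arithmetic content then rests, as in the imaginary case, on reduction theory, the only structural difference being that for a real field the reduced ideals of a given class form a \emph{cycle} governed by the continued fraction expansion of $\sqrt d$ rather than a single Gauss-reduced form. Writing the complete quotients of $\sqrt d$ as $(P_i+\sqrt d)/Q_i$ with $0\le P_i<\sqrt d$ and $0<Q_i<2\sqrt d$, one has the identity $d-P_{i+1}^2=Q_iQ_{i+1}$, and the ideals $\mathfrak{a}_i=Q_i\mathbb{Z}+(P_i+\sqrt d)\mathbb{Z}$ run through the reduced ideals of the principal class, the reduced ideals of the remaining classes being organized in analogous cycles; every ideal class of $\mathbb{K}$ contains a reduced ideal of norm $<2\sqrt d$. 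First I would show that whenever the distinguished residue $P$ attached to a reduced ideal is even, the corresponding value $d-P^2$ equals $2p^a$ by the hypothesis, so that the norm of every such reduced ideal lies in $\{1,2,p^a,2p^a\}$. From this one reads off that the class of every reduced ideal is a power of $[\mathfrak{p}_2]$ times the class of a prime $\mathfrak{p}$ above a single odd prime with $[\mathfrak{p}]^a=[\mathfrak{p}_2]$, and then, using genus theory for $d_{\mathbb{K}}=4d$ to bound the $2$-rank together with the relations coming from $[\mathfrak{p}_2]^2=1$ to bound the exponent, one concludes that the class group is a $2$-group of order dividing $16$, exactly as in Theorem \ref{mainthd=2mod4}.

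The main obstacle, and the reason this is genuinely harder than the imaginary case, is twofold. First, the passage from the unique reduced representative of an imaginary class to the cyclic family of reduced ideals of a real class lets the infinite unit group and the regulator intervene, so one must verify that the constraint from $M_{even}'(d)$ reaches every class and not only the principal one, i.e.\ that each nonprincipal cycle also contains a reduced ideal whose distinguished residue is even and lies in the admissible range $2\le x\le\sqrt d$, and that the fundamental unit $\epsilon_d$ does not push a reduced ideal outside the range where its norm is controlled by a value $d-x^2$ with $x$ even. Second, the relation $[\mathfrak{p}]^a=[\mathfrak{p}_2]$ alone does \emph{not} force the order of $[\mathfrak{p}]$ to be a power of $2$, so the heart of the argument is to rule out any class of odd prime order $\ell$: from such a class one must exhibit a reduced ideal whose even distinguished residue produces a value $d-x^2$ with three distinct prime divisors, contradicting $M_{even}'(d)\le 2$. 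Once these bookkeeping and odd-order-elimination steps are settled in the real setting, the proof of $h_{\mathbb{K}}\mid 16$ proceeds in parallel with that of Theorem \ref{mainthd=2mod4}.
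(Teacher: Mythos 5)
Your setup is sound (the factorization $(x+\sqrt d)=\mathfrak{p}_2\mathfrak{b}$ with $N\mathfrak{b}=(d-x^2)/2=p^a$, the use of genus theory to bound the $2$-rank by $2$), but the proposal stops exactly where the real work begins, and the step you defer is the whole content of the theorem. From $d-x^2=2\ell^a$ you only get $[\mathcal{Q}_\ell]^a=[\mathcal{Q}_2]^{-1}$, hence $[\mathcal{Q}_\ell]^{2a}=1$; with $a$ unbounded this bounds neither the order of $[\mathcal{Q}_\ell]$ by $4$ nor $h_{\mathbb K}$ by $16$ (note that the danger is not only classes of odd order, as you suggest: $a=4$ would already allow $[\mathcal{Q}_\ell]$ of order $8$). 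The paper's proof hinges on a lemma you do not supply: if $\ell\le\sqrt d$ is an odd split prime and $d-x^2=2\ell^a$ with $a\ge 3$, then the reflected value $x'=2\ell-x$ is again even, satisfies $x'\le 2\left((d-4)/2\right)^{1/3}-2\le\sqrt d$, and $d-x'^2\equiv -4\ell x\pmod{\ell^2}$ with $\ell\nmid x$, so $M_{even}'(d)\le 2$ forces $d-x'^2=2\ell$, which is then shown to pin down $d=\ell^2+1$ and $x'=\ell-1$. The upshot is that one always has $d-x^2=2\ell^2$ or $d-x^2=2\ell$, i.e.\ $a\le 2$, so every $[\mathcal{Q}_\ell]$ lies in the subgroup $H_4$ of classes of order dividing $4$, and then $h_{\mathbb K}=|H_4|$ divides $4^2=16$. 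Without this reflection argument (or a substitute), the conclusion does not follow.

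A secondary point: the continued-fraction bookkeeping you describe as the first obstacle (locating, in every cycle of reduced ideals, one whose distinguished residue is even and admissible) is not needed and is not how the paper proceeds. The paper only uses that every ideal class contains an ideal whose odd part has norm at most $\sqrt d$, so that $\mathcal{Cl}_{\mathbb K}$ is generated by $[\mathcal{Q}_2]$, the classes of the ramified odd primes (each of order $\le 2$), and the classes $[\mathcal{Q}_\ell]$ for split odd primes $\ell\le\sqrt d$; for each such $\ell$ an even $x$ with $2\le x\le\ell\le\sqrt d$ and $\ell\mid d-x^2$ is produced directly from the Legendre symbol condition (the real analogue of Lemma \ref{lnotinert}), prime by prime, with no reference to which class a reduced ideal lands in. Restructuring your argument around that generation statement, and then proving the $a\le 2$ lemma, would close the gap.
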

}}

We could not prove that $M_{even}'(d)\leq 2$ implies that the fundamental unit $\epsilon_d>1$ 
is small.
Hence we cannot use the Brauer-Siegel theorem to deduce that there are only finitely many such $d$'s. 
However, easy numerical computations suggest that we have:

\noindent\frame{\vbox{
\begin{Conjecture}
Let $d\equiv 2\pmod 4$ be a positive square-free integer. 
Then $M_{even}'(d)\leq 2$ even if and only if 
$d\in\{2,$ $6,$ $10,$ $14,$ $22,$ $26$, $30,$ $38,$ $42$, $62,$ $110,$ $122,$ $182,$ $278$, $362$, $398\}$ 
($16$ values). 
\end{Conjecture}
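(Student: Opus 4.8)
The plan is to imitate the imaginary-field arguments of Theorems~\ref{mainthimpair7mod8} and~\ref{mainthd=2mod4}, the one decisive difference being the behaviour of the regulator. I would start from the norm interpretation: for $d\equiv 2\pmod 4$ square-free and $2\le x\le\sqrt d$ even, the algebraic integer $x+\sqrt d\in{\mathbb Z}[\sqrt d]$ has norm $N(x+\sqrt d)=x^2-d<0$, so $d-x^2=\lvert N(x+\sqrt d)\rvert$. Since $x$ is even and $d\equiv 2\pmod 4$, we have $d-x^2\equiv 2\pmod 4$, so $2$ divides $d-x^2$ exactly once; hence $M_{even}'(d)\le 2$ is equivalent to requiring that the odd part of each $d-x^2$ be a prime power. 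Ideal-theoretically, writing $\mathfrak{p}_2=(2,\sqrt d)$ for the ramified prime above $2$ (so $\mathfrak{p}_2^2=(2)$, $N\mathfrak{p}_2=2$), the factorization becomes $(x+\sqrt d)=\mathfrak{p}_2\,\mathfrak{q}^{a}$ for a single odd prime ideal $\mathfrak{q}$, and principality of $(x+\sqrt d)$ yields the relation $[\mathfrak{p}_2]=[\mathfrak{q}]^{-a}$ in the class group. I would first invoke Theorem~\ref{mainthd=2mod4reel} to get $h_d\mid 16$, and then let $x$ range to conclude, as in Theorem~\ref{mainthimpair7mod8}, that every class of a small odd split prime is a power of $[\mathfrak{p}_2]$, so the class group is cyclic generated by $[\mathfrak{p}_2]$.

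The easy half of the asserted equivalence is then a finite verification: for each of the sixteen listed values of $d$ one checks directly that $\omega(d-x^2)\le 2$ for every even $x\le\sqrt d$. This is routine, if lengthy, and poses no conceptual difficulty.

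The hard half is completeness, i.e. ruling out any further $d$, and here the real-field situation diverges sharply from the imaginary one. In the imaginary case the bound $h\mid 16$ already confines $d$ to a finite, effectively computable range, because only finitely many imaginary quadratic fields share a given class number, with at most one exception controlled by Siegel--Tatuzawa exactly as in Theorem~\ref{mainthimpair7mod8}. For real fields the analytic class number formula reads $2h_d\log\epsilon_d=\sqrt{d_{\mathbb K}}\,L(1,\chi_{d_{\mathbb K}})$, so with $h_d$ bounded and $L(1,\chi)$ bounded below the regulator $\log\epsilon_d$ grows like $\sqrt d$; a bounded class number therefore does \emph{not} bound $d$. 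To recover finiteness one would need to know that $M_{even}'(d)\le 2$ forces a small fundamental unit, say $\log\epsilon_d\ll\log d$, which is precisely the regime in which the Brauer--Siegel theorem leaves only finitely many $d$ and which makes the genuine Frobenius--Rabinowitsch families $d=m^2\pm 4$, $4m^2+1$ tractable.

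The main obstacle is thus exactly the one flagged in the paragraph preceding the statement: there is no visible mechanism by which the arithmetic constraint $\omega(d-x^2)\le 2$ on the norms $d-x^2$ should control the size of $\epsilon_d$. Unlike the primality hypothesis of the classical theorem, which pins $d$ inside the small-regulator families, the weaker condition $\omega\le 2$ appears compatible with arbitrarily large regulators, and absent such a bound Brauer--Siegel is unavailable; one is further hampered by the ineffectivity of lower bounds for $L(1,\chi)$ for real quadratic characters (the Siegel-zero problem). For these reasons I expect the structural conclusions (cyclicity of the class group and $h_d\mid 16$) together with the easy direction to be provable along the above lines, whereas the finiteness---and hence the exactness of the list of sixteen values---will remain out of reach, which is why the statement is correctly presented only as a conjecture.
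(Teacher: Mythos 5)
This statement is presented in the paper only as a conjecture, with no proof beyond numerical evidence, and your analysis matches the paper's own discussion exactly: the structural bound $h_{\mathbb K}\mid 16$ is Theorem~\ref{mainthd=2mod4reel}, the easy direction is a finite check, and completeness fails for precisely the reason you give, namely that $M_{even}'(d)\leq 2$ is not known to force a small fundamental unit, so the Brauer--Siegel theorem cannot be brought to bear. The only caveat is that your claimed cyclicity of the class group generated by $[\mathfrak{p}_2]$ is stronger than what the paper establishes in the real case (case $(i)$ of the argument for Theorem~\ref{mainthd=2mod4} can yield a non-cyclic group $({\mathbb Z}/2{\mathbb Z})^2$, and in the real case the $2$-rank is only bounded above by $\omega(d_{\mathbb K})-1$); otherwise your assessment coincides with the paper's.
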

}}

At least, we will solve this problem for the case of not square-free integers:

\noindent\frame{\vbox{
\begin{theorem}\label{mainthd=2mod4notsquarefreereel}
Let $d\equiv 2\pmod 4$ be a positive not square-free integer. 
Then $M_{even}'(d)\leq 2$ if and only if 
$d\in\{18,$ $50,$ $54,$ $90$, $98\}$ 
($5$ values). 
\end{theorem}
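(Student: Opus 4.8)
The plan is to mirror the reduction used for Theorem~\ref{mainthd=2mod4notsquarefree} and then exploit the square factor of $d$ to force $d$ to be small. First I would write $d=2m$ with $m=d/2$ odd (and not square-free, since $d$ is not square-free and $4\nmid d$). For an even $x=2y$ one has $d-x^2=2(m-2y^2)$ with $m-2y^2$ odd, so $\omega(d-x^2)=1+\omega(m-2y^2)$ whenever $m-2y^2>1$. Hence $M_{even}'(d)\le 2$ is equivalent to the statement that $m-2y^2$ is $1$ or a prime power for every integer $y$ with $1\le y\le\sqrt{m/2}$. This is a Frobenius--Rabinowitsch-type condition for the norm form $u^2-2v^2$ of $\mathbb{Q}(\sqrt2)$, and it is this condition I would analyse.

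Next I would extract the square factor. Choose a prime $p$ with $p^2\mid m$ and write $m=p^2m_1$ (so $p$ is odd and $m_1$ is odd). Substituting $y=py_1$ gives $m-2(py_1)^2=p^2(m_1-2y_1^2)$; since this is a prime power divisible by $p^2$, it must be a power of $p$, so $m_1-2y_1^2$ is a power of $p$ (possibly $1$) for every $y_1$ with $1\le y_1\le\sqrt{m_1/2}$. The values $m_1-2y_1^2$ are strictly decreasing, hence distinct, and all lie among the at most $1+\log_3 m_1$ powers of $p$ below $m_1$; comparing this with the $\lfloor\sqrt{m_1/2}\rfloor$ admissible values of $y_1$ forces $m_1$ to be either $1$ or bounded by an explicit small constant. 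When $m_1$ is bounded and $m_1\ge 5$, the choice $y_1=1$ shows $p\mid m_1-2$, so $p$ is bounded too and $d=2p^2m_1$ is bounded. The only cases in which this descent leaves $p$ free are $m_1=1$, giving the family $d=2p^2$, and $m_1=3$ (where $m_1-2=1$), giving the family $d=6p^2$.

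The main obstacle is to rule out large $p$ in these two one-parameter families, since here the counting bound of the previous step breaks down: prime powers up to $m$ are far too dense to be exceeded by the roughly $\sqrt{m/2}$ admissible values of $y$. I would instead use a covering-congruence argument. For $d=2p^2$ pick two primes $q_1,q_2\equiv\pm1\pmod 8$ (so that $2$ is a quadratic residue and each $q_i$ splits in $\mathbb{Q}(\sqrt2)$), for instance $q_1=7$ and $q_2=17$; for $p\notin\{q_1,q_2\}$ the congruence $2y^2\equiv p^2\pmod{q_i}$ is solvable, so by the Chinese remainder theorem there is a residue class modulo $q_1q_2$ on which $q_1q_2\mid p^2-2y^2$. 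As soon as $p/\sqrt2>q_1q_2$ this class has a representative $y$ in the range $1\le y\le\sqrt{m/2}$, and then $p^2-2y^2$ is a positive multiple of the two distinct primes $q_1,q_2$, so $\omega(d-x^2)\ge 3$. An entirely analogous argument, with $q_1,q_2$ chosen so that $6$ is a quadratic residue (e.g.\ $q_1=5$, $q_2=19$) and the congruence $2y^2\equiv 3p^2$ taken modulo $q_i$, disposes of $d=6p^2$. The delicate points here are ensuring the auxiliary split primes are coprime to $p$ (handled by treating the finitely many small $p$ separately) and checking that the produced $y$ genuinely lies in range and gives a positive value.

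Finally I would carry out the remaining finite verification: the bounds from the descent and the covering argument leave only finitely many $d$ to test, and a direct computation of $M_{even}'(d)$ confirms that the admissible values are exactly $d=18,50,98$ (the family $d=2p^2$ with $p=3,5,7$), $d=54$ (the family $d=6p^2$ with $p=3$), and $d=90$ (the residual bounded case $m_1=5$, $p=3$), proving the theorem.
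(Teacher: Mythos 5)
Your proposal is correct and follows essentially the same strategy as the paper: reduce to the families $d=2p^2$ and $d=6p^2$ plus finitely many sporadic values (your residual case $m_1=5$, $p=3$ giving $d=90$ matches the paper's case $d'=5$), then eliminate large $p$ in each family by exhibiting an even $x$ in range for which $d-x^2$ is divisible by two fixed odd primes, and finish with a finite computation. The only tactical differences are that the paper pins down the odd cofactor $d'$ of $2p^2$ via the two substitutions $x=2p$ and $x=4p$ (forcing $d'-2=p^a$ and $d'-8=p^b$, hence $6=p^b(p^{a-b}-1)$), which is sharper than your prime-power counting bound but equivalent in outcome, and that it realizes the covering step with a single explicit modulus ($161=7\cdot 23$ for $2p^2$ and $15=3\cdot 5$ for $6p^2$) rather than your CRT construction with two auxiliary split primes; both variants are valid.
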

}}

\section{Proof of Theorem \ref{dnotprimenotpq}}
Theorem \ref{dnotprimenotpq} follows from the following Lemmas \ref{step3}, \ref{step4} and \ref{step5}.

\begin{lemma}\label{step1}
Let $d>1$ be odd. 
Let $p$ be a prime dividing $d$ such that $9p^2\leq d$. 
Assume 
$(i)$ that $p^2$ does not divide $d$, 
or $(ii)$ that $p>3$ and $p^3$ divides $d$ 
or $(iii)$ that $p=3$ and $p^5$ divides $d$. 
Then $M_{odd}(d)\geq 3$.
\end{lemma}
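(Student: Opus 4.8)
The plan is to prove the statement by contradiction: I will assume that $M_{odd}(d)\le 2$ and derive a contradiction, which amounts to exhibiting an odd $x\le\sqrt d$ with $\omega(d+x^2)\ge 3$. The whole argument rests on two elementary observations. First, since $d$ and $x$ are both odd, $d+x^2$ is even, so $2\mid d+x^2$. Second, because $p\mid d$, for any $x$ we have $p\mid d+x^2$ if and only if $p\mid x$; hence if $x$ is an \emph{odd multiple of $p$}, then both $2$ and the odd prime $p\ge 3$ divide $d+x^2$. Consequently, if $\omega(d+x^2)\le 2$ for such an $x$, the only primes dividing $d+x^2$ are $2$ and $p$, which forces $d+x^2=2^a p^b$ with $a,b\ge 1$. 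Moreover the exponent $b=v_p(d+x^2)$ is pinned down by the ultrametric rule $v_p(d+x^2)=\min(v_p(d),2v_p(x))$ whenever these two valuations differ. The strategy is then to evaluate this identity at two suitable test points, subtract, and solve the resulting equation of the shape $2^{a_1}-2^{a_0}=c$.

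For cases $(i)$ and $(ii)$ I would use the two odd multiples $x=p$ and $x=3p$, both of which lie in $[1,\sqrt d]$ precisely because $9p^2\le d$. In case $(i)$ we have $v_p(d)=1<2$, so $b=1$ for both points, giving $d+p^2=2^{a_0}p$ and $d+9p^2=2^{a_1}p$; subtracting yields $2^{a_1}-2^{a_0}=8p$, whose only solution (as $p$ is odd) is $a_0=3$, and then $d=p(8-p)<9p^2$, contradicting $9p^2\le d$. In case $(ii)$ we have $v_p(d)\ge 3>2$, so now $b=2$, giving $2^{a_1}-2^{a_0}=8$, hence $a_0=3,\ a_1=4$ and $d=7p^2$; this forces $v_p(d)=2$ unless $p=7$, contradicting $p^3\mid d$, while $p=7$ gives $d=343<441=9p^2$, again contradicting the hypothesis.

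The delicate case is $(iii)$, where $p=3$, and I expect it to be the main obstacle. Here the two smallest available odd multiples of $3$ in $[1,\sqrt d]$ are $x=3$ and $x=9$ (both guaranteed since $9p^2=81\le d$), but $9=3^2$ raises the $3$-adic valuation, since $v_3(9^2)=4$ rather than $2$. This is exactly why the hypothesis must be strengthened to $3^5\mid d$: it ensures $v_3(d)\ge 5$ still exceeds both $2$ and $4$, so the two valuations are correctly pinned, giving $d+9=2^{a_0}3^2$ and $d+81=2^{a_1}3^4$. Subtracting and dividing by $9$ yields $9\cdot 2^{a_1}-2^{a_0}=8$, whose only solution is $a_1=3,\ a_0=6$, forcing $d=567=3^4\cdot 7$ with $v_3(d)=4<5$, contradicting $3^5\mid d$. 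The genuinely subtle point throughout is the bookkeeping of $p$-adic valuations, namely choosing test points whose valuation is strictly controlled by that of $d$, together with the recognition that $p=3$ behaves differently because its second admissible test point is itself a square of $p$; this is what dictates the three separate hypotheses in the statement.
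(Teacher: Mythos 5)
Your proposal is correct and follows essentially the same route as the paper: the same test points ($x=p$ and $x=3p$ in cases $(i)$ and $(ii)$, $x=3$ and $x=9$ in case $(iii)$), the same pinning of the $p$-adic valuation of $d+x^2$, and the same subtraction trick leading to an equation $2^{a_1}-2^{a_0}=c$ whose unique solution contradicts either $9p^2\leq d$ or the divisibility hypothesis on $d$. The only cosmetic differences are in how the final contradiction is phrased (the paper uses $10p^2\leq d+p^2=8p^2$ directly in case $(ii)$, while you pass through $d=7p^2$), so nothing further is needed.
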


\begin{proof}
Suppose that $M_{odd}(d)\leq 2$ and let us arrive at some contradiction.\\
In case $(i)$, we have $d+p^2=pd'$ and $d+(3p)^2=pd''$, where $d'$ and $d''$ are even and not divisible by $p$. 
Hence $\omega(d+p^2)\leq M_{odd}(d)\leq 2$ and $\omega(d+(3p)^2)\leq M_{odd}(d)\leq 2$ 
give $d+p^2=2^\alpha p$ and $d+(3p)^2=2^\beta p$. 
Therefore $8p=2^\alpha(2^{\beta-\alpha}-1)$, by subtraction, which implies $\alpha =3$ 
and the contradiction $10p^2\leq d+p^2=2^\alpha p=8p$.\\
In case $(ii)$ the same reasoning gives $d+p^2=2^\alpha p^2$ and $d+(3p)^2=2^\beta p^2$. 
Therefore $8=2^\alpha(2^{\beta-\alpha}-1)$, by subtraction, which implies $\alpha =3$ 
and the contradiction $10p^2\leq d+p^2=2^\alpha p^2=8p^2$.\\
In case $(iii)$, since $2\mid d+3^2$, $3^2\parallel d+3^2$ and $\omega(d+3^2)\leq M_{odd}(d)\leq 2$ 
we have $d+3^2=2^\alpha\cdot 3^2$. Using the same arguments, we have $d+3^4=2^\beta\cdot 3^4$. 
Therefore, $8d =3^2(d+3^2)-(d+3^4) =2^\beta(2^{\alpha-\beta}-1)\cdot 3^4$. 
Hence $\beta=3$, $d+3^4=2^\beta\cdot 3^4=2^3\cdot 3^4$, $d=7\cdot 3^4$ 
and we get the contradiction that $d =7\cdot 3^4$ would be divisible by $3^5$.
\end{proof}

\begin{lemma}\label{step2}
Let $d>1$ be odd and not prime. 
If $\omega(d)\geq 3$, then $M_{odd}(d)\geq 3$.
\end{lemma}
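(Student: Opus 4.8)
The plan is to prove the contrapositive: assuming $M_{odd}(d)\le 2$, I will show $\omega(d)\le 2$. Let $p<q<r$ be the three smallest prime divisors of $d$, which exist because $\omega(d)\ge 3$; since $d$ is odd they are odd, so $p\ge 3$, and since consecutive odd primes differ by at least $2$ we get $qr\ge(p+2)(p+4)=p^2+6p+8>9p$ and therefore
$$9p^2<pqr\le d.$$
In particular $3p\le\sqrt d$, so both $x=p$ and $x=3p$ are admissible odd values. If $p^2\nmid d$, then case $(i)$ of Lemma~\ref{step1} applied to $p$ already gives $M_{odd}(d)\ge 3$; hence I may assume $p^2\mid d$.

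Now I run the two-value argument of Lemma~\ref{step1} on $p$ itself, but extract the contradiction from $\omega(d)\ge 3$ rather than from a $p$-adic valuation. If either $\omega(d+p^2)\ge 3$ or $\omega(d+9p^2)\ge 3$ we are done, so suppose not. Both $d+p^2$ and $d+9p^2$ are even and, since $p^2\mid d$, divisible by $p^2$; having at most two prime divisors they must be of the form
$$d+p^2=2^\alpha p^\gamma,\qquad d+9p^2=2^\beta p^\delta\qquad(\alpha,\beta\ge 1,\ \gamma,\delta\ge 2).$$
Subtracting and dividing by $p^2$ gives $2^\beta p^{\delta-2}-2^\alpha p^{\gamma-2}=8$. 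As $p\ge 3$ does not divide $8$, at most one of $\delta-2,\gamma-2$ is positive. The principal branch $\gamma=\delta=2$ forces $2^\beta-2^\alpha=8$, hence $\alpha=3,\beta=4$ and $d=7p^2$; but then $\omega(d)\le 2$, contradicting $\omega(d)\ge 3$.

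The remaining work, and the main obstacle, is the two leftover branches in which exactly one of $\gamma-2,\delta-2$ is positive. Solving the relation there confines $d$ to the thin families $d=p^2(2^\beta-9)$, where $2^{\beta-3}-1$ is a power of $p$, and $d=p^2(2^\alpha-1)$, where $2^{\alpha-3}+1$ is a power of $p$. In either case write $C$ for the odd cofactor $d/p^2$; one checks $p\nmid C$, and since $\omega(d)\ge 3$ the integer $C$ is not a prime power, so its smallest prime factor $\ell\ne p$ satisfies $\ell^2\le C$ and hence $9\ell^2\le 9C\le p^2C=d$. If $\ell\parallel d$ then case $(i)$ of Lemma~\ref{step1} applied to $\ell$ finishes the proof; the only genuinely open configurations are those in which $\ell^2$ divides the cofactor $C$. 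Here the Mersenne-type constraint (that $2^{\beta-3}-1$ or $2^{\alpha-3}+1$ be a power of $p$) leaves only finitely many $\beta$ (resp.\ $\alpha$), which I would dispose of by direct inspection. Controlling this Diophantine residue — proving that an admissible cofactor cannot conceal a smaller squared prime outside a short explicit list — is where the real difficulty lies.
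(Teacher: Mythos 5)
Your reduction to the case $p^2\mid d$ (via Lemma~\ref{step1}$(i)$) and your treatment of the principal branch $\gamma=\delta=2$ are correct, but the proof is not complete: the two leftover branches are a genuine gap, and your own text concedes this. The claim that the conditions ``$2^{\alpha-3}+1$ is a power of $p$'' or ``$2^{\beta-3}-1$ is a power of $p$'' leave ``only finitely many'' exponents to inspect is unjustified. For $\delta-2=1$ the first condition reads $p=2^{\alpha-3}+1$, i.e.\ $p$ is a Fermat prime, and for $\gamma-2=1$ the second reads $p=2^{\beta-3}-1$, i.e.\ $p$ is a Mersenne prime; in neither case is finiteness known, so ``direct inspection'' cannot close these branches. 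Worse, the families are not empty of candidates one must actually argue about: e.g.\ $\beta=6$ gives $p=7$ and $d=(2^6-9)\cdot 7^2=5\cdot 11\cdot 7^2$ with $\omega(d)=3$, and both $d+7^2=2^3\cdot 7^3$ and $d+(3\cdot 7)^2=2^6\cdot 7^2$ have only two prime factors. This particular $d$ is rescued only because its \emph{smallest} prime divisor is $5$, not $7$, so your earlier reduction step catches it --- but turning that observation into a proof would require showing that $2^\beta-9$ (resp.\ $2^\alpha-1$) always has a prime factor smaller than $p$ whenever it is not a prime power, which is exactly the Diophantine assertion you leave open.

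The paper avoids all of this with a one-line device you did not use: once some prime $p$ satisfies $p^2\mid d$ and $\omega(d)\geq 3$, pick a second prime divisor $q$ of $d$ and test the single value $x=pq$. Since $d$ is divisible by $p^2$, $q$ and a third prime $r>q$, one has $d\geq p^2qr>(pq)^2$, so $x=pq$ is odd and $\leq\sqrt d$; and $d+(pq)^2$ is divisible by the three distinct primes $2$, $p$ and $q$, giving $\omega(d+(pq)^2)\geq 3$ outright. The point is to choose $x$ so that three distinct primes divide $d+x^2$ for trivial reasons, rather than to force a contradiction out of the factorization $d+x^2=2^\alpha p^\gamma$. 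Your squarefree case coincides with the paper's; replacing your entire $p^2\mid d$ analysis by the $x=pq$ argument would make the proof correct and short.
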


\begin{proof}
Write $d=\prod_{i=1}^rp_i^{e_i}$, where $r:=\omega(d)\geq 3$.\\ 
On the one hand, suppose that one of the $e_i$'s, say $e_1$, satisfies $e_1\geq 2$. 
We may assume that $3\leq p_2<\cdots <p_r$. 
Then, $d\geq p_1^{e_1}p_2^{r-1}\geq (p_1p_2)^2$. 
Since $2$, $p_1$ and $p_2$ divide $d+(p_1p_2)^2$, 
we do get $M_{odd}(d)\geq \omega(d+(p_1p_2)^2)\geq 3$.\\
On the other hand, suppose that $e_1=\cdots=e_r=1$.
We may assume that $3\leq p_1<\cdots <p_r$. 
Noticing that $t(t+2)(t+4)-9t^2 =t(t^2-3t+8)>0$ for $t>0$, we get $d\geq p_1(p_1+2)(p_1+4)>9p_1^2$. 
Applying Lemma \ref{step1}$(i)$ we also do get $M_{odd}(d)\geq 3$.
\end{proof}

\begin{lemma}\label{step3}
Let $d>1$ be odd and not prime. 
Assume that $M_{odd}(d)\leq 2$. 
Then, $d=3^3=27$, $d=7^3=343$, $d=3^3\cdot 5=135$ 
(this value is missing in \cite[Theorem 7.1]{GicaIndMath}), 
$d=p^2$ with $p$ prime, 
or $d=p^aq$ with $p\neq q$ prime and $a\in\{1,2\}$.
\end{lemma}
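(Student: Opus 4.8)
The plan is to combine the two preceding lemmas with one extra elementary device and a short finite computation. First I would record the reduction: assuming $d>1$ odd, not prime, and $M_{odd}(d)\le 2$, Lemma \ref{step2} forces $\omega(d)\le 2$, so either $d=p^{a}$ is a prime power with $a\ge 2$, or $d=p^{a}q^{b}$ with $p<q$ odd primes and $a,b\ge 1$. The extra device I would isolate first is a ``perfect square'' observation: if $d=n^{2}$ (necessarily with $n$ odd), then $x=n=\sqrt d$ is an admissible odd value and $d+x^{2}=2n^{2}$, so $M_{odd}(d)\le 2$ gives $\omega(2n^{2})=1+\omega(n)\le 2$, i.e. $\omega(n)\le 1$. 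Hence a square $d$ with $M_{odd}(d)\le 2$ must be of the form $p^{2k}$; in particular no $d=p^{a}q^{b}$ with both exponents even can occur.

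For the prime power case $d=p^{a}$, $a\ge 2$, I would push the exponent down with Lemma \ref{step1}. If $p>3$ and $a\ge 3$ then $p^{3}\mid d$ and $9p^{2}\le p^{a}=d$ unless $(p,a)\in\{(5,3),(7,3)\}$, so Lemma \ref{step1}$(ii)$ leaves only $d=p^{2}$ together with $d\in\{125,343\}$. If $p=3$ then Lemma \ref{step1}$(iii)$ rules out $a\ge 5$, leaving $a\in\{2,3,4\}$. This reduces the prime power case to the admissible shapes $p^{2}$, $27=3^{3}$, $343=7^{3}$ plus the two explicit values $d=81$ and $d=125$, which I would discard by the single evaluations $81+3^{2}=90=2\cdot3^{2}\cdot5$ and $125+1^{2}=126=2\cdot3^{2}\cdot7$, each of $\omega=3$.

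For the two prime case $d=p^{a}q^{b}$ I would first bound the exponents. Applying Lemma \ref{step1}$(ii)$ to the larger prime gives $b\le 2$ (for $b\ge 3$ one has $q>3$, $q^{3}\mid d$ and $9q^{2}\le pq^{3}\le d$), and applied to $p$ it gives $a\le 2$ when $p>3$ and $a\le 4$ when $p=3$ (via Lemma \ref{step1}$(iii)$). The perfect square observation then discards every case with $a$ and $b$ both even. What survives is exactly the admissible list $pq$, $p^{2}q$, $pq^{2}$ (the last being of the stated form $p^{a}q$ after renaming), together with the residual shapes $27q$, $81q$ and $27q^{2}$ coming from $p=3$. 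The shape $27q^{2}$ falls to the single evaluation $x=q$, since $27q^{2}+q^{2}=28q^{2}=2^{2}\cdot7\cdot q^{2}$ has $\omega=3$ whenever $q\ne 7$, the value $q=7$ being checked directly.

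The genuinely delicate point, and the main obstacle, is excluding $d=27q$ for $q\ne 5$ and $d=81q$ for all $q$, where Lemma \ref{step1} gives nothing (the prime $3$ sits at odd exponent $3$ or at exponent $4$, while $q$ occurs to the first power but with $9q^{2}>d$). For these I would evaluate $d+x^{2}$ at two odd multiples of $3$ and factor out the power of $3$: for $d=27q$ the choices $x=3$ and $x=9$ give $d+9=3^{2}(3q+1)$ and $d+81=3^{3}(q+3)$ with $3q+1$ and $q+3$ even and prime to $3$, so $M_{odd}(d)\le 2$ forces both $3q+1$ and $q+3$ to be powers of $2$; eliminating $q$ yields an equation of the shape $3\cdot2^{t}-8=2^{s}$ whose only relevant solution gives $q=5$, that is $d=135$. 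The same scheme with $x=3$ and $x=27$ handles $d=81q$, reducing it to $9\cdot2^{t}-80=2^{s}$, whose only solution yields $q=7$, which is already excluded by Lemma \ref{step1}$(i)$. Assembling the prime power and two prime cases then leaves exactly $27$, $343$, $135$, $p^{2}$, and $p^{a}q$ with $a\in\{1,2\}$, as claimed.
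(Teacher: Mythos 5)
Your overall strategy is sound and your conclusion agrees with the statement, but there is one concrete gap in your treatment of $d=81q$. You evaluate at $x=3$ and $x=27$; however $x=27$ is admissible only when $27\le\sqrt{81q}$, i.e. $q\ge 9$, so for $q\in\{5,7\}$ the second equation $q+9=2^{t}$ is simply not available and the exponential equation $9\cdot 2^{t}-80=2^{s}$ cannot be derived. These two values must be disposed of separately, e.g. by Lemma \ref{step1}$(i)$ applied to the prime $q$ (which works exactly because $9q^{2}\le 81q$ if and only if $q\le 9$), or by the single admissible evaluation $405+3^{2}=414=2\cdot 3^{2}\cdot 23$. As written, your appeal to Lemma \ref{step1}$(i)$ for $q=7$ serves only to discard a solution of an equation that was itself obtained from the inadmissible evaluation. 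A milder omission of the same kind: the ``direct check'' of $d=27\cdot 7^{2}=1323$ is asserted but not exhibited (e.g. $1323+3^{2}=1332=2^{2}\cdot 3^{2}\cdot 37$ works).

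Apart from this, your route differs from the paper's mainly in the organization of the two-prime case, and the paper's version is tighter in two places. First, instead of your perfect-square trick (which only eliminates the cases with both exponents even), the paper observes that for $d=p^{a}q^{b}$ with $a,b\ge 2$ the odd value $x=pq\le\sqrt d$ makes $2$, $p$ and $q$ all divide $d+(pq)^{2}$; this removes every case with two exponents $\ge 2$ at once, so the shape $27q^{2}$ never needs separate treatment. Second, for $d=3^{a}q$ with $a\ge 3$ the paper uses the single uniform pair $x=3$, $x=15$, valid for all such $a$ simultaneously: when $d\ge 225$, $\omega(d+9)\le 2$ and $\omega(d+225)\le 2$ force $3^{a-2}q+1=2^{\alpha}$ and $3^{a-2}q+25=2^{\beta}$, whence $24=2^{\alpha}(2^{\beta-\alpha}-1)$, $\alpha=3$ and the contradiction $8=3^{a-2}q+1\ge 3q+1\ge 16$; what remains is the finite check $d\in\{135,189\}$. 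This avoids your two separate exponential equations for $27q$ and $81q$, and in particular avoids the admissibility issue above. Your prime-power analysis coincides with the paper's.
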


\begin{proof}
By Lemma \ref{step2}, we have $\omega(d)\leq 2$.\\
{\bf First}, suppose that $\omega(d)=1$. Hence, $d=p^a$ for some $a\geq 2$.
If $p>3$ and $a\geq 3$ then $9p^2>d=p^a$, by Lemma \ref{step1}$(ii)$. 
Hence, $p^{a-2}<9$, i.e. $p^{a-2}=5$ or $7$ and $d=p^a=5^3=125$ or $d=7^3=343$. 
Since $\omega(5^3+1)=\omega(126)=3$, the case $d=5^3$ is excluded. 
If $p=3$ then $d=3^a$ with $2\leq a\leq 4$, by Lemma \ref{step1}$(iii)$. 
Since $\omega(3^4+3^2)=\omega(90)=3$, we get $d=3^2=9$ or $d=3^3=27$.\\
{\bf Second}, suppose that $\omega(d)=2$. Hence, $d=p^aq^b$ with $a,b\geq 1$. 
If we had $a,b\geq 2$ then $2$, $p$ and $q$ would divide $d+(pq)^2$ 
and we would have $M_{odd}(d)\geq \omega(d+(pq)^2)\geq 3$.\
Therefore, $d=p^aq$ with $a\geq 1$.\\
If $p>3$ and $a\geq 3$, 
then $d\geq pqp^2\geq 15p^2\geq 9p^2$ and $a\in\{1,2\}$, by Lemma \ref{step1}$(ii)$.\\ 
Now, assume that $p=3$ and $a\geq 3$, i.e. that $d=3^aq$ with $a\geq 3$ and $q\geq 5$. 
If we had $d\ge 15^2$ then we would have $\omega(d+3^2)\leq 2$ and $\omega(d+15^2)\leq 2$, 
which would yield $3^{a-2}q+1=2^\alpha$ and $3^{a-2}q+25=2\beta$, 
hence $24=2^\alpha(2^{\beta-\alpha-1})$ and the contradiction $8=2^\alpha =3^{a-2}q+1\geq 3q+1\geq 16$. 
Hence $d=3^aq<15^2$ with $a\geq 3$, which gives $d=3^3\cdot 5=135$ or $d=3^3\cdot 7=189$. 
Since $\omega(189+1^2)=\omega(190)=3$ we have finished the proof.
\end{proof}

\begin{lemma}\label{step4}
Assume that $d=p^2$ with $p\geq 3$ prime. 
Then $M_{odd}(d)\leq 2$ if and only if $d\in\{3^2=9, 5^2=25, 7^2=49\}$.
\end{lemma}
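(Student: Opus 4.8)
Lemma \ref{step4} asks which squares $d = p^2$ (with $p \geq 3$ prime) satisfy $M_{odd}(d) \leq 2$. The idea is to evaluate $f_d(x) = d + x^2 = p^2 + x^2$ at well-chosen small odd values of $x$ and force $\omega$ to exceed $2$ unless $p$ is tiny. The natural starting point is that both $p^2 + 1$ and $p^2 + 9$ are even, so the constraint $\omega \leq 2$ severely limits their odd parts. The main obstacle is organizing the case analysis so that it is clean rather than a brute scan; I would drive it by small substitutions and subtraction arguments like those already used in Lemma \ref{step1}.

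Let me think about the structure. First I would dispose of the three small primes $p \in \{3,5,7\}$ by direct computation, verifying $M_{odd}(9) \leq 2$, $M_{odd}(25) \leq 2$, $M_{odd}(49) \leq 2$; these give exactly the three claimed values. The point is that for $p \in \{3,5,7\}$ the range $1 \leq x \leq p$ of odd $x$ is short enough that every $p^2 + x^2$ can be checked to have at most two prime factors. So the real content is showing that no larger prime $p \geq 11$ works, i.e.\ that some odd $x \leq p$ produces $\omega(p^2 + x^2) \geq 3$.

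For $p \geq 11$ I would argue by finding a small odd $x$ with $x \leq \sqrt{d} = p$ such that $p^2 + x^2$ acquires three distinct prime divisors. A promising route: since $p$ is odd, $p^2 + 1 \equiv 2 \pmod 8$ when $p \equiv \pm 1 \pmod 4$, and one can examine the odd part of $p^2 + 1$. If $M_{odd}(d) \leq 2$ then $p^2 + 1 = 2^a q^b$ for a single odd prime $q$ (or a power of $2$). Evaluating at $x = 1$ and $x = 3$ gives $p^2 + 1$ and $p^2 + 9$, both even, and their difference is $8$; a subtraction argument analogous to the proof of Lemma \ref{step1} then constrains the $2$-adic valuations and the common odd structure, forcing $p$ to be small. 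The hard part will be handling the branches where $p^2 + 1$ or $p^2 + 9$ could be a pure power of $2$ or a prime power with the same odd prime, which must be ruled out by a growth inequality: for $p \geq 11$ one has $p^2 + 9 < 2^a q$ incompatible with the allowed sizes, exactly as the contradictions $10p^2 \leq 8p$ arise in Lemma \ref{step1}.

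Concretely, I would set up the analysis using $x = 1$ and $x = 3$ as the primary probes (both even values $p^2 + 1, p^2 + 9$), extract that $\omega \leq 2$ forces each to be of the form $2^\bullet \cdot (\text{prime power})$, and then subtract to isolate the shared odd prime $q \mid \gcd$-type relation; the divisibility $q \mid 8$ or a size contradiction then eliminates all $p \geq 11$. Should a single pair $(x=1, x=3)$ not suffice for some residue class of $p$, I would bring in $x = 5$ as a third probe, again using that $p^2 + 25$ is even and differencing against the earlier values to pin down the $2$-adic valuations. This keeps the proof in the same elementary spirit as Lemmas \ref{step1} and \ref{step2}, avoiding any appeal to class-number machinery, which is appropriate since $d = p^2$ lies outside the squarefree/prime cases where the algebraic number theory enters.
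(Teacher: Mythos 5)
Your reduction to the small cases $p\in\{3,5,7\}$ and the claim that the real work is eliminating $p\ge 11$ are both correct, but the engine you propose for that elimination does not run. The subtraction trick of Lemma \ref{step1} works there because both probe values $d+p^2$ and $d+(3p)^2$ share the fixed odd prime $p\mid d$; after factoring it out, the cofactors are forced to be powers of $2$, and subtracting gives $8p=2^{\alpha}(2^{\beta-\alpha}-1)$, which pins down the size. For $d=p^2$ and probes $x=1,3$ nothing of the sort happens: since $p^2\equiv 1\pmod 8$, both $p^2+1$ and $p^2+9$ have $2$-adic valuation exactly $1$, so $\omega\le 2$ only forces $(p^2+1)/2=q_1^{b_1}$ and $(p^2+9)/2=q_2^{b_2}$ with $q_1,q_2$ odd primes. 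Their difference is $4$, so $q_1\ne q_2$ (any common odd prime would divide $8$), and you are left with the statement that two consecutive-ish odd prime powers of size about $p^2/2$ differ by $4$ --- there is no ``size contradiction'' here, and no elementary reason such $p$ cannot exist. Adding $x=5$ only asks that $(p^2+25)/2$ also be an odd prime power, i.e.\ that $m$, $m+4$, $m+12$ all be prime powers with $m=(p^2+1)/2$; this is an admissible pattern with no fixed-divisor obstruction, so no congruence or subtraction argument will kill it. Your proposed contradiction ``$p^2+9<2^aq$ incompatible with the allowed sizes'' has no analogue of the $10p^2\le 8p$ inequality, because the allowed value $2q^b$ can be exactly as large as $p^2+9$.

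The missing idea is the paper's choice of probe: instead of fixed small $x$, pick $x$ in a residue class designed to inject \emph{three} distinct primes at once. Since $-1$ is a square modulo $5$ and modulo $13$, one can solve $x^2\equiv -p^2\pmod{65}$; the paper takes $x_p\equiv 47p\pmod{130}$ with $|x_p|\le 65$, which is automatically odd and satisfies $2\cdot 5\cdot 13\mid p^2+x_p^2$, hence $\omega(p^2+x_p^2)\ge 3$ outright --- provided $|x_p|\le 65\le p=\sqrt d$, i.e.\ $p\ge 65$. The remaining range $11\le p\le 61$ is then handled by a weaker mod-$10$ version of the same trick (guaranteeing only $\omega\ge 2$) plus direct numerical verification that $\omega\ge 3$ in fact holds. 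So even the paper does not close the middle range by pure reasoning; but for large $p$ the CRT construction is essential, and it is exactly what your two-probe subtraction scheme lacks.
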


\begin{proof}
Let $p\geq 3$ be an odd prime. 
Take $x_p\in (-65,65]\cap {\mathbb Z}$ such that $x_p\equiv 47p\pmod{130}$. 
Then $x_p$ is odd and $p^2+\vert x_p\vert^2\equiv p^2(1+47^2)\equiv 0\pmod{130}$. 
Hence $M_{odd}(p^2)\geq\omega(p^2+x_p^2)\geq\omega(130)= 3$ for $p\geq 65$. 
If $11\leq p\leq 61$ then either $p^2\equiv 1\pmod{10}$, in which case we set $y_p=3$, 
or $p^2\equiv 9\pmod{10}$, in which case we set $y_p=1$.
Then $p^2+y_p^2\equiv 0\pmod{10}$, hence $M_{odd}(p^2)\geq\omega (p^2+y_p^2)\geq 2$. 
Now numerical verifications show that in fact $\omega (p^2+y_p^2)\geq 3$ for $11\leq p\leq 65$. 
Hence $M_{odd}(p^2)\geq3$ for $11\leq p\leq 65$. 
Finally, $M_{odd}(3^2) =M_{odd}(5^2) =M_{odd}(7^2)=2$.
\end{proof}

\begin{lemma}\label{step5}
Assume that $d=p^2q$, with $p$ and $q$ two distinct odd primes. 
Then $M_{odd}(d)\leq 2$ if and only if $d\in\{3^2\cdot 7=63, 5^2\cdot 7 =175, 3^2\cdot 23= 207\}$.
\end{lemma}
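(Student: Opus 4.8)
The \emph{if} direction is a finite verification: for each of $d=63,175,207$ one checks directly that $\omega(d+x^2)\le 2$ for all odd $x\le\sqrt d$ (at most seven values of $x$ in each case). For the \emph{only if} direction, assume $M_{odd}(d)\le 2$ with $d=p^2q$. The plan is to extract a rigid Diophantine structure. I would first apply Lemma~\ref{step1}$(i)$ to the prime $q$ (legitimate since $q^2\nmid d=p^2q$): were $9q\le p^2$ it would give $M_{odd}(d)\ge 3$, so necessarily $p^2<9q$, i.e. $p<3\sqrt q$. The key structural fact is that for every odd $k$ with $1\le k\le\sqrt q$ the integer $x=kp$ satisfies $x\le\sqrt d$ and $d+(kp)^2=p^2(q+k^2)$; since $p$ divides this number, $\omega(d+x^2)\le 2$ forces $q+k^2$ to have all its prime factors in $\{2,p\}$, that is $q+k^2=2^{a_k}p^{b_k}$, with $a_k\ge 1$ because $q+k^2$ is even.

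For $q\le 7$ only $k=1$ is admissible, so I would treat this small range by hand: the condition $p<3\sqrt q$ together with $p^2<9q$ leaves only the pairs with $p,q\in\{3,5,7\}$, $p\ne q$, namely $d\in\{45,63,75,175\}$, and a direct computation of $M_{odd}$ retains exactly $63$ and $175$ (for instance $75+3^2=84=2^2\cdot3\cdot7$ and $45+5^2=70=2\cdot5\cdot7$ each have three prime factors, so $45$ and $75$ are discarded).

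For $q\ge 9$ both $k=1$ and $k=3$ are available, giving $q+1=2^{a_1}p^{b_1}$ and $q+9=2^{a_3}p^{b_3}$, whose difference is $8$. Since $p$ is odd it divides at most one of the two. If it divides neither, both are powers of $2$ differing by $8$, forcing $q+1=8$ and $q=7<9$, a contradiction. If $p\mid q+1$ (resp. $p\mid q+9$), the other factor is a pure power of $2$, and after dividing out $2^3$ one is reduced to an equation $2^m-1=p^{\beta}$ (resp. $2^m+1=p^{\beta}$); elementary congruences modulo $8$ together with the standard factorisations of $p^{\beta}\pm 1$ force $\beta=1$, whence $q+1=8p$ or $q+9=8p$, i.e. $q=8p-1$ or $q=8p-9$. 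Substituting into $p^2<9q$ gives $p\le 71$, and $p$ must be a Mersenne, resp. Fermat-type, prime, so $p\in\{3,7,31\}$, resp. $p\in\{3,5,17\}$. Imposing that $q$ be prime leaves only $q=23$, i.e. $d=207$, together with the two residual candidates $d=5^2\cdot 31=775$ and $d=17^2\cdot 127$.

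These last two pass every constraint coming from $k\in\{1,3\}$ but fail the one from $k=5$: both have $q\ge 25$, so $x=5p$ lies in range, and $q+25$ equals $56=2^3\cdot 7$, resp. $152=2^3\cdot 19$, introducing a third prime, which eliminates them and leaves the list $\{63,175,207\}$. I expect the main obstacle to be precisely this phenomenon: the multiples $x=p$ and $x=3p$ alone are insufficient, since near-misses such as $d=775$ survive them, so one must invoke the next value $x=5p$ and argue the very small $q$ separately. The care lies in organising the Mersenne/Fermat-type subcases cleanly and verifying that no further candidate escapes the bound $p\le 71$ obtained from $p^2<9q$.
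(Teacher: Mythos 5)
Your proof is correct in substance, but it is organised quite differently from the paper's, and the comparison is instructive. The paper's proof of Lemma~\ref{step5} proceeds in three stages: it first shows that $p=3$ or $q\leq 23$ by playing $x=p$, $x=3p$ and $x=5p$ against each other (so that at least two of $q+1$, $q+9$, $q+25$ must be pure powers of $2$, which is impossible for $q>23$); it then disposes of each $q\in\{5,11,13,17,19,23\}$ by a small table, uses Lemma~\ref{step1}$(i)$ applied to $q$ only for $q\in\{3,7\}$, and finally treats $p=3$ separately, bounding $q<729$ via $x=9$ and $x=81$. You instead apply Lemma~\ref{step1}$(i)$ to $q$ \emph{globally} to get $p^2<9q$ at the outset, which is the real engine of your argument: it collapses the paper's three-way split ($p=3$, small $q$, generic) into the two cases $q\leq 7$ and $q\geq 11$, replaces the paper's table by the classification $q=8p-1$ with $p$ Mersenne or $q=8p-9$ with $p=2^m+1$, and turns the bound on the remaining candidates into the clean inequality $p\leq 71$. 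The price is that $x=p$ and $x=3p$ alone leave the two survivors $5^2\cdot 31$ and $17^2\cdot 127$, which you correctly kill with $x=5p$; the paper's use of $x=5p$ from the start avoids these near-misses but at the cost of the threshold $q>23$ and the ensuing case list. Both routes are valid; yours is arguably the more systematic, the paper's the more economical in its final enumeration.

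One small inaccuracy to repair: your claim that congruences modulo $8$ and the factorisations of $p^{\beta}\pm 1$ force $\beta=1$ is true for $2^m-1=p^{\beta}$, but not for $2^m+1=p^{\beta}$, where the even-$\beta$ analysis $\bigl(p^{\beta/2}-1\bigr)\bigl(p^{\beta/2}+1\bigr)=2^m$ leaves the exceptional solution $2^3+1=3^2$. In your setting this would give $q+9=8\cdot 9=72$, i.e.\ $q=63$, which is not prime, so the exception is harmless --- but it must be stated and discarded explicitly rather than asserted away, since your reduction to $q=8p-9$ presupposes $\beta=1$.
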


\begin{proof}
Assume that $M_{odd}(p^2q)\leq 2$.\\ 
{\bf First}, we prove that $p=3$ or $q\leq 23$.\\
Indeed, suppose that $p>3$ and $q>23$ and let us get a contradiction. 
Then, $p^2\leq (3p)^2\leq (5p)^2\leq p^2q=d$ 
and $d+p^2=p^2(q+1)$, $d+(3p)^2 =p^2(q+9)$ and $d+(5p)^2=p^2(q+25)$. 
Since $p$ divides at most one of the three even integers $q+1$, $q+9$ and $q+25$ 
(otherwise $p>3$ would divide $8$, $16$ or $24$), 
we get that at least two of the three integers $q+1$, $q+9$ and $q+25$ are perfect powers of $2$. 
First, $q+1=2^\alpha$ and $q+9=2^\beta$ would imply $8=2^\alpha(2^{\beta-\alpha}-1)$ 
and $\alpha=3$ and the contradiction $8=2^\alpha=q+1>23+1$. 
Second, $q+9=2^\alpha$ and $q+25 =2^\beta$ would imply $16=2^\alpha(2^{\beta-\alpha}-1)$ 
and $\alpha=4$ and the contradiction $16=2^\alpha=q+9>23+9$. 
Third, $q+1=2^\alpha$ and $q+25=2^\beta$ would imply $24=2^\alpha(2^{\beta-\alpha}-1)$ 
and $\alpha=3$ and the contradiction $8=2^\alpha=q+1>23+1$.\\ 
{\bf Second}, we deal successively with each $q\leq 23$, i.e. with $q\in\{3,5,7,11,13,17,19,23\}$. 
If $q=23$, then $\omega(d+p^2)=\omega(24p^2)\leq 2$ yields $p=3$ and $d=3^2\cdot 23 =207$.
Then, the following Table shows that we cannot have $q\in\{5,11,13,17,19\}$:
$$\begin{array}{|r|r|r|r|r|r|}
\hline
q&d+p^2=(q+1)p^2&\omega(d+p^2)\leq 2\text{ implies}&d=p^2q&x&\omega(d+x^2)\\
\hline
5&6p^2&p=3&45&5&\omega(70)=3\\
\hline
11&12p^2&p=3&99&9&\omega(180)=3\\
\hline
13&14p^2&p=7&637&1&\omega(638)=3\\
\hline
17&18p^2&p=3&154&1&\omega(154)=3\\
\hline
19&20p^2&p=5&475&1&\omega(476)=3\\
\hline
\end{array}$$
Finally, for $q\in\{3,7\}$ Lemma \ref{step1}$(i)$ applied with $q$ instead of $p$, 
we have $9q^2>d$, i.e. $p^2<9q$. 
For $q=7$ we get $p\leq 5$, hence $d=3^2\cdot 7=63$ or $d=5^2\cdot 7=175$. 
For $q=3$ we get $p=5$, $d=5^2\cdot 3 =75$, 
for which $M_{odd}(75)\geq\omega(75+3^2)=\omega(84) =3$.\\
{\bf Third,} we finally deal with the case $p=3$ and $d=9q$ with $q\geq 5$ prime. 
Since $\omega(d+9^2)=\omega(9(q+9))\leq 2$ and $\omega(d+81^2)=\omega(9(q+729))\leq 2$ 
would imply $q+9=2^\alpha$, $q+729=2^\beta$ 
and the impossibility $2^\alpha(2^{\beta-\alpha}-1)=729-9 =2^4\cdot 45$, 
we have $d<81^2$, i.e. $q =d/9<729$.
For $q\geq 11$ we have $9^2\leq d$ 
and we must have $\omega (d+9^2) =\omega(9(q+9))\leq 2$, 
i.e. $q\in\{2^\alpha-9;\ \alpha\geq 4\} =\{7,23,503,2039,\cdots\}$. 
Hence, we must have $q\in\{5,7,23,503\}$. 
Since
$M_{odd}(3^2\cdot 5)\geq\omega(45+5^2)=3$ 
and $M_{odd}(3^2\cdot 503)\geq\omega(3^2\cdot 503+3^2)=3$ 
we get $q\in\{7,23\}$ and the desired result follows.
\end{proof}

\section{Notations}
To prove Theorems \ref{mainthimpairnot7mod8}, \ref{mainthimpair7mod8} and \ref{mainthd=2mod4}, 
we will need some results on algebraic number theory of imaginary quadratic number fields. 
So we need to set some notation we will be using throughout the paper.\\
We let 
${\mathbb K} 
={\mathbb Q}(\sqrt{-d})
=\{x+y\sqrt{-d};\ x,y\in {\mathbb Q}\}$ denote an imaginary quadratic number field, 
where $d>0$ is a positive rational integer. 
For $\alpha =x+y\sqrt{-d}\in {\mathbb K}$,
its conjugate is $\alpha' =x-y\sqrt{-d}\in {\mathbb K}$. 
We let ${\mathbb Z}_{\mathbb K}$, $d_{\mathbb K}<0$, $h_{\mathbb K}$ and ${\mathcal Cl}_{\mathbb K}$ 
denote its ring of algebraic integers, its negative its discriminant, its class number and its ideal class group. 
Recall that genus theory gives that 
the $2$-rank of ${\mathcal Cl}_{\mathbb K}$ is equal to $\omega(d_{\mathbb K})-1$. 
Moreover, if $d\geq 1$ is square-free 
and $p_1,\cdots,p_r$ are the $r\geq 0$ odd prime divisors of $d_{\mathbb K}$ 
then the ideal class $[{\mathcal P}_l]\in {\mathcal Cl}_{\mathbb K}$ 
of a not inert prime ideal ${\mathcal P}_l$ of prime norm $\ell\geq 2$ 
is a square in ${\mathcal Cl}_{\mathbb K}$ if and only if the $r$ Legendre's symbolds 
$\left (\frac{\ell}{p_k}\right )$ are equal to $+1$ for $1\leq k\leq r$.\\
For $\alpha\in {\mathbb Z}_{\mathbb K}$, 
we let $(\alpha)=\alpha{\mathbb Z}_{\mathbb K}$ denote the principal ideal generated by $\alpha$. 
For ${\mathcal I}$ a nonzero ideal of ${\mathbb Z}_{\mathbb K}$, 
we let $[{\mathcal I}]$ denote its ideal class in ${\mathcal Cl}_{\mathbb K}$.
If ${\mathcal I}$ an ideal of ${\mathbb Z}_{\mathbb K}$ then 
its norm $N({\mathcal I})$ is the order of the additive quotient group ${\mathbb Z}_{\mathbb K}/{\mathcal I}$ 
and its conjugate ideal ${\mathcal I}'$ is the ideal 
${\mathcal I}' =\{\alpha';\ \alpha\in {\mathcal I}\}$. 
The product ${\mathcal I}{\mathcal I}'$ is equal to the principal ideal $(N({\mathcal I}))$. 
Consequently, $[{\mathcal I}'] =[{\mathcal I}]^{-1}$ in ${\mathcal Cl}_{\mathbb K}$.
An ideal ${\mathcal I}$ is primitive 
if $n\in {\mathbb Z}$ and $(n)$ divides ${\mathcal I}$ imply $n=\pm 1$. 
Any ideal class in the class group of ${\mathbb K}$ contains a primitive ideal.
A primitive ideal ${\mathcal I}$ is a free ${\mathbb Z}$-module of the form 
${\mathcal I} =Q{\mathbb Z}+\frac{P+\sqrt{d_{\mathbb K}}}{2}{\mathbb Z}$, 
where $Q=N({\mathcal I})$ is the norm of ${\mathcal I}$ and $4Q$ divides $d_{\mathbb K}-P^2$. 
We may assume that $-Q<P\leq Q$. 
The conjugate ideal ${\mathcal I}'$ of ${\mathcal I}$ is the ideal 
${\mathcal I}' =Q{\mathbb Z}+\frac{-P+\sqrt{d_{\mathbb K}}}{2}{\mathbb Z}$. 
Since the product ${\mathcal I}{\mathcal I}'$ is clearly included in the principal ideal $(Q)$ 
and since both these ideals have norm $Q^2$, they are equal. 
Hence, we recover that ${\mathcal I}'$ is in the inverse of the ideal class of ${\mathcal I}$, 
i.e. that $[{\mathcal I}'] =[{\mathcal I}]^{-1}$ in ${\mathcal Cl}_{\mathbb K}$.
Finally, we will use:

\begin{lemma}\label{lnotinert}
Let $d>0$ be square-free. 
If an odd prime $\ell\geq 3$ is not inert in ${\mathbb K}={\mathbb Q}(\sqrt{-d})$ 
then there exists an integer $x\in\{0,2,\cdots,\ell\}$ such that $\ell\mid d+x^2$. 
We can also require $x$ to be even or odd.
\end{lemma}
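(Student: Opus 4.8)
The plan is to translate the arithmetic hypothesis into a congruence and then control the size and the parity of a solution by an elementary reflection. First I would recall the decomposition law: for an odd prime $\ell$ the splitting type of $\ell$ in ${\mathbb K}={\mathbb Q}(\sqrt{-d})$ is read off from the factorization of $X^2+d$ modulo $\ell$ via Dedekind's criterion, which is legitimate here because $d$ is square-free, so the index $[{\mathbb Z}_{\mathbb K}:{\mathbb Z}[\sqrt{-d}]]$ equals $1$ or $2$ and is therefore prime to the odd $\ell$. Thus $\ell$ is inert exactly when $X^2+d$ is irreducible modulo $\ell$, i.e. when $\left(\frac{-d}{\ell}\right)=-1$. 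Equivalently, saying that $\ell$ is \emph{not} inert is saying that the congruence $x^2\equiv -d\pmod\ell$ is solvable, the ramified case $\ell\mid d$ being accounted for by the solution $x\equiv 0\pmod\ell$.

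Second, I would fix a solution $x_1\in\{0,1,\dots,\ell-1\}$ of $x^2\equiv -d\pmod\ell$, which exists by the first step, so that $\ell\mid d+x_1^2$. The key observation is that $\ell-x_1$ is again a solution, since $(\ell-x_1)^2\equiv x_1^2\equiv -d\pmod\ell$, and that both $x_1$ and $\ell-x_1$ lie in the range $0\le x\le\ell$. This already produces an integer $x$ with $0\le x\le\ell$ and $\ell\mid d+x^2$.

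Third, for the parity refinement I would exploit that $\ell$ is odd, so that $x_1+(\ell-x_1)=\ell$ is odd; hence $x_1$ and $\ell-x_1$ have opposite parities. Therefore, among the two solutions $x_1$ and $\ell-x_1$ lying in $\{0,1,\dots,\ell\}$, one is even and the other is odd, and we may select $x$ of whichever parity we prescribe while keeping $\ell\mid d+x^2$ and $0\le x\le\ell$.

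The argument is elementary and short, and there is no genuine obstacle. The only point calling for a little care is the first step, namely checking that the non-inert hypothesis is truly equivalent to the solvability of $x^2\equiv -d\pmod\ell$ — in particular that the ramified boundary case $\ell\mid d$ is correctly included (it is, via $x\equiv 0$), and that Dedekind's criterion may be applied to $X^2+d$ even though ${\mathbb Z}[\sqrt{-d}]$ need not be the full ring of integers, which is guaranteed because $\ell$ is odd while the index divides $2$. Once this translation is in place, the range and parity claims follow at once from the reflection $x\mapsto\ell-x$.
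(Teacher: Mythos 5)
Your argument is correct and follows essentially the same route as the paper: translate the non-inert hypothesis into the solvability of $x^2\equiv -d\pmod\ell$ (via the Legendre symbol being $0$ or $+1$), pick a solution in $\{0,\dots,\ell\}$, and use the reflection $x\mapsto\ell-x$ together with the oddness of $\ell$ to adjust the parity. Your extra care in justifying the use of $X^2+d$ via Dedekind's criterion (the index being prime to the odd $\ell$) is a welcome refinement, but the substance of the proof is identical.
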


\begin{proof}
The Legendre symbol $\left (\frac{d_{\mathbb K}}{\ell}\right ) =\left (\frac{-d}{\ell}\right )$ 
is equal to $0$ or $+1$. 
Hence, $-d$ is a square modulo $\ell$, 
i.e. there exists $x\in {\mathbb Z}$ such that $\ell$ divides $d+x^2$. 
Reducing $x$ modulo $\ell$ we may assume that $0\leq x\leq\ell$.
Changing $x$ to $\ell-x$ if necessary, we can force $x$ to be odd or to be even. 
\end{proof}

We know that any ideal class in ${\mathcal Cl}_{\mathbb K}$ contains an ideal of norm less than or equal to 
the Minkowski's bound 
$$\left (\frac{4}{\pi}\right )^{r_2}\frac{n!}{n^n}\sqrt{\vert d_{\mathbb K}\vert} 
=\frac{2}{\pi}\sqrt{\vert d_{\mathbb K}\vert}.$$ 
We improve on the Minkowski's bound for imaginary quadratic number fields:

\begin{lemma}\label{MinkowskisqrtdK/3}
Let ${\mathbb K}$ be an imaginary quadratic number field. 
Then any ideal class in the ideal class group ${\mathcal Cl}_{\mathbb K}$ of ${\mathbb K}$ 
contains an ideal of norm $\leq\sqrt{\vert d_{\mathbb K}\vert/3}$.
In particular, ${\mathcal Cl}_{\mathbb K}$ is generated by the ideal classes of the non inert prime ideals 
of prime norms $\leq\sqrt{\vert d_{\mathbb K}\vert/3}$.
\end{lemma}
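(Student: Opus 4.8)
The plan is to derive the bound directly from the explicit description of primitive ideals recorded above, which is the ideal-theoretic shadow of the classical reduction theory of positive definite binary quadratic forms: a reduced form $aX^2+bXY+cY^2$ of discriminant $d_{\mathbb K}=b^2-4ac<0$ satisfies $|b|\le a\le c$, whence $3a^2\le 4ac-b^2=|d_{\mathbb K}|$. I will reproduce this inequality in the language of ideals, so as to stay within the framework already set up.

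Fix an ideal class $\mathcal C\in{\mathcal Cl}_{\mathbb K}$. Since every class contains a primitive ideal and ideal norms are positive integers, I may choose a primitive ideal $\mathcal I\in\mathcal C$ of \emph{minimal} norm $Q=N(\mathcal I)$, and write it as $\mathcal I=Q{\mathbb Z}+\beta{\mathbb Z}$ with $\beta=\frac{P+\sqrt{d_{\mathbb K}}}{2}$ and $-Q<P\le Q$, so that $|P|\le Q$. As $4Q$ divides $d_{\mathbb K}-P^2$, the number $c:=\frac{P^2-d_{\mathbb K}}{4Q}$ is a positive rational integer, and $N(\beta)=\beta\beta'=\frac{P^2-d_{\mathbb K}}{4}=Qc$.

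The crux is to show that minimality forces $Q\le c$. Since $\beta\in\mathcal I$ we have $(\beta)\subseteq\mathcal I$, so $\mathcal I$ divides $(\beta)$ in the Dedekind ring ${\mathbb Z}_{\mathbb K}$; writing $(\beta)=\mathcal I\mathcal J$ with $\mathcal J$ an integral ideal, we get $N(\mathcal J)=N(\beta)/N(\mathcal I)=c$ and, passing to classes, $[\mathcal J]=\mathcal C^{-1}$. Its conjugate $\mathcal J'$ then satisfies $[\mathcal J']=[\mathcal J]^{-1}=\mathcal C$ and $N(\mathcal J')=c$. A primitive ideal in the class of $\mathcal J'$ has norm at most $c$ and lies in $\mathcal C$, so the minimality of $Q$ gives $Q\le c$.

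Finally, combining $Q\le c$ with $|P|\le Q$ yields $4Q^2\le 4Qc=P^2-d_{\mathbb K}=P^2+|d_{\mathbb K}|\le Q^2+|d_{\mathbb K}|$, that is $3Q^2\le|d_{\mathbb K}|$ and $Q\le\sqrt{|d_{\mathbb K}|/3}$, proving the bound for $\mathcal C$. For the generation statement I would factor any ideal of norm $\le\sqrt{|d_{\mathbb K}|/3}$ into prime ideals: each prime factor has norm dividing that of the ideal, an inert prime factor is the principal ideal $(p)$ and so trivial in ${\mathcal Cl}_{\mathbb K}$, and every remaining factor is a non inert prime of prime norm $\le\sqrt{|d_{\mathbb K}|/3}$; hence these generate ${\mathcal Cl}_{\mathbb K}$. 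The one step demanding care is the minimality argument, namely extracting $\mathcal J$ from $(\beta)=\mathcal I\mathcal J$ and conjugating back into $\mathcal C$; the inequality chain and the factorization into prime ideals are then routine.
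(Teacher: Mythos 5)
Your proof is correct and follows essentially the same route as the paper's: both take a primitive ideal of minimal norm $Q$ in the class, produce the cofactor ideal of norm $(P^2-d_{\mathbb K})/(4Q)$ from the principal ideal generated by $(P+\sqrt{d_{\mathbb K}})/2$, conjugate it back into the class to get $Q\le (P^2-d_{\mathbb K})/(4Q)$ by minimality, and conclude $3Q^2\le|d_{\mathbb K}|$ using $|P|\le Q$. The only (cosmetic) difference is that you define the cofactor abstractly via divisibility in a Dedekind domain, where the paper writes it down explicitly as $Q'{\mathbb Z}+\frac{P+\sqrt{d_{\mathbb K}}}{2}{\mathbb Z}$ and checks the product by an inclusion-plus-norms argument.
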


\begin{proof}
Let ${\mathcal I} =Q{\mathbb Z}+\frac{P+\sqrt{d_{\mathbb K}}}{2}{\mathbb Z}$ 
be a primitive ideal of smallest norm $Q$ in a given ideal class ${\mathcal C}$, 
where $-Q<P\leq Q$ and $4Q$ divides $d_{\mathbb K}-P^2$.
We set $Q'=(P^2-d_{\mathbb K})/(4Q)>0$ 
and define the ideal ${\mathcal J} =Q'{\mathbb Z}+\frac{P+\sqrt{d_{\mathbb K}}}{2}{\mathbb Z}$.
Clearly, the product ${\mathcal I}{\mathcal J}$ is included in the principal ideal $(P+\sqrt{d_{\mathbb K}})/2)$. 
Both these ideals having norm $QQ'$, they are equal. 
Hence, $[{\mathcal J}] =[{\mathcal I}]^{-1} =[{\mathcal I}']$.
Therefore, $Q'\geq Q$ and 
$4Q^2\leq 4QQ' =P^2-d_{\mathbb K}\leq Q^2-d_{\mathbb K}$. 
The desired result follows.
\end{proof}

\section{Proof of Theorem \ref{mainthimpairnot7mod8}}
To begin with, $d$ must be a prime. 
Indeed, assume that $d=pq$ is a product of two distinct odd primes $p<q$ 
and that $M_{odd}(d)\leq 2$. 
Then $\omega(d+p^2)=\omega(p(p+q))\leq M_{odd}(d)\leq 2$ gives $8\leq p+q=2^a$ for some $a\geq 3$, 
which implies $d+p^2 =p(p+q) \equiv 0\pmod{8}$ 
and hence $d\equiv -p^2\equiv 7\pmod{8}$.
We devote a subsection to each one of these three possible cases $d\equiv 1,\ 3,\ 5\pmod{8}$.

\subsection{The case $d\equiv 3\pmod 8$ is prime}
\noindent\frame{\vbox{
\begin{proposition}\label{cased=p=3mod8}
Let $p\equiv 3\pmod 8$ be a prime. 
Set ${\mathbb K}={\mathbb Q}(\sqrt{-p})$, an imaginary quadratic number field 
of discriminant $d_{\mathbb K} =-p$.
If $M_{odd}(p)\leq 2$, then $h_{\mathbb K} =1$. 
Therefore, $M_{odd}(p)\leq 2$ if and only if $p\in\{3, 11, 19, 43, 67, 163\}$.
\end{proposition}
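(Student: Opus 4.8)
The plan is to translate the hypothesis $M_{odd}(p)\le 2$ into a statement about $\mathcal{Cl}_{\mathbb K}$ and then run a Frobenius--Rabinowitsch type argument adapted to \emph{prime powers} rather than primes. First I would record the arithmetic of $2$: since $p\equiv 3\pmod 8$ we have $d_{\mathbb K}=-p\equiv 5\pmod 8$, so $2$ is inert and, for odd $x$, $p+x^2\equiv 4\pmod 8$. Hence $(p+x^2)/4$ is an odd integer and $\omega(p+x^2)=1+\omega((p+x^2)/4)$, so that $M_{odd}(p)\le 2$ is equivalent to: for every odd $x$ with $1\le x\le\sqrt p$, the integer $(p+x^2)/4=N\bigl(\frac{x+\sqrt{-p}}2\bigr)$ is $1$ or a prime power. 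I would also isolate the factorization fact that if $(p+x^2)/4=\ell^k$ then, because $\ell\nmid\frac{x+\sqrt{-p}}2$ in $\mathbb Z_{\mathbb K}$, the ideal $\bigl(\frac{x+\sqrt{-p}}2\bigr)$ is the $k$-th power $\mathcal P^k$ of a \emph{single} prime $\mathcal P$ above $\ell$; in particular $[\mathcal P]^k=1$ in $\mathcal{Cl}_{\mathbb K}$.

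For the nontrivial implication $M_{odd}(p)\le 2\Rightarrow h_{\mathbb K}=1$ I would argue by contradiction. If $h_{\mathbb K}>1$ then, by Lemma \ref{MinkowskisqrtdK/3}, $\mathcal{Cl}_{\mathbb K}$ is generated by the classes of the non-inert prime ideals of norm $\le\sqrt{p/3}$, so some non-inert prime $\ell\le\sqrt{p/3}$ (necessarily $\ell\ge 3$, as $2$ is inert) has a non-principal prime ideal $\mathcal P_\ell$; let $n\ge 2$ be the order of $[\mathcal P_\ell]$. By Lemma \ref{lnotinert} I choose an odd $x_0$ with $\ell\mid p+x_0^2$ and $1\le x_0<\ell$ (possible since $\ell\nmid p$); by the reformulation $(p+x_0^2)/4=\ell^k$, and the factorization fact gives $[\mathcal P_\ell]^{\pm k}=1$, so $n\mid k$ and $k\ge 2$, with $p=4\ell^k-x_0^2$. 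The naive argument stops here, yielding only $[\mathcal P_\ell]^k=1$ and \emph{not} principality; this is exactly the main obstacle, and it is a genuine one, since honest prime powers occur (for instance $(107+1^2)/4=3^3$). To overcome it I would feed a \emph{second} value into the hypothesis: set $x_1=2\ell-x_0$, which is odd, satisfies $\ell\mid p+x_1^2$, and obeys $x_1^2<p$ because $p-x_1^2\ge(4\ell^2-x_0^2)-(2\ell-x_0)^2=2x_0(2\ell-x_0)>0$. Thus $x_1\le\sqrt p$ is admissible, so $(p+x_1^2)/4=\ell^{k_1}$ with $k_1>k$. Subtracting $p+x_0^2=4\ell^k$ from $p+x_1^2=4\ell^{k_1}$ yields $4\ell(\ell-x_0)=x_1^2-x_0^2=4\ell^k(\ell^{k_1-k}-1)$, i.e. $\ell-x_0=\ell^{k-1}(\ell^{k_1-k}-1)\ge\ell(\ell-1)\ge 2\ell$, which contradicts $\ell-x_0<\ell$. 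Hence $h_{\mathbb K}=1$.

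For the converse I would show $h_{\mathbb K}=1$ forces $M_{odd}(p)\le 2$. Any prime $\ell$ dividing $(p+x^2)/4$ is non-inert, and since $h_{\mathbb K}=1$ the prime ideal above it is principal, say $(\frac{u+v\sqrt{-p}}2)$ with $\frac{u^2+pv^2}4=\ell$; as $v=0$ is impossible, $v\ne 0$ and $\ell\ge p/4$. Because $(p+x^2)/4\le p/2$ for $x\le\sqrt p$, two distinct such prime factors would multiply to at least $(p/4)^2>p/2$ (for $p>8$), and a prime power $\ell^k$ with $k\ge 2$ would already exceed $p/2$; hence $(p+x^2)/4$ is $1$ or prime and $\omega(p+x^2)\le 2$ (the case $p=3$ being checked directly). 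This gives $M_{odd}(p)\le 2\iff h_{\mathbb K}=1$, and by the Baker--Stark determination of the class-number-one imaginary quadratic fields the primes $p\equiv 3\pmod 8$ with $d_{\mathbb K}=-p$ and $h_{\mathbb K}=1$ are exactly $\{3,11,19,43,67,163\}$ (the value $7\equiv 7\pmod 8$ being excluded), which is the asserted list.
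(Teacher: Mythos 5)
Your proof is correct and follows essentially the same route as the paper's: both reduce to showing that a non-inert odd prime $\ell\le\sqrt{p/3}$ leads to a contradiction via Lemmas \ref{MinkowskisqrtdK/3} and \ref{lnotinert}, the relation $p+x_0^2=4\ell^k$ with $k\ge 2$, and the reflected value $x_1=2\ell-x_0$. The only differences are cosmetic: the paper obtains $k\ge 2$ from the size bound $\ell^k=(p+x_0^2)/4\ge(p+1)/4>\sqrt{p/3}$ rather than from the order of $[\mathcal P_\ell]$, reaches the final contradiction by showing $\ell^2\nmid p+x_1^2$ (hence $p+x_1^2=4\ell$) instead of your direct subtraction identity, and simply invokes the known Frobenius--Rabinowitsch characterization together with \cite{Watkins} for the converse that you prove by hand.
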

}}

\begin{proof}
Assume that $M_{odd}(p)\leq 2$.
We prove that any prime $\ell\leq\sqrt{\vert d_{\mathbb K}\vert/3} =\sqrt{p/3}$ is inert in ${\mathbb K}$. 
Hence $h_{\mathbb K}=1$ by Lemma \ref{MinkowskisqrtdK/3} 
and \cite{Watkins} gives the second assertion.\\
Since $d_{\mathbb K} =-p\equiv 5\pmod 8$, the prime $q=2$ is inert in ${\mathbb K}$. 
Now, 
assume that some odd prime $\ell\leq\sqrt{p/3}$ is not inert in ${\mathbb K}$
and let us arrive at some contradiction. 
Then $p>3$ 
and there exists $x\in {\mathbb Z}$ odd with $1\leq x\leq\ell\leq\sqrt{p/3}$ such that $\ell$ divides $p+x^2$, 
by Lemma \ref{lnotinert}. 
Since $p+x^2\equiv 4\pmod 8$, $\ell\mid d+x^2$ and $M_{odd}(p)\leq 2$, 
we get 
$$\hbox{$p+x^2=4\ell^a$ for some $a\geq 1$ and some odd $x$ with $1\leq x\leq l\leq\sqrt{p/3}$.}$$ 
Since $\ell^a=(p+x^2)/4\geq (p+1)/4>\sqrt{p/3}\geq\ell$, we have $a\geq 2$. 
Consequently,
$$l\nmid x.$$
The positive odd rational integer 
$$x':=2\ell-x\geq\ell >0$$
satisfies 
$$1\leq x'\leq\sqrt{p}.$$
Indeed, $x'=f(\ell)$, 
where $f(t):=2t-\sqrt{4t^a-p}$ is such that 
$f'(t)=2(1-at^{a-1}/\sqrt{4t^a-p})$ $\leq 0$ for $t\geq 1$
(notice that $a\geq 2$ yields $4t^a-p\leq 4t^a\leq a^2t^{2a-2}$ for $t\geq 1$). 
Hence, letting $t_0\geq 1$ denote the positive real number defined by $p+1=4t_0^a$, 
we do get 
$x'=f(\ell)\leq f(t_0) =2t_0-1\leq\sqrt{4t_0^a}-1=\sqrt{p+1}-1\leq\sqrt{p}$.\\ 
Moreover, $d+x'^2=d+x^2-4\ell x+4\ell^2 =4\ell^a+4\ell^2-4\ell x\equiv -4\ell x\pmod{\ell^2}$. 
Hence, $1\leq x'\leq\sqrt{p}$, $x'$ is odd, $p+x'^2\equiv 2\pmod 4$, $\ell\mid d+x'^2$ and $\ell^2\nmid d+x'^2$. 
Since $M_{odd}(p)\leq 2$ we obtain 
$$p+x'^2=4l$$ 
and the desired contradiction 
$\ell =(p+x'^2)/4\geq (p+1)/4>\sqrt{p/3}\geq\ell$.
\end{proof}

\subsection{The case $d\equiv 5\pmod 8$ is prime}
We need to improve upon Lemma \ref{MinkowskisqrtdK/3} 
that gives a Minkowski bound $\sqrt{\vert d_{\mathbb K}\vert/3} =\sqrt{4d/3}>\sqrt{d}$ 
to get a Minkowski bound $\leq\sqrt{d}$.
We give a more precise statement and different proof of \cite[Lemma 2.1]{GicaIndMath}:

\begin{lemma}\label{Minkowski}
Let $d\equiv 1,\ 2\pmod 4$ be a positive square-free integer. 
Set ${\mathbb K} ={\mathbb Q}(\sqrt{-d})$, 
an imaginary quadratic number field 
of ring of algebraic integers ${\mathbb Z}_{\mathbb K} ={\mathbb Z}[\sqrt{-d}]$ 
and discriminant $d_{\mathbb K} =-4d$. 
Let ${\mathcal Q}_2$ denote the prime ramified ideal above $2$, 
i.e. $(2)={\mathcal Q}_2^2$ in ${\mathbb Z}[\sqrt{-d}]$. 
Then any ideal class in the ideal class group ${\mathcal Cl}_{\mathbb K}$
contains an ideal of the form ${\mathcal Q}_2^a{\mathcal I}$, 
where ${\mathcal I}$ is an ideal of odd norm $\leq\sqrt{d}$. 
In particular, ${\mathcal Cl}_{\mathbb K}$ is generated by the ideal classes of the non inert prime ideals 
${\mathcal Q}$ of prime norm $q=N({\mathcal Q})\leq\max(2,\sqrt{d})$.
\end{lemma}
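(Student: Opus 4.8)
The plan is to re-run the ideal-reduction argument of Lemma~\ref{MinkowskisqrtdK/3} while keeping track of the prime ${\mathcal Q}_2$ above $2$, which is ramified here since $d_{\mathbb K}=-4d$ forces $-d\equiv 2,3\pmod 4$. The first thing I would record is a parity constraint coming from ramification: because ${\mathcal Q}_2^2=(2)$, a primitive ideal ${\mathcal I}$ cannot be divisible by ${\mathcal Q}_2^2$ (that would force $2\mid{\mathcal I}$), so the exact power of ${\mathcal Q}_2$ dividing ${\mathcal I}$ is ${\mathcal Q}_2^0$ or ${\mathcal Q}_2^1$; equivalently its norm $Q=N({\mathcal I})$ is odd or exactly twice an odd integer. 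Writing ${\mathcal I}={\mathcal Q}_2^{\epsilon}{\mathcal J}$ with $\epsilon\in\{0,1\}$ then exhibits ${\mathcal J}$ of odd norm $Q$ or $Q/2$, so the target statement is equivalent to: every ideal class contains such a ${\mathcal J}$ with its odd norm $\leq\sqrt d$.

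Next I would take, in a given class ${\mathcal C}$, the primitive ideal ${\mathcal I}$ of smallest norm $Q$, which by Lemma~\ref{MinkowskisqrtdK/3} satisfies $Q\leq\sqrt{\vert d_{\mathbb K}\vert/3}=(2/\sqrt 3)\sqrt d$. Using the reduced description from the Notations with $P=2b$ (so $(P+\sqrt{d_{\mathbb K}})/2=b+\sqrt{-d}$), one has ${\mathcal I}=Q{\mathbb Z}+(b+\sqrt{-d}){\mathbb Z}$ with $Qc=d+b^2$ and $2\vert b\vert\leq Q\leq c$, where $c=(d+b^2)/Q$ is the quantity $Q'$ of that lemma. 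Two cases are then immediate. If $Q$ is even, then ${\mathcal J}={\mathcal Q}_2^{-1}{\mathcal I}$ has odd norm $Q/2\leq(1/\sqrt 3)\sqrt d<\sqrt d$ and ${\mathcal C}=[{\mathcal Q}_2][{\mathcal J}]$. If $Q$ is odd with $Q\leq\sqrt d$, then ${\mathcal I}$ itself works with $a=0$.

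The hard part, and the only case resisting this, is $Q$ odd with $\sqrt d<Q\leq(2/\sqrt 3)\sqrt d$: here ${\mathcal I}$ has odd norm but exceeds $\sqrt d$, and being of odd norm it carries no factor of ${\mathcal Q}_2$ to shed. My plan for this case is to produce instead an even-norm primitive ideal of controlled size in ${\mathcal C}$ or in ${\mathcal C}^{-1}$. From $Q\leq c=(d+b^2)/Q$ one gets $\sqrt{Q^2-d}\leq\vert b\vert\leq Q/2$, and then two elementary estimates, valid precisely because $Q>\sqrt d$: first $c\leq d/Q+Q/4\leq 2\sqrt d$, and second, for $m:=Q+c-2\vert b\vert$, the identity $Qm=(Q-\vert b\vert)^2+d$ together with $Q-\vert b\vert\leq Q-\sqrt{Q^2-d}\leq\sqrt d$ gives $m\leq 2\sqrt d$. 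Now $c$ is the norm of the primitive ideal $c{\mathbb Z}+(b+\sqrt{-d}){\mathbb Z}$ lying in ${\mathcal C}^{-1}$, while $m=f(1,\mp 1)$ is a primitive value of the norm form $f(x,y)=Qx^2+2bxy+cy^2$, hence the norm of a primitive ideal in ${\mathcal C}^{\pm 1}$. Since $Q$ is odd, $c+m\equiv Q\pmod 2$ is odd, so exactly one of $c,m$ is even; that one is an even norm $\leq 2\sqrt d$ which is $\not\equiv 0\pmod 4$ by primitivity. Dividing the corresponding ideal by ${\mathcal Q}_2$ leaves an ideal of odd norm $\leq\sqrt d$ in ${\mathcal C}^{\pm 1}$; conjugating if it landed in ${\mathcal C}^{-1}$ (conjugation preserves the norm and, since $[{\mathcal Q}_2]^{-1}=[{\mathcal Q}_2]$, preserves the shape of the conclusion), I obtain ${\mathcal C}=[{\mathcal Q}_2][{\mathcal I}']$ with $N({\mathcal I}')$ odd and $\leq\sqrt d$. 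The boundary configurations of the reduced form ($2\vert b\vert=Q$ or $Q=c$) are harmless and only need a routine check.

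Finally, for the \emph{in particular} clause I would note that inert primes are principal, hence generate the trivial class, that $[{\mathcal Q}_2]$ is the class of a prime of norm $2$, and that any ideal of odd norm $\leq\sqrt d$ factors into non-inert prime ideals each of norm $\leq\sqrt d$; hence ${\mathcal Cl}_{\mathbb K}$ is generated by the non-inert prime ideals of norm $\leq\max(2,\sqrt d)$. I expect the genuine obstacle to be exactly the middle range $\sqrt d<Q\leq(2/\sqrt 3)\sqrt d$ treated above, since it is there that the naive reduction overshoots $\sqrt d$; everything else is the reduction theory already set up in the Notations and in Lemma~\ref{MinkowskisqrtdK/3}.
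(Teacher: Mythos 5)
Your proof is correct, but it follows a genuinely different route from the paper's. You take the primitive ideal of smallest \emph{total} norm $Q$ in the class, invoke Lemma~\ref{MinkowskisqrtdK/3} to get $Q\leq(2/\sqrt 3)\sqrt d$, and then patch the one problematic window ($Q$ odd with $\sqrt d<Q\leq(2/\sqrt 3)\sqrt d$) by exhibiting, among $c=(d+b^2)/Q$ and $m=Q+c-2\vert b\vert=f(1,\mp 1)$, an even primitive norm $<2\sqrt d$ in ${\mathcal C}^{-1}$ from which ${\mathcal Q}_2$ can be peeled off; the parity count $c+m\equiv Q\pmod 2$ and the identity $Qm=(Q-\vert b\vert)^2+d$ make this work, and the conjugation at the end correctly absorbs the sign (in fact both the $c$-ideal and the $m$-ideal lie in ${\mathcal C}^{-1}$, a detail your $[{\mathcal Q}_2]=[{\mathcal Q}_2]^{-1}$ remark already neutralizes). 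The paper instead minimizes the \emph{odd part} of the norm from the outset: it writes each primitive ideal in ${\mathcal C}$ as ${\mathcal Q}_2^a{\mathcal I}$ with $N({\mathcal I})=Q$ odd minimal, chooses the representative $P$ of the same parity as $d$ so that $d+P^2\equiv 2\pmod 4$, and uses the composition ${\mathcal Q}_2{\mathcal I}{\mathcal J}=(P+\sqrt{-d})$ with $2QQ'=d+P^2$; minimality forces $Q'\geq Q$, whence $2Q^2\leq d+P^2\leq d+Q^2$ and $Q\leq\sqrt d$ in one stroke. The paper's argument is shorter and self-contained (it does not need Lemma~\ref{MinkowskisqrtdK/3} at all), because inserting ${\mathcal Q}_2$ into the composition is exactly what converts the generic relation $4QQ'=P_0^2-d_{\mathbb K}$ into $2QQ'=d+P^2$ and buys the factor $\sqrt 2$ improvement structurally rather than by case analysis; your version buys transparency about \emph{where} the naive Minkowski-type bound fails and shows the fix is purely a reduced-form computation, at the cost of a longer case split and a reliance on the form-ideal dictionary (primitive values of $f$ at coprime arguments giving primitive ideals) that the paper never has to invoke.
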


\begin{proof}
Now, let ${\mathcal L}$ be a primitive ideal in a given ideal class ${\mathcal C}$. 
Write ${\mathcal L}={\mathcal Q}_2^a{\mathcal I}$, 
where $a\in\{0,1\}$ 
and ${\mathcal I} =Q{\mathbb Z}+(P+\sqrt{-d}){\mathbb Z}$ is a primitive ideal of odd norm $Q$, 
where $Q$ divides $d+P^2$ and $0\leq P\leq Q$. 
We take ${\mathcal L}\in {\mathcal C}$ with $Q$ as small as possible 
and prove that $Q\leq\sqrt{d}$.
Since $Q$ is odd, $P$ or $P'=Q-P$ is of the same parity as $d$.\\
First, assume that $P$ is of the same parity as $d$. 
Then 
${\mathcal Q}_2 =2{\mathbb Z}+(P+\sqrt{-d}){\mathbb Z}$ 
and $d+P^2\equiv 2\pmod 4$.
We set ${\mathcal J} =Q'{\mathbb Z}+(P+\sqrt{-d}){\mathbb Z}$, 
where $Q' =(d+P^2)/(2Q)$ is odd. 
Then 
${\mathcal Q}_2{\mathcal I}{\mathcal J}
=2QQ'{\mathbb Z}+(P+\sqrt d){\mathbb Z} 
=(d+P^2){\mathbb Z}+(P+\sqrt d){\mathbb Z}$ 
$=(P+\sqrt d)$ is principal. 
Therefore, 
in ${\mathcal Cl}_{\mathbb K}$ we have 
${\mathcal C} 
=[{\mathcal Q}_2{\mathcal I}]
=[{\mathcal J}]^{-1}
=[{\mathcal J}']$ if $a=1$ 
and 
${\mathcal C} 
=[{\mathcal I}]
=[{\mathcal Q}_2{\mathcal J}]^{-1}
=[{\mathcal Q}_2{\mathcal J}']$ if $a=0$. 
By the minimality of $Q$ it follows that $Q'\geq Q$, 
i.e. that 
$d+P^2\geq 2Q^2$, 
which in using $0\leq P\leq Q$ gives $Q\leq\sqrt d$, as desired.\\
Second, assume that $P'$ is of the same parity as $d$. 
The same reasoning with $P'$ replacing $P$ and the conjugate ideal ${\mathcal J}'$ replacing ${\mathcal J}$ 
gives the same conclusion $Q\leq\sqrt d$, as desired.
\end{proof}

\noindent\frame{\vbox{
\begin{proposition}\label{casedsquarefree=5mod8}
Let $d\geq 5$ be square-free and such that $d\equiv 5\pmod 8$ be a prime. 
Set ${\mathbb K}={\mathbb Q}(\sqrt{-d})$, 
an imaginary quadratic number field 
of ring of algebraic integers ${\mathbb Z}_{\mathbb K} ={\mathbb Z}[\sqrt{-d}]$ 
and discriminant $d_{\mathbb K} =-4d$.
If $M_{odd}(d)\leq 2$, then $h_{\mathbb K} =2$. 
Therefore, $M_{odd}(d)\leq 2$ if and only if $d\in\{5, 13 ,37\}$.
\end{proposition}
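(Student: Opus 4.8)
The plan is to mirror the reflection argument of Proposition~\ref{cased=p=3mod8}, but fed by the sharper Minkowski bound of Lemma~\ref{Minkowski}. Since $d\equiv 5\pmod 8$ is prime we have $d_{\mathbb K}=-4d$, so $2$ ramifies as $(2)=\mathcal Q_2^2$ and the prime divisors of $d_{\mathbb K}$ are $2$ and $d$; genus theory then gives that the $2$-rank of $\mathcal Cl_{\mathbb K}$ equals $\omega(d_{\mathbb K})-1=1$, whence $2\mid h_{\mathbb K}$. By Lemma~\ref{Minkowski}, $\mathcal Cl_{\mathbb K}$ is generated by the classes of the non-inert prime ideals of prime norm $\leq\max(2,\sqrt d)=\sqrt d$ (recall $d\geq 5$). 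As $[\mathcal Q_2]^2=[(2)]$ is trivial, it will suffice to prove that \emph{every odd prime $\ell\leq\sqrt d$ is inert in $\mathbb K$}: this forces $\mathcal Cl_{\mathbb K}=\langle[\mathcal Q_2]\rangle$, so $h_{\mathbb K}\leq 2$, and combined with $2\mid h_{\mathbb K}$ we obtain $h_{\mathbb K}=2$.

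So I would suppose, for a contradiction, that some odd prime $\ell\leq\sqrt d$ is not inert. By Lemma~\ref{lnotinert} there is an odd $x$ with $1\leq x\leq\ell$ and $\ell\mid d+x^2$; since $d$ is prime and $\ell<d$, the value $x=\ell$ is excluded, so $1\leq x<\ell$ and $\ell\nmid x$. As $x$ is odd and $d\equiv 5\pmod 8$, we have $d+x^2\equiv 6\pmod 8$, so $2\parallel d+x^2$; together with $\ell\mid d+x^2$ and $\omega(d+x^2)\leq M_{odd}(d)\leq 2$ this forces $d+x^2=2\ell^a$ for some $a\geq 1$. The crucial refinement --- and the step I expect to be the main obstacle --- is pinning down $a$: reducing $2\ell^a\equiv 6\pmod 8$ gives $\ell^a\equiv 3\pmod 4$, which forces $\ell\equiv 3\pmod 4$ and $a$ odd, while $\ell^a=(d+x^2)/2\geq(d+1)/2>\sqrt d\geq\ell$ gives $a\geq 2$; hence $a\geq 3$. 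It is precisely this inequality $a\geq 3$ (rather than the $a\geq 2$ available in Proposition~\ref{cased=p=3mod8}) that keeps the reflected value inside the admissible range.

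Next I would set $x':=2\ell-x$, which is odd with $x'\geq\ell+1>0$. A computation modulo $\ell^2$ gives $d+x'^2\equiv -4\ell x\pmod{\ell^2}$, so $\ell\parallel d+x'^2$ (using $\ell\nmid x$), while $2\parallel d+x'^2$ since $x'$ is odd. The key size estimate is $x'\leq\sqrt d$: writing $d=2\ell^a-x^2$, the inequality $x'^2\leq d$ is equivalent to $\ell^2+(\ell-x)^2\leq\ell^a$, and this holds because $\ell^2+(\ell-x)^2<2\ell^2\leq\ell^3\leq\ell^a$, using $\ell\geq 3$ and $a\geq 3$. Thus $x'$ is an admissible odd value with $1\leq x'\leq\sqrt d$ whose only prime divisors are $2$ and $\ell$, each to the first power, so $\omega(d+x'^2)\leq 2$ yields $d+x'^2=2\ell$ and hence $\ell=(d+x'^2)/2\geq(d+1)/2>\sqrt d$, contradicting $\ell\leq\sqrt d$. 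This contradiction shows every odd prime $\leq\sqrt d$ is inert, whence $h_{\mathbb K}=2$ as explained above.

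Finally, knowing $h_{\mathbb K}=2$ with $d_{\mathbb K}=-4d$ and $d\equiv 5\pmod 8$ prime, the classification of imaginary quadratic fields of class number two in \cite{Watkins} leaves only the discriminants $-20$, $-52$, $-148$, that is $d\in\{5,13,37\}$; a direct check that $\omega(d+x^2)\leq 2$ for all odd $x$ with $1\leq x\leq\sqrt d$ in these three cases establishes the converse and completes the proof.
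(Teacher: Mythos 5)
Your proof is correct and follows essentially the same route as the paper: reduce to showing every odd prime $\ell\leq\sqrt d$ is inert via Lemmas \ref{Minkowski} and \ref{lnotinert}, force $d+x^2=2\ell^a$ with $a\geq 3$ from the congruence $d+x^2\equiv 6\pmod 8$, and reflect to $x'=2\ell-x$ to reach the contradiction $d+x'^2=2\ell$. The only (welcome) deviation is that you verify $x'\leq\sqrt d$ by the clean algebraic inequality $\ell^2+(\ell-x)^2<2\ell^2\leq\ell^a$ instead of the paper's monotonicity argument for $f(t)=2t-\sqrt{2t^a-d}$, and you make explicit via genus theory why $h_{\mathbb K}$ equals $2$ rather than $1$.
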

}}

\begin{proof}
Assume that $M_{odd}(d)\leq 2$.
We claim that any odd prime $\ell\leq\sqrt{d}$ is inert in ${\mathbb K}$. 
Hence, ${\mathcal Cl}_{\mathbb K}$ is generated by the ideal class of order $\leq 2$ 
of the prime ramified ideal above $2$, by Lemma \ref{Minkowski}.
The first assertion follows and \cite{Watkins} implies the second assertion.\\
Indeed, 
assume that some odd prime $\ell\leq\sqrt{d}$ is not inert in ${\mathbb K}$
and let us arrive at some contradiction. 
Then $d>3$ and there exists $x\in {\mathbb Z}$ odd with $1\leq x\leq\ell\leq\sqrt{d}$ 
such that $\ell$ divides $p+x^2$, 
by Lemma \ref{lnotinert}.
Since $d+x^2\equiv 2\pmod 4$, $\ell\mid d+x^2$ and $M_{odd}(d)\leq 2$, 
we have 
$$\hbox{$d+x^2=2\ell^a$ for some $a\geq 1$ and some odd $x$ with $1\leq x\leq\sqrt{d}$.}$$
Since $2\ell^a =d+x^2\geq d+1>2\sqrt{d}\geq 2l$ we have $a\geq 2$. 
Consequently, as $d$ is square-free we have $$\ell\nmid x.$$
Since $d+x^2\equiv 6\pmod 8$ and $2\ell^2\equiv 2\pmod 8$, 
we have $a\neq 2$. 
Hence, $a\geq 3$. 
The positive odd rational integer
$$x':=2\ell-x\geq\ell >0$$ 
satisfies 
$$1\leq x'\leq\sqrt{p}.$$
Indeed, $x'=f(\ell)$, 
where $f(t):=2t-\sqrt{2t^a-d}$ is such that 
$f'(t)=2-at^{a-1}/\sqrt{2t^a-d}$ $\leq 0$ for $t\geq 1$
(notice that $a\geq 2$ yields $2t^a-d\leq 2t^a\leq a^2t^{2a-2}$ for $t\geq 1$). 
Hence, letting $t_0\geq 1$ denote the positive real number defined by $d+1=2t_0^a$ 
we do get 
$x'=f(\ell)\leq f(t_0) =2t_0-1 =2\left (\frac{d+1}{2}\right )^{1/a}-1\leq 2\left (\frac{d+1}{2}\right )^{1/3}-1\leq\sqrt{d}$.\\
Moreover, $d+x'^2=d+x^2-4\ell x+4\ell^2 =2\ell^a+4\ell^2-4\ell x\equiv -4\ell x\pmod{2\ell^2}$. 
Hence, $1\leq x'\leq\sqrt{d}$, $x'$ is odd, $d+x'^2\equiv 2\pmod{4}$, $\ell\mid d+x'^2$ and $\ell^2\nmid d+x'^2$. 
Since $M_{odd}(d)\leq 2$ we obtain 
$$d+x'^2=2l$$ 
and the desired contradiction 
$\ell =(p+x'^2)/2\geq (d+1)/2>\sqrt{d}\geq\ell$.
\end{proof}

\subsection{The case $d\equiv 1\pmod 8$ is prime}
\begin{lemma}\label{Lemmacased=p=1mod8}
Let $p\equiv 1\pmod 8$ be a prime with $M_{odd}(p)\leq 2$.
Let $\ell\leq\sqrt{p}$ be an odd non inert prime in ${\mathbb K} ={\mathbb Q}(\sqrt{-p})$. 
Then there exists 
$x\in {\mathbb Z}$ such that $p+x^2=2\ell^2$. 
\end{lemma}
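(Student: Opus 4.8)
The plan is to mimic the descent used in Propositions \ref{cased=p=3mod8} and \ref{casedsquarefree=5mod8}, but to notice that here the congruence obstruction only eliminates exponents $a\ge 3$ rather than producing an outright contradiction, so the argument stops exactly at $p+x^2=2\ell^2$. First I would record the local data: since $p\equiv 1\pmod 8$ we have $p\equiv 1\pmod 4$, so $d_{\mathbb K}=-4p$ and $2$ is ramified. As $\ell\le\sqrt p$ is a non inert odd prime, Lemma \ref{lnotinert} supplies an odd integer $x$ with $1\le x\le\ell\le\sqrt p$ and $\ell\mid p+x^2$. Because $x$ is odd and $p\equiv 1\pmod 8$, we get $p+x^2\equiv 2\pmod 8$, so $2\parallel p+x^2$. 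Since $M_{odd}(p)\le 2$ and both $2$ and $\ell$ divide $p+x^2$, its only two prime divisors are $2$ and $\ell$, whence $p+x^2=2\ell^a$ for some $a\ge 1$. The size estimate $2\ell^a=p+x^2\ge p+1>2\sqrt p\ge 2\ell$ forces $a\ge 2$, and $\ell\nmid x$, since otherwise $\ell^2\mid x^2$ together with $\ell^2\mid 2\ell^a$ (as $a\ge 2$) would give the impossible $\ell^2\mid p$.

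It remains to prove $a=2$, and I would argue by contradiction, assuming $a\ge 3$. Set $x':=2\ell-x$, which is odd and satisfies $x'\ge\ell>0$. Writing $x=\sqrt{2\ell^a-p}$ we have $x'=f(\ell)$ with $f(t):=2t-\sqrt{2t^a-p}$; the hypothesis $a\ge 3$ makes $f'(t)=2-at^{a-1}/\sqrt{2t^a-p}\le 0$ for $t\ge 1$, since $a^2t^{a-2}\ge 9>8$ gives $a^2t^{2a-2}\ge 8t^a\ge 8t^a-4p$, which is the squared form of $f'(t)\le 0$. Letting $t_0:=((p+1)/2)^{1/a}\ge 1$ satisfy $2t_0^a-p=1$, the bound $x\ge 1$ gives $\ell\ge t_0$, so monotonicity yields $x'=f(\ell)\le f(t_0)=2t_0-1\le 2((p+1)/2)^{1/3}-1\le\sqrt p$. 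Thus $x'$ is an admissible odd value with $1\le x'\le\sqrt p$, so the hypothesis $M_{odd}(p)\le 2$ applies to it.

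Finally I would run $x'$ through the hypothesis. From $p+x'^2=2\ell^a+4\ell^2-4\ell x\equiv -4\ell x\pmod{\ell^2}$ and $\ell\nmid x$ we obtain $\ell\parallel p+x'^2$, while $p+x'^2\equiv 2\pmod 8$ gives $2\parallel p+x'^2$. Since $\omega(p+x'^2)\le 2$, this forces $p+x'^2=2\ell$, and hence the contradiction $\ell=(p+x'^2)/2\ge(p+1)/2>\sqrt p\ge\ell$. Therefore $a=2$, that is $p+x^2=2\ell^2$, as claimed. The one genuinely delicate point is the calculus estimate guaranteeing $x'\le\sqrt p$, which is exactly what licenses applying $M_{odd}(p)\le 2$ to the descended value; everything else is congruence bookkeeping. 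Conceptually, the reason the descent halts at $p+x^2=2\ell^2$ rather than at a contradiction, as in the case $p\equiv 5\pmod 8$, is that $2\ell^2\equiv 2\equiv p+x^2\pmod 8$, so the modulo-$8$ argument no longer excludes $a=2$.
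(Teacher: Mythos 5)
Your proof is correct and follows essentially the same descent as the paper: produce $x$ with $p+x^2=2\ell^a$, rule out $a\ge 3$ by passing to $x'=2\ell-x$, checking $x'\le\sqrt p$, and forcing the contradiction $p+x'^2=2\ell$ with $\ell>\sqrt p$. The only cosmetic differences are that the paper bounds $x'$ via $x^2\le p-8$ and $\ell\le(p-4)^{1/3}$ rather than your monotone-function estimate (which is the one the paper itself uses in the analogous $d\equiv 3,5\pmod 8$ cases), and it deduces $b=1$ from the subtraction identity $2(\ell^b-\ell^a)=4\ell(\ell-x)$ instead of your direct congruence $p+x'^2\equiv-4\ell x\pmod{\ell^2}$.
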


\begin{proof}
There exists $x\in {\mathbb Z}$ odd with $1\leq x\leq\ell\leq\sqrt{p}$ such that $\ell$ divides $p+x^2$, 
by Lemma \ref{lnotinert}. 
Since $p+x^2\equiv 2\pmod4$, $\ell$ divides $p+x^2$ and $\omega(p+x^2)\leq M_{odd}(p)\leq 2$ 
we have 
$$\hbox{$p+x^2=2\ell^a$ for some $a\geq 1$.}$$
Since $2\ell^a =p+x^2\geq p+1>2\sqrt p\geq 2\ell$, we have $a\geq 2$.
We claim that $a=2$, i. e. that 
$$p+x^2=2\ell^2.$$
Indeed, assume that $a\geq 3$ and let us arrive at some contradiction. 
Since $x$ is odd, $p\equiv 1\pmod 8$ and $x^2<p$, 
we have $x^2\leq p-8$ 
and $2(p-4)\geq p+x^2=2\ell^a\geq 2\ell^3$. 
Hence, $\ell\leq (p-4)^{1/3}$.
If follows that the odd positive integer $$x':=2\ell-x$$ satisfies $x'^2<p$ 
(since $1\leq x\leq \ell$ then $0\leq x'=2\ell-x\leq 2\ell-1\leq 2(p-4)^{1/3}-1<\sqrt{p}$ 
as $f(t)=\sqrt{t}-(2(t-4)^{1/3}-1)\geq 0$ for $t\geq 4$). 
Hence $\omega(p+x'^2)\leq 2$ and $p+x'^2=2\ell^b$ for some $b\geq 1$. 
Therefore, we have 
$2(\ell^b-\ell^a) 
=(p+x'^2)-(p+x^2)
=(2\ell-x)^2-x^2 
=4\ell(\ell-x).$ 
Since $\ell\neq p$, we have $\ell\nmid x$. 
Finally, $\ell\mid \ell^b-\ell^a$ and $\ell^2\nmid \ell^b-\ell^a=\ell(\ell-x)$ 
give $b=1$ and the contradiction $p+1\leq p+x'^2=2\ell\leq 2\sqrt{p}$.
\end{proof}

We can now give a neat proof of the second assertion of \cite[Theorem 1.1]{GicaIndMath}:

\noindent\frame{\vbox{
\begin{proposition}\label{cased=p=1mod8}
Let $p\equiv 1\pmod 8$ be a prime. 
Set ${\mathbb K}={\mathbb Q}(\sqrt{-p})$, an imaginary quadratic number field 
of discriminant $d_{\mathbb K} =-4p$ and ring of algebraic integers ${\mathbb Z}[\sqrt{-p}]$.
If $M_{odd}(p)\leq 2$, then $h_{\mathbb K} =4$ and ${\mathcal Cl}_{\mathbb K}$ is cyclic. 
Therefore, $M_{odd}(p)\leq 2$ if and only if $p\in\{17,73,97,193\}$.
\end{proposition}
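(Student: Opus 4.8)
The plan is to compute $\mathcal{Cl}_{\mathbb K}$ explicitly under the hypothesis $M_{odd}(p)\leq 2$, using Lemma \ref{Minkowski} to reduce to small prime generators and Lemma \ref{Lemmacased=p=1mod8} to pin down the relations among them. First I would invoke Lemma \ref{Minkowski}: since $d_{\mathbb K}=-4p$, the group $\mathcal{Cl}_{\mathbb K}$ is generated by the class $[\mathcal Q_2]$ of the ramified prime above $2$ together with the classes $[\mathcal Q_\ell]$ of the odd non inert primes $\ell\leq\sqrt p$. Genus theory gives that the $2$-rank of $\mathcal{Cl}_{\mathbb K}$ is $\omega(d_{\mathbb K})-1=\omega(4p)-1=1$, so the $2$-torsion subgroup $\mathcal{Cl}_{\mathbb K}[2]$ has order $2$; moreover $[\mathcal Q_2]\neq 1$, since $\mathcal Q_2=(\alpha)$ would force $N(\alpha)=a^2+pb^2=2$ with $p\geq 17$, which is impossible. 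As $(2)=\mathcal Q_2^2$ we also have $[\mathcal Q_2]^2=1$, so $\mathcal{Cl}_{\mathbb K}[2]=\{1,[\mathcal Q_2]\}$ and $[\mathcal Q_2]$ has order exactly $2$.

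Second, for every odd non inert prime $\ell\leq\sqrt p$ I would turn Lemma \ref{Lemmacased=p=1mod8} into an ideal relation. That lemma yields an integer $x$ with $x^2+p=2\ell^2$, so the ideal $(x+\sqrt{-p})$ has norm $2\ell^2$. Since $\ell\nmid p$ the prime $\ell$ splits, say $(\ell)=\mathcal Q_\ell\mathcal Q_\ell'$, and $\ell\nmid x$ (otherwise $\ell\mid p$); therefore only one of $\mathcal Q_\ell,\mathcal Q_\ell'$ divides $(x+\sqrt{-p})$, and comparing norms forces $(x+\sqrt{-p})=\mathcal Q_2\mathcal Q_\ell^2$. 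Passing to ideal classes gives the key relation $[\mathcal Q_\ell]^2=[\mathcal Q_2]^{-1}=[\mathcal Q_2]$, valid for every such $\ell$.

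Third comes the structural conclusion. All the generators $[\mathcal Q_\ell]$ share the same square $[\mathcal Q_2]$, so for any two of them the quotient lies in $\mathcal{Cl}_{\mathbb K}[2]=\{1,[\mathcal Q_2]\}=\{1,[\mathcal Q_\ell]^2\}$; in either case $[\mathcal Q_{\ell'}]\in\langle[\mathcal Q_\ell]\rangle$. Since $[\mathcal Q_2]=[\mathcal Q_\ell]^2$ is itself a power of $[\mathcal Q_\ell]$, the whole group collapses to $\mathcal{Cl}_{\mathbb K}=\langle[\mathcal Q_\ell]\rangle$, which is cyclic; and because $[\mathcal Q_\ell]^2=[\mathcal Q_2]\neq 1$ while $[\mathcal Q_\ell]^4=[\mathcal Q_2]^2=1$, its order is exactly $4$, whence $h_{\mathbb K}=4$. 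This argument presupposes that at least one odd non inert prime $\ell\leq\sqrt p$ exists; if none did, $\mathcal{Cl}_{\mathbb K}=\langle[\mathcal Q_2]\rangle$ would have order at most $2$. Ruling out this degenerate case is the delicate point, and I would settle it by the classical fact (R\'edei--Reichardt, i.e. the computation of the $4$-rank for the discriminant $-4p=(-4)(p)$) that $p\equiv 1\pmod 8$ forces $4\mid h_{\mathbb K}$, which is incompatible with $h_{\mathbb K}\leq 2$.

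Finally, having established $h_{\mathbb K}=4$ with $\mathcal{Cl}_{\mathbb K}$ cyclic, I would feed the discriminant shape $d_{\mathbb K}=-4p$, $p\equiv 1\pmod 8$ prime, into Watkins' determination \cite{Watkins} of the imaginary quadratic fields of class number $4$; this leaves only finitely many candidates, and a direct verification of $M_{odd}(p)\leq 2$ on them yields exactly $p\in\{17,73,97,193\}$, which proves the equivalence. The main obstacle throughout is guaranteeing the existence of the small non inert prime needed to produce an element of order $4$ (equivalently, excluding $h_{\mathbb K}\leq 2$); the norm factorization and the $2$-rank bookkeeping are routine by comparison.
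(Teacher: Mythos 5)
Your proposal is correct and follows essentially the same route as the paper: both rest on Lemma \ref{Minkowski} to reduce to small non inert primes, on Lemma \ref{Lemmacased=p=1mod8} to get $p+x^2=2\ell^2$ and hence the factorization $(x+\sqrt{-p})={\mathcal Q}_2{\mathcal Q}_\ell^2$, and on genus theory (your R\'edei--Reichardt remark is just the paper's observation that $\left(\frac{2}{p}\right)=+1$ makes $[{\mathcal Q}_2]$ a square, forcing the cyclic $2$-Sylow subgroup to have order at least $4$). Your final bookkeeping, showing all generators lie in a single cyclic group of order $4$, and your explicit treatment of the degenerate case are only cosmetic variants of the paper's argument that ${\mathcal Cl}_{\mathbb K}$ equals the order-$4$ subgroup $H_4$.
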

}}

\begin{proof}
The $2$-rank of ${\mathcal Cl}_{\mathbb K}$ is equal to $\omega(d_{\mathbb K})-1=1$. 
Hence, its $2$-Sylow subgroup is cyclic 
and the order $h_4$ of the subgroup $H_4$ of ${\mathcal Cl}_{\mathbb K}$ 
of its ideal classes of order dividing $4$. 
Now, $(2)={\mathcal Q}_2^2$ is totally ramified in ${\mathbb K}$. 
Since $p>2$, the equation $x^2+py^2=2$ has no solution in rational integers 
and ${\mathcal Q}_2$ is not principal.
Hence, its ideal class $[{\mathcal Q}_2]$ is of order $2$ in ${\mathcal Cl}_{\mathbb K}$. 
Since the Legendre's symbol $\left (\frac{2}{p}\right)$ is equal to $+1$, 
by genus theory $[{\mathcal Q}_2]$ is a square in ${\mathcal Cl}_{\mathbb K}$. 
Therefore, $h_4=4$.\\
Now, assume that $M_{odd}(p)\leq 2$. 
Let $\ell\geq 3$ be a prime non-inert in ${\mathbb K}$ such that $\ell\leq\sqrt{p}$. 
We claim that the ideal classes of the prime ideals above $\ell$ are in $H_4$, 
which implies the desired result by Lemma \ref{Minkowski}.
Indeed, there exists $x\in {\mathbb Z}$ 
such that $p+x^2 =2\ell^2$, 
by Lemma \ref{Lemmacased=p=1mod8}.
Therefore, the principal and primitive ideal $(x+\sqrt{-p})$ 
is a product ${\mathcal Q}_2{\mathcal Q}_\ell^2$, 
where ${\mathcal Q}_\ell$ is one of the two prime ideals of ${\mathbb K}$ above $\ell$. 
Hence, ${\mathcal Q}_\ell^4$ is principal 
and the ideal classes of the prime ideals above $\ell$ are in $H_4$.\\
The last assertion follows from \cite{Watkins}.
\end{proof}

\section{Proof of Theorem \ref{mainthimpair7mod8}}
Theorem \ref{mainthimpair7mod8} follows from the following Theorems \ref{d=pq=7mod8} and \ref{d=p=7mod8} 
and from the Siegel-Tatuzwa theorem, e.g. see \cite{{Tatuzawa}}, \cite{LouCollMath108} 
and \cite[3. Proof of Theorem 3]{LouJNT129}. 
Notice that A. Gica proved that in this case the Ono invariant of ${\mathbb K}$ is equal to $h_{\mathbb K}$ 
and referred to \cite{LouJNT129} to settle this case.

\begin{lemma}\label{stepdequiv7pmod8}
Let $d\equiv 7\pmod 8$ be a square-free positive integer. 
Set ${\mathbb K} ={\mathbb Q}(\sqrt{-d})$, 
an imaginary quadratic number field of discriminant $d_{\mathbb K} =-d\equiv 1\pmod 8$ 
in which $2$ splits completely into a product of two distinct prime ideals. 
Let ${\mathcal Cl}_{\mathbb K}^{(2)}$ denote the cyclic subgroup of ${\mathcal Cl}_{\mathbb K}$ 
generated by any one of these two prime ideals.
Assume that $M_{odd}(d)\leq 2$. 
If $\ell\leq\sqrt{d}$ is an odd prime not inert in ${\mathbb K} ={\mathbb Q}(\sqrt{-d})$ 
then the ideal classes of the prime ideals of ${\mathbb K}$ above $\ell$ 
are in ${\mathcal Cl}_{\mathbb K}^{(2)}$. 
By Lemma \ref{MinkowskisqrtdK/3} it follows that ${\mathcal Cl}_{\mathbb K} ={\mathcal Cl}_{\mathbb K}^{(2)}$.
\end{lemma}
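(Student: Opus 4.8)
The plan is to show, for each odd non-inert prime $\ell\le\sqrt d$, that one of the two prime ideals above $\ell$ (hence, by conjugation, both) has its class in ${\mathcal Cl}_{\mathbb K}^{(2)}$; the identity ${\mathcal Cl}_{\mathbb K}={\mathcal Cl}_{\mathbb K}^{(2)}$ will then drop out of Lemma~\ref{MinkowskisqrtdK/3}. First I would invoke Lemma~\ref{lnotinert} to produce an odd $x$ with $1\le x\le\ell\le\sqrt d$ and $\ell\mid d+x^2$. Since $d\equiv 7\pmod 8$ and $x$ is odd, $x^2\equiv1\pmod 8$, so $8\mid d+x^2$; as $\ell$ is odd and $\omega(d+x^2)\le M_{odd}(d)\le 2$, the only primes dividing $d+x^2$ are $2$ and $\ell$, whence $d+x^2=2^{\alpha}\ell^{a}$ with $\alpha\ge 3$ and $a\ge 1$. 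The whole argument reduces to producing an odd integer $x''$ with $1\le x''\le\sqrt d$ for which $\ell$ divides $d+x''^2$ exactly to the first power.

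Granting such an $x''$, the ideal-theoretic conclusion is immediate. Write $d+x''^2=2^{\gamma}\ell$ with $\gamma\ge 3$, so that the principal ideal $(x''+\sqrt{-d})$ has norm $2^{\gamma}\ell$. Because $d_{\mathbb K}\equiv1\pmod 8$ the prime $2$ splits, say $(2)={\mathcal Q}_2{\mathcal Q}_2'$ with $[{\mathcal Q}_2']=[{\mathcal Q}_2]^{-1}$, while $\ell\nmid x''$ forces $\ell$ to split as well, so exactly one prime ${\mathcal Q}_\ell$ above $\ell$ divides $(x''+\sqrt{-d})$, and to the first power since $\ell$ exactly divides the norm $2^{\gamma}\ell$. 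Comparing $2$-parts, $(x''+\sqrt{-d})={\mathcal Q}_2^{\,i}({\mathcal Q}_2')^{\gamma-i}{\mathcal Q}_\ell$ for some $0\le i\le\gamma$, and triviality of this principal class yields $[{\mathcal Q}_\ell]=[{\mathcal Q}_2]^{\,\gamma-2i}\in{\mathcal Cl}_{\mathbb K}^{(2)}$; then $[{\mathcal Q}_\ell']=[{\mathcal Q}_\ell]^{-1}\in{\mathcal Cl}_{\mathbb K}^{(2)}$ as well. If instead $\ell$ is ramified then $\ell\mid x$, a single prime ${\mathcal Q}_\ell$ lies above $\ell$, and the identical norm count gives the same formula $[{\mathcal Q}_\ell]=[{\mathcal Q}_2]^{\,\gamma-2i}\in{\mathcal Cl}_{\mathbb K}^{(2)}$.

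It remains to produce $x''$, which is the crux. If $a=1$ I simply take $x''=x$. If $a\ge 2$, then $\ell^2\mid d+x^2$ together with $d$ square-free forces $\ell\nmid x$ (otherwise $\ell^2\mid d$), so $\ell$ splits; I then set $x':=2\ell-x$, which is odd and satisfies $x'\ge\ell>0$. Reducing modulo $\ell^2$ gives $d+x'^2=2^{\alpha}\ell^a+4\ell(\ell-x)\equiv-4\ell x\pmod{\ell^2}$ with $\ell\nmid 4x$, so $\ell$ divides $d+x'^2$ exactly once, as needed. The delicate point is the bound $x'\le\sqrt d$: writing $x'=f(\ell)$ with $f(t):=2t-\sqrt{2^{\alpha}t^{a}-d}$, a computation of $f'(t)$ shows $f'(t)\le 0$ for $t\ge1$ (the condition $f'(t)\le0$ reduces to $2^{\alpha}a^2t^{a-2}\ge 16$, which holds since $\alpha\ge3$, $a\ge2$, $t\ge1$), so $f$ is nonincreasing; since the positive real number $t_0$ defined by $2^{\alpha}t_0^{a}=d+1$ satisfies $t_0\le\ell$, we get $x'=f(\ell)\le f(t_0)=2t_0-1=2\bigl((d+1)/2^{\alpha}\bigr)^{1/a}-1$, which for $\alpha\ge3$, $a\ge2$ is at most $\sqrt{(d+1)/2}-1\le\sqrt d$. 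Thus $x''=x'$ works. This monotonicity-plus-bound step is the main obstacle.

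Finally, Lemma~\ref{MinkowskisqrtdK/3} shows that ${\mathcal Cl}_{\mathbb K}$ is generated by the classes of the non-inert prime ideals of prime norm $\le\sqrt{\vert d_{\mathbb K}\vert/3}=\sqrt{d/3}\le\sqrt d$. The primes above $2$ generate ${\mathcal Cl}_{\mathbb K}^{(2)}$ by definition, and every odd such $\ell\le\sqrt d$ contributes classes lying in ${\mathcal Cl}_{\mathbb K}^{(2)}$ by the above; hence all generators lie in ${\mathcal Cl}_{\mathbb K}^{(2)}$, giving ${\mathcal Cl}_{\mathbb K}={\mathcal Cl}_{\mathbb K}^{(2)}$.
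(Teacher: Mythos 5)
Your proof is correct and follows essentially the same route as the paper: replace $x$ by $x'=2\ell-x$ to force $\ell\,\Vert\, d+x'^2$, then read off $[{\mathcal Q}_\ell]$ as a power of $[{\mathcal Q}_2]$ from the factorization of a principal ideal of norm $2^\gamma\ell$, and finish with Lemma~\ref{MinkowskisqrtdK/3}. The only real divergence is your calculus argument for $x'\leq\sqrt d$, which works but is heavier than the paper's one-line estimate $x'^2\leq 4\ell^2\leq\tfrac{1}{2}2^a\ell^b=\tfrac{d+x^2}{2}\leq d$.
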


\begin{proof}
There exists $x\in {\mathbb Z}$ odd with $1\leq x\leq\ell\leq\sqrt{d}$ 
such that $\ell$ divides $d+x^2$, 
by Lemma \ref{lnotinert}.
Since $d+x^2\equiv 0\pmod 8$, $\ell\mid d+x^2$ and $\omega(d+x^2)\leq M_{odd}(d)\leq 2$ we have 
$$\hbox{$d+x^2=2^a\ell^b$ for some $a\geq 3$ and $b\geq 1$ and some odd $x$ with $1\leq x\leq\sqrt{d}$.}$$
Suppose that $b\geq 2$. 
Then $\ell\nmid x$ (as $d$ is square-free).
Then 
$$x':=2\ell-x\geq\ell\geq 1$$ 
is odd and 
$x'^2\leq 4\ell^2\leq\frac{1}{2}2^a\ell^b =\frac{d+x^2}{2}\leq\frac{d+\ell^2}{2}\leq d$. 
Moreover, 
$d+x'^2=d+x^2+4\ell^2-4\ell x =2^a\ell^b+4\ell^2-4\ell x\equiv -4\ell x\pmod{4\ell^2}$. 
Recalling that $\ell\nmid x$, 
we obtain , $8\mid d+x'^2$, $\ell\mid d+x'^2$ and $\ell^2\nmid d+x'^2$. 
Since $M_{odd}(d)\leq 2$, we get $d+x'^2=2^{a'}\ell$ for some $a'\geq 3$. 
Consequently, whether $b=1$ or $b\geq 2$ there exits $x''\in {\mathbb Z}$ odd and $a''\geq 3$ 
such that $d+x''^2=2^{a''}\ell$, 
i.e. such that the algebraic integer $\alpha:=(x''+\sqrt{-d})/2\in {\mathbb K}$ is of norm $2^{a''-2}\ell$. 
The principal ideal $(\alpha)$ being primitive, we have $(\alpha) ={\mathcal Q}_2^{a-2}{\mathcal Q}_\ell$ 
where the prime ideal ${\mathcal Q}_2$ is above $2$ and the prime ideal ${\mathcal Q}_\ell$ is above $\ell$. 
Hence, in ${\mathcal Cl}_{\mathbb K}$ 
we have $[{\mathcal Q}_\ell] =[{\mathcal Q}_2]^{-(a-2)} =[{\mathcal Q}_2']^{a-2}$. 
The desired result follows.
\end{proof}

\noindent\frame{\vbox{
\begin{proposition}\label{d=pq=7mod8}
Let $d=pq\equiv 7\pmod 8$ be a product of two distinct odd primes. 
Set ${\mathbb K}={\mathbb Q}(\sqrt{-d})$, 
an imaginary quadratic field of discriminant $d_{\mathbb K}=-d\equiv 1\pmod 8$ 
in which the prime $2$ 
splits into a product of two distinct prime ideals of norm $2$.
Assume that $M_{odd}(d)\leq 2$. 
Then ${\mathcal Cl}_{\mathbb K}$ is cyclic 
and generated by the ideal class of any of the two prime ideals above the prime $2$. 
Moreover, $h_{\mathbb K}$ is not large, namely 
$$h_{\mathbb K}
\leq\frac{\log\vert d_{\mathbb K}\vert}{\log 2}-2.$$
\end{proposition}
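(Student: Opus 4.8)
The first assertion requires no new work: since $d=pq\equiv 7\pmod 8$ is square-free and $M_{odd}(d)\le 2$, Lemma \ref{stepdequiv7pmod8} gives ${\mathcal Cl}_{\mathbb K}={\mathcal Cl}_{\mathbb K}^{(2)}$, the cyclic group generated by the class $[{\mathcal Q}_2]$ of either prime ideal ${\mathcal Q}_2$ above $2$; the two choices are conjugate, hence generate the same cyclic group. Write $h:=h_{\mathbb K}=\mathrm{ord}([{\mathcal Q}_2])$. The whole difficulty is therefore the size estimate.

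For the bound, the plan is to exhibit a principal power ${\mathcal Q}_2^{k}$ with $k$ of order $\log|d_{\mathbb K}|$, which forces $h\mid k$ and hence $h\le k$. First I would record the shape of the smallest principal power: if ${\mathcal Q}_2^{h}=(\alpha)$ then $\alpha$ is primitive of norm $2^{h}$, so $\alpha=\frac{x+y\sqrt{-d}}2$ with $x^2+dy^2=2^{h+2}$. Here $y\ne 0$, for otherwise $(\alpha)$ would be generated by a rational integer and could not be the primitive ideal ${\mathcal Q}_2^{h}$ (recall that $2$ splits, so $(2)={\mathcal Q}_2{\mathcal Q}_2'$ does not divide ${\mathcal Q}_2^{h}$). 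This already yields the easy lower estimate $2^{h+2}=x^2+dy^2\ge d$; the real content is the matching upper estimate $2^{h}\le 2|d_{\mathbb K}|$.

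To obtain it, I would look for an odd integer $x$ with $1\le x\le\sqrt d$ for which $d+x^2$ is a pure power of $2$: since $d\equiv 7\pmod 8$ forces $8\mid d+x^2$, writing $d+x^2=2^{a}$ gives $(\tfrac{x+\sqrt{-d}}2)={\mathcal Q}_2^{a-2}$ (primitive, up to conjugation), a principal power, whence $h\mid a-2$ and $h\le a-2\le\frac{\log(2d)}{\log 2}-2$. When no in-range $x$ produces a pure power of $2$ (as happens for instance at $d=47$, where the witness $d+9^2=2^{7}$ has $9>\sqrt{47}$), I would instead combine two relations: for odd $x_1,x_2\le\sqrt d$ the hypothesis $M_{odd}(d)\le 2$ forces $d+x_i^2=2^{a_i}\ell_i^{b_i}$, and since every class $[{\mathcal Q}_\ell]$ of a non-inert odd prime $\ell\le\sqrt d$ lies in $\langle[{\mathcal Q}_2]\rangle$ (Lemma \ref{stepdequiv7pmod8}), two values sharing the same $\ell$ eliminate $[{\mathcal Q}_\ell]$ and leave a pure relation $[{\mathcal Q}_2]^{m}=1$ with $m$ controlled by the $a_i,b_i$. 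The reflection $x\mapsto 2^{j}-x$ used in Propositions \ref{cased=p=3mod8}--\ref{cased=p=1mod8} is the tool I would use both to pull an out-of-range pure-power witness into $[1,\sqrt d]$ and to normalise the $2$-adic valuations, keeping the resulting exponents of size $O(\log d)$.

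The main obstacle is exactly this upper estimate, and it is structural rather than computational: for a general imaginary quadratic field the order of $[{\mathcal Q}_2]$ is typically of size $\sqrt{|d_{\mathbb K}|}$, so the hypothesis $M_{odd}(d)\le 2$ must enter in an essential way. Its leverage is that it forces every $d+P^2$ with $P$ odd and $|P|\le\sqrt d$ — in particular the data of every reduced ideal $Q{\mathbb Z}+\frac{P+\sqrt{-d}}2{\mathbb Z}$, for which $4QQ'=d+P^2$ — to satisfy $\omega(d+P^2)\le 2$, hence to be $\{2,\ell\}$-smooth. Propagating this smoothness through the cyclic structure $\langle[{\mathcal Q}_2]\rangle$ is what should cap $h$ at a quantity of order $\frac{\log|d_{\mathbb K}|}{\log 2}$, as in the displayed estimate; making this propagation uniform (independently of which odd prime $\ell$ occurs) is the step I expect to be delicate.
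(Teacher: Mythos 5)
Your reduction of the first assertion to Lemma \ref{stepdequiv7pmod8} is exactly what the paper does, and that part is complete. The problem is the size bound, which you yourself identify as ``the whole difficulty'': what you offer for it is a strategy sketch, not a proof, and the sketch misses the one idea that makes this case easy, namely that $d=pq$ is composite. The paper applies $M_{odd}(d)\le 2$ to the single value $x=p$ (with $3\le p<q$, so $p\le\sqrt d$): since $d+p^2=p(p+q)$ is divisible by $8$ (as $d\equiv 7\pmod 8$) and by $p$, the condition $\omega(p(p+q))\le 2$ forces $p+q=2^a$ for some $a\ge 3$. The identity $d+\left(\frac{q-p}{2}\right)^2=\left(\frac{q+p}{2}\right)^2=2^{2a-2}$ then hands you, for free, an algebraic integer of ${\mathbb K}$ whose norm is a pure power of $2$, hence a primitive principal ideal equal to a power of ${\mathcal Q}_2$ with exponent comparable to $2a\approx 2\log_2(p+q)=O(\log d)$; combined with the cyclicity from Lemma \ref{stepdequiv7pmod8}, this yields the stated logarithmic bound. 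Note that this witness need not lie in $[1,\sqrt d]$ and does not have to: the hypothesis $M_{odd}(d)\le2$ is invoked only at $x=p$.

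By contrast, your fallback plan (``combine two relations sharing the same odd prime $\ell$'', then ``normalise the $2$-adic valuations by reflections $x\mapsto 2^j-x$'') has no mechanism guaranteeing that two admissible values of $x$ involve the same $\ell$, nor any control on the exponent $m$ in the resulting relation $[{\mathcal Q}_2]^m=1$ --- and you concede as much when you call the required uniformity ``delicate''. Your example $d=47$ is also out of scope: $47$ is prime, so it belongs to Proposition \ref{d=p=7mod8}, where producing a pure-power-of-$2$ witness genuinely requires solving $p+x^2=2y^2$ via the arithmetic of ${\mathbb Z}[\sqrt 2]$ (Lemma \ref{existence}) together with Gica's trick. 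This suggests you were attacking the harder prime case rather than exploiting the factorization $d=pq$ that the proposition hands you. As written, the bound $h_{\mathbb K}\le\frac{\log\vert d_{\mathbb K}\vert}{\log 2}-2$ is not established by your argument.
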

}}

\begin{proof}
We may assume that $3\leq p<q$. 
Hence $p\leq\sqrt{d}$ and $\omega(p(q+p))=\omega(d+p^2)\leq M_{odd}(d)\leq 2$, 
which implies
$$\hbox{$8\leq q+p=2^a$ for some $a\geq 3$.}$$
Therefore, 
$d+((q-p)/2)^2 =((q+p)/2)^2 =2^{2a-2}$ 
and $\alpha =((q+p)/2+\sqrt{-d})/2$ is an algebraic integer of ${\mathbb K}$ of norm $2^{2a-2}$.
Therefore, the primitive principal ideal $(\alpha) ={\mathcal Q}_2^{2a-2}$ 
is a perfect power of one of the two prime ideals of ${\mathbb K}$ above the spliting prime $2$. 
Hence $[{\mathcal Q}_2]$ is of order dividing $2a-2$. 
By Lemma \ref{stepdequiv7pmod8} it follows that $h_{\mathbb K}$ divides $2a-2$. 
Noticing that $2^{2a-4} =((p+q)/4)^2\leq pq/4=d/4=\vert d_{\mathbb K}\vert/4$, 
we get the desired bound on $h_{\mathbb K}$.
\end{proof}

For the case $d=p\equiv 7\pmod 8$ we will need the following result:

\begin{lemma}\label{existence}
Let $p\equiv\pm 1\pmod 8$ be a prime. 
There exists a pair of integers $(x,y)\in [0,\sqrt{p})\times [0,\sqrt{p})$ such that $p+x^2=2y^2$. 
\end{lemma}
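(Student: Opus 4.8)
The plan is to recognize the equation $p+x^2=2y^2$, i.e. $2y^2-x^2=p$, as the representation of $-p$ by the norm form $N(a+b\sqrt2)=a^2-2b^2$ of the real quadratic field $\mathbb{Q}(\sqrt2)$, and to produce a \emph{small} solution by reduction with the fundamental unit. First I would note that the two size constraints collapse to one: once $x^2<p$, the relation $2y^2=p+x^2<2p$ forces $y^2<p$ automatically. So it suffices to exhibit an integer solution with $0\le x<\sqrt p$ (and $y\ge 1$, which is clear since $2y^2=p+x^2>0$).

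Next, the existence of some solution. Since $p\equiv\pm1\pmod 8$ we have $\left(\frac 2p\right)=1$, so $p$ splits in the ring $\mathbb{Z}[\sqrt2]$, which is a PID. Writing the prime ideal above $p$ as $(a+b\sqrt2)$ gives $a^2-2b^2=\pm p$; and since the fundamental unit $\epsilon=1+\sqrt2$ has norm $-1$, multiplying by $\epsilon$ if necessary I may assume $a^2-2b^2=-p$, i.e. $2b^2-a^2=p$. Taking absolute values yields a solution $(x,y)=(|a|,|b|)$ with $x\ge 0$, $y\ge 1$ (note $b\neq 0$ since $p>0$).

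The heart of the argument is a descent that shrinks such a solution, driven by $\epsilon^{-2}=3-2\sqrt2$. Among all solutions with $x\ge 0$, $y\ge 1$ I would pick one, say $(x_0,y_0)$, with $y_0$ minimal, and claim $y_0^2<p$ (whence $x_0^2=2y_0^2-p<p$, finishing the proof). Suppose instead $y_0^2>p$ (equality is impossible as $p$ is prime). Multiplying $\alpha_0=x_0+y_0\sqrt2$ by $3-2\sqrt2$ yields $\alpha_1=(3x_0-4y_0)+(3y_0-2x_0)\sqrt2\in\mathbb{Z}[\sqrt2]$ of the same norm $-p$, so $(|3x_0-4y_0|,\,3y_0-2x_0)$ again solves the equation. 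A short computation shows $3y_0-2x_0>0$ unconditionally (it amounts to $y_0^2+4p>0$), while $3y_0-2x_0<y_0$ is equivalent to $x_0>y_0$, i.e. to $y_0^2<x_0^2=2y_0^2-p$, i.e. exactly to the standing assumption $y_0^2>p$. This produces a solution with strictly smaller $y$, contradicting minimality.

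I expect the existence step to be routine, and the main obstacle to be the descent: choosing the right multiplier ($\epsilon^{-2}$ rather than $\epsilon^{-1}$, so as to preserve the norm $-p$) and carefully tracking signs so that the new second coordinate $3y_0-2x_0$ is both positive and strictly below $y_0$ precisely under the hypothesis $y_0^2>p$. An alternative to the minimality/descent packaging is to observe that the interval $\left(\sqrt p(\sqrt2-1),\,\sqrt p(\sqrt2+1)\right)$ has endpoints in ratio $\epsilon^2$, hence is a fundamental domain for multiplication by $\epsilon^2$ and must contain exactly one conjugate $\alpha\epsilon^{2k}$ of each orbit (no integer solution can sit on the boundary, where $x=\pm\sqrt p$); the element landing there has both coordinates below $\sqrt p$.
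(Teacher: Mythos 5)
Your proposal is correct, and its first half (splitting of $p$ in the principal ideal domain ${\mathbb Z}[\sqrt{2}]$, then using the fundamental unit $\eta=1+\sqrt{2}$ of norm $-1$ to arrange $N(\alpha)=-p$) is exactly the paper's argument. Where you go further is the descent: the paper's proof stops after producing \emph{some} solution of $x^2-2y^2=-p$ and never verifies the size constraint $0\le x<\sqrt{p}$ that is asserted in the statement and actually used later (in the proof of Proposition \ref{d=p=7mod8} one needs $x,y\in[0,\sqrt p\,]$). That constraint is not automatic --- e.g.\ for $p=7$ the solution $(x,y)=(11,8)$ obtained from $(1,2)$ by multiplying by $\eta^2$ violates it --- so a normalization by powers of $\eta^2$ is genuinely required. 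Your minimal-$y_0$ argument with the multiplier $\eta^{-2}=3-2\sqrt{2}$ supplies precisely this: the verifications that $3y_0-2x_0>0$ always (since $9y_0^2-4x_0^2=y_0^2+4p>0$) and that $3y_0-2x_0<y_0$ exactly when $y_0^2>p$ are both correct, as is the observation that $y^2<p$ follows from $x^2<p$ via $2y^2=p+x^2<2p$. So your write-up is not just a different packaging; it closes a gap the paper leaves open, at the modest cost of a few lines of sign-chasing.
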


\begin{proof}
The ring of algebraic integers of the real quadratic number field ${\mathbb Q}(\sqrt{2})$ 
is ${\mathbb Z}[\sqrt{2}]$. 
For $\alpha =x+y\sqrt{2}\in {\mathbb Z}[\sqrt{2}]$, its conjugate is $\alpha'=x-y\sqrt{2}$ 
and its norm is $N(\alpha) =\alpha\alpha'=x^2-2y^2$.
The ring ${\mathbb Z}[\sqrt{2}]$ is a principal ideal domain whose fundamental unit $\eta =1+\sqrt{2}$ 
is of norm $-1$. 
Since $p\equiv\pm 1\pmod 8$, the prime $p$ splits in ${\mathbb Z}[\sqrt{2}]$
as a product of two prime principal ideals of norm $2$, say $(p)={\mathcal PP'}$ 
with ${\mathcal P} =(\alpha)$, ${\mathcal P'} =(\alpha')$, 
$p=N({\mathcal P}) =\vert N(\alpha)\vert$ 
and $p=N({\mathcal P'}) =\vert N(\alpha')\vert$. 
By changing $\alpha$ into $\alpha\eta$ if necessary, 
we may assume that $N(\alpha)=-p$
Therefore, writing $\alpha =x+y\sqrt{2}$, we have $N(\alpha) =x^2-2y^2=-p$ 
and the desired result. 
\end{proof} 

We can now prove the fourth assertion of \cite[Theorem 1.1]{GicaIndMath}: 

\noindent\frame{\vbox{
\begin{proposition}\label{d=p=7mod8}
Let $p\equiv 7\pmod 8$ be prime. 
Set ${\mathbb K}={\mathbb Q}(\sqrt{-p})$, 
an imaginary quadratic field of discriminant $d_{\mathbb K}=-p\equiv 1\pmod 8$ 
in which the prime $(2)={\mathcal Q}_2{\mathcal Q}_2'$ 
splits into a product of two distinct prime ideals of norm $2$.
Assume that $M_{odd}(p)\leq 2$. 
Then ${\mathcal Cl}_{\mathbb K}$ is cyclic 
and generated by the ideal class of any of the two prime ideals above the prime $2$. 
Moreover, $h_{\mathbb K}$ is not large, namely 
\begin{equation}\label{boundhK}
h_{\mathbb K}
\leq\frac{\log\vert d_{\mathbb K}\vert}{\log 2}+1
\end{equation}
\end{proposition}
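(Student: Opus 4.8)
The plan is to split the statement into its two assertions—that ${\mathcal Cl}_{\mathbb K}$ is cyclic generated by $[{\mathcal Q}_2]$, and the upper bound \eqref{boundhK}—and to dispatch the first immediately from the machinery already in place. Since $p\equiv 7\pmod 8$ is prime, $d=p$ is square-free and $\equiv 7\pmod 8$, so Lemma \ref{stepdequiv7pmod8} applies verbatim: under $M_{odd}(p)\le 2$ every odd non-inert prime $\ell\le\sqrt p$ has $[{\mathcal Q}_\ell]\in{\mathcal Cl}_{\mathbb K}^{(2)}$, and by Lemma \ref{MinkowskisqrtdK/3} such primes generate ${\mathcal Cl}_{\mathbb K}$, whence ${\mathcal Cl}_{\mathbb K}={\mathcal Cl}_{\mathbb K}^{(2)}=\langle[{\mathcal Q}_2]\rangle$ is cyclic. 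I would also record at the outset that, since $d_{\mathbb K}=-p$ has a single prime divisor, genus theory gives $2$-rank $\omega(d_{\mathbb K})-1=0$, so $h_{\mathbb K}$ is \emph{odd} and equals the order of $[{\mathcal Q}_2]$. The whole problem then reduces to exhibiting a \emph{principal} power ${\mathcal Q}_2^{m}$ with $m$ small, for then $h_{\mathbb K}\mid m$ forces $h_{\mathbb K}\le m$.

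To manufacture such a power I would feed $p$ into Lemma \ref{existence}, obtaining $(x,y)$ with $0\le x,y<\sqrt p$ and $p+x^2=2y^2$. A parity check shows $x$ is odd (so $1\le x\le\sqrt p$ and $M_{odd}$ applies to it) and, since $p+x^2\equiv 0\pmod 8$, that $y$ is even, say $y=2^{s}t$ with $t$ odd and $s\ge 1$. The element $\beta=(x+\sqrt{-p})/2$ is then an algebraic integer, visibly \emph{primitive} (its $\omega$-coordinate is $1$), of norm $N(\beta)=(p+x^2)/4=2^{2s-1}t^2<p/2$. The hypothesis gives $\omega(2y^2)=\omega(p+x^2)\le 2$, and as $\omega(2y^2)=1+\omega(t)$ this forces $t=1$ or $t=\ell^k$ for a single odd prime $\ell$.

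In the clean case $t=1$ the argument closes at once: $(\beta)={\mathcal Q}_2^{2s-1}$ is a pure power of one prime above $2$ (by primitivity it is not divisible by $(2)={\mathcal Q}_2{\mathcal Q}_2'$), so $[{\mathcal Q}_2]^{2s-1}=1$, $h_{\mathbb K}\mid(2s-1)$, and $2^{2s-1}=N(\beta)<p/2$ yields $h_{\mathbb K}\le 2s-1<\frac{\log|d_{\mathbb K}|}{\log 2}$, comfortably inside \eqref{boundhK}. In the remaining case $(\beta)={\mathcal Q}_2^{2s-1}{\mathcal Q}_\ell^{2k}$, and $\ell^{2k}\le N(\beta)<p$ shows $\ell\le\sqrt p$ is odd and non-inert. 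Here I would eliminate ${\mathcal Q}_\ell$ exactly as in the proof of Proposition \ref{d=pq=7mod8}: the construction inside Lemma \ref{stepdequiv7pmod8}, applied to $\ell$, furnishes an odd $x''\le\sqrt p$ with $p+x''^2=2^{a''}\ell$, hence a primitive $\beta''=(x''+\sqrt{-p})/2$ with $(\beta'')={\mathcal Q}_2^{a''-2}{\mathcal Q}_\ell^{\pm 1}$, expressing $[{\mathcal Q}_\ell]$ as an explicit power of $[{\mathcal Q}_2]$. Substituting into the relation from $\beta$ cancels ${\mathcal Q}_\ell$ and leaves a principal pure power ${\mathcal Q}_2^{m}$ with $m=\lvert(2s-1)\mp 2k(a''-2)\rvert$; since $h_{\mathbb K}$ is odd while $2k(a''-2)$ is even, $m$ is odd, hence nonzero, and $h_{\mathbb K}\mid m$.

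The main obstacle is precisely this last case. First comes the bookkeeping, inherited from Proposition \ref{d=pq=7mod8}, of matching the \emph{same} prime ${\mathcal Q}_\ell$ (not its conjugate) in the two relations, which fixes the sign in $m$; the oddness of $h_{\mathbb K}$—so that $2$ is invertible modulo $h_{\mathbb K}$ and ${\mathcal Q}_\ell^{2k}$ is a perfect square of ideals—is what keeps the outcome nondegenerate. More seriously, one must upgrade $h_{\mathbb K}\mid m$ to the clean logarithmic bound \eqref{boundhK} from the sole inputs $2^{2s-1}\ell^{2k}<p/2$ and $2^{a''}\ell=p+x''^2<2p$: this quantitative control of $m$ is where I expect the real work to lie. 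I anticipate combining it with the lower bound $2^{h_{\mathbb K}}\ge p/4$—forced by $v\neq 0$ in the minimal generator $\tfrac{u+v\sqrt{-p}}{2}$ of ${\mathcal Q}_2^{h_{\mathbb K}}$, whose norm is $2^{h_{\mathbb K}}$—to confine $h_{\mathbb K}$ to the narrow window $\bigl[\tfrac{\log p}{\log 2}-2,\ \tfrac{\log p}{\log 2}+1\bigr]$ and thereby pin down \eqref{boundhK}.
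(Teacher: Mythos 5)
Your first assertion (cyclicity of ${\mathcal Cl}_{\mathbb K}$ generated by $[{\mathcal Q}_2]$) is handled exactly as in the paper, via Lemmas \ref{stepdequiv7pmod8} and \ref{MinkowskisqrtdK/3}, and your case $t=1$ coincides with the paper's case where $y/2$ is a power of $2$. The genuine gap is in the remaining case $t=\ell^k$. There you obtain $h_{\mathbb K}\mid m$ with $m=\lvert(2s-1)\mp 2k(a''-2)\rvert$, but nothing bounds $m$ by $\frac{\log p}{\log 2}+1$: from $2^{2s-1}\ell^{2k}<p/2$ and $2^{a''}\ell<2p$ one only gets $2k\leq \log_2 p/\log_2\ell$ and $a''\leq\log_2(2p)$, so $m$ can be of order $(\log p)^2$ (e.g.\ when $\ell=3$ and $s=1$). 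The divisibility $h_{\mathbb K}\mid m$ then yields only $h_{\mathbb K}=O((\log p)^2)$. Your proposed rescue — the lower bound $2^{h_{\mathbb K}}\geq p/4$ coming from $v\neq 0$ in a generator of ${\mathcal Q}_2^{h_{\mathbb K}}$ — is correct as stated but is a \emph{lower} bound on $h_{\mathbb K}$; combined with $h_{\mathbb K}\mid m$ it cannot produce the required \emph{upper} bound, since $m$ could perfectly well equal $3h_{\mathbb K}$ or $5h_{\mathbb K}$. The window argument is therefore circular: to confine $h_{\mathbb K}$ from above you must first exhibit a principal power ${\mathcal Q}_2^{n}$ with $n$ itself at most $\log_2 p+O(1)$.

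The missing idea is the paper's use of what it calls A.~Gica's trick. Writing $p+x^2=8z^2$ (so $z=y/2=2^{s-1}t$ in your notation) and assuming $z$ is not a power of $2$, one checks $0\leq 2z-x\leq\sqrt p$ and $p+(2z-x)^2=4z(3z-x)$ with $\gcd(z,3z-x)=1$; since $z$ already carries an odd prime, the hypothesis $\omega(p+(2z-x)^2)\leq 2$ forces $3z-x=2^u$ to be a pure power of $2$. Then the identity $p+(8z-3x)^2=8(3z-x)^2=2^{3+2u}$ produces directly a primitive principal ideal equal to ${\mathcal Q}_2^{2u+1}$, with $2^{1+2u}=(p+(8z-3x)^2)/4\leq 9p/4$ because $0\leq 8z-3x\leq\sqrt{8p}$. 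This gives $h_{\mathbb K}\mid 2u+1$ and $2u+1\leq\frac{\log p}{\log 2}+1$ in one stroke, bypassing entirely the need to bound your composite exponent $m$. Without this second evaluation of $\omega(p+\cdot^2)$ at the auxiliary point $2z-x$, the bound \eqref{boundhK} does not follow from your relations.
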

}}

\begin{proof}
There exist positive integers $x,y\in [0,\sqrt p]$ such that $p+x^2=2y^2$, 
by Lemma \ref{existence}. 
Since $p\equiv 7\pmod 8$, $x$ is odd, $y=2z$ is even 
and 
\begin{equation}\label{pxz}
p+x^2=8z^2.
\end{equation}
Now, there are two cases.\\
$(i)$. Suppose that $z=2^u$ is a perfect power of $2$. 
Then $$p+x^2=2^{3+2u}$$
and $\alpha =(x+\sqrt{-p})/2$ is an algebraic integer of ${\mathbb K}$ of norm $2^{2u+1}$.
Therefore, the primitive principal ideal $(\alpha) ={\mathcal Q}_2^{2u+1}$ 
is a perfect power of one of the two prime ideals of ${\mathbb K}$ above the spliting prime $2$. 
Hence $[{\mathcal Q}_2]$ is of order dividing $2u+1$. 
By Lemma \ref{stepdequiv7pmod8} it follows that $h_{\mathbb K}$ divides $2u+1$. 
Consequently, since $2^{2+2u} =(p+x^2)/2\leq p$, we get
$h_{\mathbb K}\leq 1+2u\leq \frac{\log p}{\log 2}-1$ and \eqref{boundhK} holds true.\\
$(ii)$. Suppose that $z$ is not a perfect power of $2$. 
We use A. Gica's very nice trick.
Then $2x^2\leq p+x^2=8z^2$ and $8z^2=p+x^2\leq 2p$ yield $0\leq x\leq 2z\leq\sqrt p$. 
Hence, $0\leq 2z-x\leq\sqrt{p}$, $3z-x\geq 0$ and by \eqref{pxz}, we have
$$p+(2z-x)^2
=4z(3z-x).$$ 
Clearly, $\gcd(z,3z-x)=\gcd(z,x)=1$, by \eqref{pxz}.
Since $\omega(p+(2z-x)^2)\leq M_{odd}(p)\leq 2$ and $z$ is not a power of $2$, 
it follows that $3z-x=2^u$ is a perfect power of $2$. 
Now, $p+(-3x+8z)^2=8z^2-x^2+(-3x+8z)^2=8(3z-x)^2=2^{3+2u}$, 
i.e. 
$$p+(-3x+8z)^2 =2^{3+2u}.$$ 
As in case $(i)$ we obtain that $h_{\mathbb K}$ divides $2u+1$.
Moreover, $-3x+8z =\sqrt{8p+8x^2}-3x$, by \eqref{pxz}. 
Since $x\mapsto \sqrt{8p+8x^2}-3x$ is decreasing in the range $x\geq 0$, 
we get $0\leq -3x+8z\leq\sqrt{8p}$. 
Hence, 
$2^{1+2u} =(p+(-3x+8z)^2)/4\leq 9p/4$ 
and 
$h_{\mathbb K}\leq 1+2u\leq \frac{\log p}{\log 2}+1$ and \eqref{boundhK} holds true.
\end{proof}

\section{Proof of Theorem \ref{mainthd=2p}}
\begin{lemma}\label{d=2mod4}
Let $d> 2$ with $d\equiv 2\pmod 4$ be a square-free integer. 
Assume that $M_{even}(d)\leq 2$. 
Let $\ell\leq\sqrt{d}$ be an odd prime that splits in ${\mathbb K} ={\mathbb Q}(\sqrt{-d})$. 
Then, there exists $x\in {\mathbb Z}$
such that $d+x^2=2\ell^2$. 
\end{lemma}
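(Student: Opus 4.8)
The plan is to mimic the odd-prime argument of Lemma~\ref{Lemmacased=p=1mod8}, adapting it to even $x$ and to the congruence $d\equiv 2\pmod 4$. First I would invoke Lemma~\ref{lnotinert} to produce an \emph{even} integer $x$ with $2\le x\le\ell\le\sqrt d$ and $\ell\mid d+x^2$; here $x\neq 0$ because $\ell$ splits in $\mathbb K$, so $\ell\nmid d_{\mathbb K}=-4d$ and in particular $\ell\nmid d$ (whence also $\ell\nmid x$, since $\ell\mid d+x^2$). The $2$-adic bookkeeping is the key structural input: as $d\equiv 2\pmod 4$ and $x$ is even, $d+x^2\equiv 2\pmod 4$, so $2\parallel d+x^2$; combined with $\ell\mid d+x^2$ and $\omega(d+x^2)\le M_{even}(d)\le 2$ this forces
$$d+x^2=2\ell^a\qquad(a\ge 1).$$

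Next I would pin down $a$. Comparing sizes, $2\ell^a=d+x^2\ge d+4>2\sqrt d\ge 2\ell$ (the middle inequality is $(\sqrt d-1)^2+3>0$), so $\ell^a>\ell$ and thus $a\ge 2$; the goal is to prove $a=2$. Suppose instead $a\ge 3$ and set $x':=2\ell-x$, which is even and satisfies $x'\ge\ell\ge 3$. The crucial point is to check that $x'$ lies in the admissible range $2\le x'\le\sqrt d$, for only then does $M_{even}(d)\le 2$ apply to $d+x'^2$. From $x\le\ell$ one gets $x^2=2\ell^a-d\le\ell^2$, i.e. $d\ge 2\ell^a-\ell^2$; substituting $x^2=2\ell^a-d$ into $x'^2=4\ell^2-4\ell x+x^2$, the inequality $x'^2\le d$ reduces to $2\ell^2-2\ell x+\ell^a\le d$, for which $d\ge 2\ell^a-\ell^2$ makes it enough to verify $3\ell-2x\le\ell^{a-1}$; this holds because $x\ge 0$, $\ell\ge 3$ and $a\ge 3$ give $3\ell-2x\le 3\ell\le\ell^2\le\ell^{a-1}$. (Alternatively one may argue as in Proposition~\ref{cased=p=3mod8} that $t\mapsto 2t-\sqrt{2t^a-d}$ is decreasing, but the algebraic bound above avoids any separate treatment of small $d$.)

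With $x'\le\sqrt d$ in hand, the same $2$-adic bookkeeping applied to $d+x'^2\equiv 2\pmod 4$, together with $\ell\mid d+x'^2$ and $\omega(d+x'^2)\le 2$, forces $d+x'^2=2\ell^b$ for some $b\ge 1$. Subtracting the two representations gives $\ell^b-\ell^a=2\ell(\ell-x)$. Here $x<\ell$ (as $x$ is even and $\ell$ odd), so the right-hand side is a positive integer whose $\ell$-adic valuation is exactly $1$, because $\ell$ is odd and $\ell\nmid(\ell-x)$. Positivity forces $b>a$, whence $\ell^b-\ell^a=\ell^a(\ell^{b-a}-1)$ has $\ell$-adic valuation $a\ge 3$, a contradiction. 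Therefore $a=2$ and $d+x^2=2\ell^2$, as claimed. I expect the size estimate $x'\le\sqrt d$ to be the only delicate step; the rest is the parity-and-valuation bookkeeping that already underlies Lemma~\ref{Lemmacased=p=1mod8}.
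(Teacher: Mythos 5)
Your proposal is correct and follows essentially the same route as the paper: produce an even $x$ with $2\le x\le\ell$ and $d+x^2=2\ell^a$, rule out $a\ge 3$ by reflecting to $x'=2\ell-x$, checking $x'\le\sqrt d$, and comparing $\ell$-adic valuations of $d+x'^2=2\ell^b$ against $d+x^2=2\ell^a$. The only (equally valid) variations are that you establish $x'\le\sqrt d$ by direct algebra where the paper uses monotonicity of $t\mapsto 2t-\sqrt{2t^a-d}$, and you close via the valuation mismatch ($b>a$ forces $v_\ell=a\ge 3$ while the right-hand side $2\ell(\ell-x)$ has $v_\ell=1$) where the paper first forces $b=1$ and then derives a size contradiction.
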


\begin{proof}
By Lemma \ref{lnotinert}, there exists $x\in {\mathbb Z}$ even with $0\leq x\leq \ell\leq\sqrt{d}$ 
such that $\ell$ divides $d+x^2$. 
Since $l$ splits we have $x\neq 0$, 
i.e. $2\leq x\leq \ell\leq\sqrt{d}$.
Since, $2$ and $\ell$ divide $d+x^2$ and $\omega(d+x^2)\leq M_{even}(d) \leq 2$ 
we get 
$$\hbox{$d+x^2 =2\ell^a$ for some $a\geq 1$.}$$
Now, $d<d+x^2=2\ell^a\leq 2d^{a/2}$ gives $a\geq 2$. 
In particular, $d$ being square-free we have $\ell\nmid x$.
We claim that $a=2$, which proves the desired result. 
Indeed, suppose that $a\geq 3$ and let us arrive at a contradiction. 
Then the even positive rational integer 
$$x':=2\ell-x\geq\ell\geq 2$$ 
satisfies
$$x'\leq\sqrt{d}.$$
Indeed, we have 
$x'=2\ell-x=f(\ell)$, 
where $f(t) =2t-\sqrt{2t^a-d}$ is such that $f'(t) =(2\sqrt{2t^a-d}-at^{a-1})/\sqrt{2t^a-\ell}\leq 0$ for $t\geq 1$ 
(notice that $a\geq 3$ yields $4(2t^a-\ell)\leq 8t^a\leq a^2t^{2a-2}$ for $t\geq 1$). 
Hence, letting $t_0\geq 1$ denote the positive real number defined by $d+4=2t_0^a$, 
we get $x'=f(\ell)\leq f(t_0) =2\left (\frac{d+4}{2}\right )^{1/a}-2\leq 2\left (\frac{d+4}{2}\right )^{1/3}-2\leq\sqrt{d}$, 
as $(\sqrt{d}+2)^3-4(d+4) =d\sqrt{d}+2d+12\sqrt{d}-8\geq 0$).\\
Since $d+x'^2\equiv 2\pmod 4$, $\ell\mid d+x'^2$ and $\omega(d+x'^2)\leq M_{even}(d)\leq 2$, 
we would have $d+x'^2=2\ell^b$ for some $b\geq 1$. 
Since $\ell\nmid x$
and $2\ell^b=d+x'^2=2\ell^a-x^2+(2\ell-x)^2 =2\ell^a-4\ell x+4\ell^2\equiv -4\ell x\pmod{\ell^2}$
we get $b=1$ and the contradiction 
$(d+4)^3\leq (d+x'^2)^3 =8\ell^3\leq 8\ell^a =4(d+x^2)\leq 8d$, 
as $(d+4)^3-8d=d^3+12d^2+40d+64>0$. 
\end{proof}

Now, if $d\equiv 2\pmod 4$ is as in Theorem \ref{mainthd=2p}, 
then $\omega(d)\leq 2$. 
Hence either $d=2$ or $d=2p$ for some odd prime $p\geq 3$. 
Hence Theorem \ref{mainthd=2p} follows from Proposition \ref{propd=2p}:

\begin{proposition}\label{propd=2p}:
Let $p\geq 3$ be an odd prime. 
Set $d=2p$ and ${\mathbb K} ={\mathbb Q}(\sqrt{-d})$, 
an imaginary quadratic field of discriminant $d_{\mathbb K}=-4d$ 
and ring of algebraic integers ${\mathbb Z}_{\mathbb K} ={\mathbb Z}[\sqrt{-d}]$.
Assume that $M_{even}(d)\leq 2$.\\
$(i)$. If $p\equiv\pm 3\pmod 8$ then any prime $\ell\leq\sqrt{d}$ is inert or ramified in ${\mathbb K}$ 
and $h_{\mathbb K}=2$.\\ 
$(ii)$ If $p\equiv\pm 1\pmod 8$ then ${\mathcal Cl}_{\mathbb K}$ is cyclic and $h_{\mathbb K}=4$.
\end{proposition}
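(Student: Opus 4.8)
The plan is to pin down ${\mathcal Cl}_{\mathbb K}$ by combining three ingredients: the generation statement of Lemma~\ref{Minkowski}, the Diophantine relation of Lemma~\ref{d=2mod4}, and genus theory applied to the single relevant ramified prime ${\mathcal Q}_2$ above $2$. First I would record the facts valid for every $p$. Since $d_{\mathbb K}=-8p$ has exactly two prime divisors, genus theory gives that the $2$-rank of ${\mathcal Cl}_{\mathbb K}$ is $1$, so its $2$-Sylow subgroup $S$ is cyclic. As $d=2p>2$, the equation $x^2+dy^2=2$ is unsolvable, hence ${\mathcal Q}_2$ is non-principal and, because $(2)={\mathcal Q}_2^2$, the class $[{\mathcal Q}_2]$ has order $2$.

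The crucial genus-theoretic input I must establish is that $[{\mathcal Q}_2]$ is a square in ${\mathcal Cl}_{\mathbb K}$ if and only if $p\equiv\pm 1\pmod 8$. Since the $2$-rank is $1$, the principal genus ${\mathcal Cl}_{\mathbb K}^2$ is the kernel of the unique nontrivial genus character $\chi$, so it suffices to show $\chi([{\mathcal Q}_2])=+1\iff p\equiv\pm1\pmod 8$. I would write $d_{\mathbb K}=\Delta_1\Delta_2$ as a product of coprime prime discriminants — namely $(-8)\cdot p$ when $p\equiv 1\pmod 4$ and $8\cdot(-p)$ when $p\equiv 3\pmod 4$ — so that $2\mid\Delta_1$ in both cases, and then evaluate $\chi$ on the ramified prime ${\mathcal Q}_2$ via the standard formula $\chi({\mathcal Q}_2)=\left(\frac{\Delta_2}{2}\right)$ (Kronecker symbol). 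This yields $\left(\frac{p}{2}\right)$, respectively $\left(\frac{-p}{2}\right)$, which equals $+1$ exactly when $p\equiv\pm1\pmod 8$. I expect this evaluation of the genus character on the ramified prime to be the main obstacle, since it is precisely where the $p\bmod 8$ dichotomy originates and it requires care with the Kronecker symbol.

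Next I would convert Lemma~\ref{d=2mod4} into a relation in ${\mathcal Cl}_{\mathbb K}$. If some odd prime $\ell\le\sqrt d$ splits, the lemma produces $x$ with $d+x^2=2\ell^2$, so $\alpha=x+\sqrt{-d}$ has norm $2\ell^2$. The principal ideal $(\alpha)$ is primitive because its $\sqrt{-d}$-coefficient is $1$; since $2$ is ramified and $\ell$ splits, primitivity forces $(\alpha)={\mathcal Q}_2{\mathcal Q}_\ell^2$ for one prime ${\mathcal Q}_\ell$ above $\ell$. Passing to classes gives $[{\mathcal Q}_\ell]^2=[{\mathcal Q}_2]^{-1}=[{\mathcal Q}_2]$, so $[{\mathcal Q}_\ell]$ has order $4$ and $[{\mathcal Q}_2]$ is a square. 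By Lemma~\ref{Minkowski}, ${\mathcal Cl}_{\mathbb K}$ is generated by $[{\mathcal Q}_2]$ together with the classes of the split odd primes $\ell\le\sqrt d$ (the ramified prime $p$ has norm $p>\sqrt d$ and is excluded). All of these generators have $2$-power order ($2$ or $4$), so ${\mathcal Cl}_{\mathbb K}$ is a $2$-group; being of $2$-rank $1$ it is cyclic, whence ${\mathcal Cl}_{\mathbb K}=S$.

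Finally I would assemble the two cases. For $(i)$, when $p\equiv\pm3\pmod 8$, the genus computation gives $[{\mathcal Q}_2]\notin{\mathcal Cl}_{\mathbb K}^2$; were there a split odd prime $\ell\le\sqrt d$, the relation $[{\mathcal Q}_\ell]^2=[{\mathcal Q}_2]$ would exhibit $[{\mathcal Q}_2]$ as a square, a contradiction. Hence every prime $\ell\le\sqrt d$ is inert or ramified, the only non-inert generator is ${\mathcal Q}_2$, and ${\mathcal Cl}_{\mathbb K}=\langle[{\mathcal Q}_2]\rangle$ is cyclic of order $2$, i.e. $h_{\mathbb K}=2$. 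For $(ii)$, when $p\equiv\pm1\pmod 8$, the genus computation gives $[{\mathcal Q}_2]\in{\mathcal Cl}_{\mathbb K}^2$, so $4\mid h_{\mathbb K}$ and $|S|\ge 4$. On the other hand ${\mathcal Cl}_{\mathbb K}=S$ is cyclic and generated by elements of order dividing $4$, which in a cyclic $2$-group forces $|S|\le 4$. Therefore $h_{\mathbb K}=4$ and ${\mathcal Cl}_{\mathbb K}$ is cyclic, as claimed.
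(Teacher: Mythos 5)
Your proof is correct and follows essentially the same route as the paper: Lemma \ref{d=2mod4} gives $d+x^2=2\ell^2$, the factorization $(x+\sqrt{-d})={\mathcal Q}_2{\mathcal Q}_\ell^2$ shows $[{\mathcal Q}_2]$ is a square exactly when some odd prime $\ell\le\sqrt d$ splits, genus theory turns this into the condition $p\equiv\pm 1\pmod 8$, and Lemma \ref{Minkowski} closes both cases. The only difference is presentational: you re-derive the evaluation of the genus character on ${\mathcal Q}_2$ via a prime-discriminant decomposition of $d_{\mathbb K}=-8p$, whereas the paper simply invokes the Legendre-symbol criterion $\left(\frac{2}{p}\right)=+1$ recorded in its Notations section.
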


\begin{proof}
Let $\ell$ be an odd prime that splits in ${\mathbb K}$. 
By Lemma \ref{d=2mod4} there exists $x\in {\mathbb Z}$ such that $d+x^2=2\ell^2$. 
It follows that the principal and primitive ideal $(x+\sqrt{-d})$ is a product ${\mathcal Q}_2{\mathcal Q}_l^2$, 
where ${\mathcal Q}_2$ is the not principal prime ramified ideal above $2$ 
and ${\mathcal Q}_l$ is one of the two prime ideals of ${\mathbb K}$ above $\ell$. 
Hence the ideal class $[{\mathcal Q}_2]$ is a square in ${\mathcal Cl}_{\mathbb K}$. 
By genus theory it follows that the Legendre symbol $\left (\frac{2}{p}\right )$ is equal to $+1$, 
which amounts to having $p\equiv\pm 1\mod 8$.
By Lemma \ref{Minkowski}, and since the ideal class $[{\mathcal Q}_2]$ is of order $2$, point $(i)$ follows. 
Now, if $p\equiv\pm 1\pmod 8$, then the Legendre symbol $\left (\frac{2}{p}\right )$ is equal to $+1$ 
and by genus theory the ideal class $[{\mathcal Q}_2]$ is a square in ${\mathcal Cl}_{\mathbb K}$. 
Since the $2$-rank of ${\mathcal Cl}_{\mathbb K}$ is $\omega (d_{\mathbb K})-1 =1$, 
its $2$-Sylow subgroup is cyclic and its subgroup $H_4$ of the ideal classes of order dividing $4$ 
is of order $4$. 
Since ${\mathcal Q}_2{\mathcal Q}_l^2$ is principal, the ideal class of ${\mathcal Q}_l$ is in $H_4$. 
Hence ${\mathcal Cl}_{\mathbb K} =H_4$ is cyclic of order $4$, by Lemma \ref{Minkowski}.
\end{proof}

\section{Proof of Theorem \ref{mainthd=2mod4}}
Let $d\equiv 2\pmod 4$ with $M_{even}(d)\leq 2$ be square-free.
Clearly, we may assume that $d>2$. Hence $\omega(d)\geq 2$.
We claim that $\omega(d)\leq 3$, i.e. that 
$d=2p$ 
or $d=2p_1p_2$. 
Indeed, if $\omega(d)\geq 4$ 
then $d=2pd'$ with $p\geq 3$ a prime and $d'>p^2>2p$ odd and not divisible by $p$. 
It follows that $2\leq 2p\leq\sqrt{d}$ and $M_{even}(d)\geq\omega(d+(2p)^2) =\omega(2p(d'+2p))\geq 3$.
Since Proposition \ref{propd=2p} deals with the case $d=2p$, 
we now deal with the case $d=2p_1p_2$.
There are two cases.

\noindent $(i)$. Assume that no odd prime $\ell\leq\sqrt{d}$ splits in ${\mathbb K}$. 
Then the ideal class group of ${\mathbb K}$ 
is generated by the ideal classes of the prime ramified ideals 
${\mathcal Q}_2$, ${\mathcal Q}_{p_1}$ and ${\mathcal Q}_{p_2}$ above $2$, $p_1$ and $p_2$, 
by Lemma \ref{Minkowski}. 
Now, the six equations $x^2+2p_1p_2y^2=N$ have clearly no solution in rational integers 
for $N\in\{2,p_1,p_2,2p_1,2p_2,p_2p_3\}$. 
Therefore, these classes are of order $2$, no product of two of them is the neutral element 
whereas the product of the three of them is the neutral element,
as the product ${\mathcal Q}_2{\mathcal Q}_{p_1}{\mathcal Q}_{p_2}$ is the principal ideal $(\sqrt{-d})$. 
Hence ${\mathcal Cl}_{\mathbb K}$ is isomorphic to ${\mathbb Z}/2{\mathbb Z}\times {\mathbb Z}/2{\mathbb Z}$ 
and $h_{\mathbb K} =4$.

\noindent $(ii)$. Otherwise, let $\ell\leq\sqrt{d}$ be an odd prime that splits in ${\mathbb K}$. 
Then $d+x^2
=2\ell^2$ for some $x\in {\mathbb Z}$, by Lemma \ref{d=2mod4}. 
It follows that the principal and primitive ideal $(x+\sqrt{-d})$ is a product ${\mathcal Q}_2{\mathcal Q}_\ell^2$, 
where ${\mathcal Q}_\ell$ is one of the two prime ideals of ${\mathbb K}$ above $\ell$. 
Since $d>2$, the ideal class $[{\mathcal Q}_2]$ is of order $2$. 
Hence, $[{\mathcal Q}_\ell]\in H_4$, 
where $H_4$ is the subgroup of ${\mathcal Cl}_{\mathbb K}$ 
of its ideal classes of order dividing $4$. 
By Lemma \ref{Minkowski} it follows that ${\mathcal Cl}_{\mathbb K}$ is a subgroup of $H_4$. 
Hence ${\mathcal Cl}_{\mathbb K} =H_4$. 
The $2$-rank of the ideal class group of ${\mathbb K} ={\mathbb Q}(\sqrt{-d})$ 
being equal to $\omega(d_{\mathbb K})-1 =2$, 
the order $h_{\mathbb K}$ of $H_4$ divides $4^{2} =16$.

\section{Proof of Theorem \ref{mainthd=2mod4notsquarefree}}
Theorem \ref{mainthd=2mod4notsquarefree} follows from the following two Lemmas.

\begin{lemma}
Let $d\geq 2$ with $d\equiv 2\pmod 4$ be a not square-free integer such that $M_{even}(d)\leq 2$. 
Then $d=2p^2$ for some prime $p\geq 3$. 
\end{lemma}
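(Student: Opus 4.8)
The plan is to reduce everything to a single prime with a square factor and then squeeze the size of $d$ using even multiples of that prime. First I would write $d=2m$ with $m$ odd (the congruence $d\equiv 2\pmod 4$ rules out $4\mid d$), so that ``$d$ not square-free'' forces $m$ to be odd and not square-free. I fix an odd prime $p$ with $p^2\mid m$ and aim to prove $m=p^2$. Suppose not. Since $m/p^2$ is a positive odd integer different from $1$, it is $\geq 3$, whence $m\geq 3p^2$ and $2p\leq\sqrt{2m}=\sqrt d$. Thus the even value $x=2p$ is admissible for $M_{even}(d)$.

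Next I would exploit this value. Because $d+(2p)^2=2(m+2p^2)$ with $m+2p^2$ odd, we have $\omega(d+(2p)^2)=1+\omega(m+2p^2)$, so $M_{even}(d)\leq 2$ gives $\omega(m+2p^2)\leq 1$. As $p\mid m+2p^2$, this forces $m+2p^2=p^k$ for a single prime power, and $m\geq 3p^2$ gives $k\geq 3$. Hence $m=p^2\,t$ with $t:=p^{k-2}-2$, and since $k-2\geq 1$ we get $t\equiv -2\not\equiv 0\pmod p$, so in fact $v_p(m)=2$ exactly and $t\geq 3$ is odd. The key observation is then the clean factorisation, valid for every even multiple $x=2pj$ with $2pj\leq\sqrt d$ (equivalently $j\leq\sqrt{t/2}$),
$$d+(2pj)^2=2m+4p^2j^2=2p^2\,(t+2j^2),$$
where $t+2j^2$ is odd and $t+2j^2\equiv 2(j^2-1)\pmod p$. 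Thus $p\mid t+2j^2$ if and only if $j\equiv\pm1\pmod p$. For any admissible $j$ with $j\not\equiv\pm1\pmod p$, the three primes $2$, $p$ and any prime dividing the factor $t+2j^2>1$ are distinct, giving $\omega(d+(2pj)^2)\geq 3$, contradicting $M_{even}(d)\leq 2$.

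The main obstacle is guaranteeing that such a ``good'' index $j$ actually fits below $\sqrt d$. When $p\geq 5$ one takes $j=2$ (so $j\not\equiv\pm1\pmod p$), which is admissible once $t\geq 8$; when $p=3$ one takes $j=3$, admissible once $t\geq 18$. I would check that $k\geq 4$ already makes $t=p^{k-2}-2$ large enough, so only finitely many pairs $(p,k)$ with $k=3$ (or $p=3$ with small exponent) can escape. Listing them via $t+2=p^{k-2}$ reduces the exceptions to $t=3,5,7$, i.e. to $d\in\{150,490,126\}$, each of which is eliminated directly since $\omega(d+2^2)=3$ in all three cases ($154=2\cdot7\cdot11$, $494=2\cdot13\cdot19$, $130=2\cdot5\cdot13$). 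This exhausts the possibilities under the assumption $m\neq p^2$, so $m=p^2$ and $d=2p^2$, as claimed.
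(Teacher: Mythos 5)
Your proof is correct and follows essentially the same route as the paper: both isolate a prime $p$ with $p^2\mid d$, evaluate $d+x^2$ at even multiples of $p$ to force the odd cofactor to be a power of $p$, and reduce to the identical exceptional list $d\in\{126,150,490\}$, which is then killed by a direct check (the paper computes $M_{even}$ there; you use $x=2$). The only substantive difference is the middle step: the paper compares the two test points $2p$ and $4p$ and subtracts ($d'+2=p^a$, $d'+8=p^b$, hence $p^a\mid 6$), whereas you exploit $t\equiv-2\pmod p$ together with one $j\not\equiv\pm1\pmod p$ --- two equally effective devices for disposing of all large cofactors at once.
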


\begin{proof}
First, assume that $p^3$ divides $d =2p^3d'$ for some prime $p\geq 3$, with $d'\geq 1$ odd. 
Then $(2p)^2\leq d$. Hence, $M_{even}(d)\geq\omega(d+(2p)^2) =\omega(2p^2(pd'+2))\geq 3$.\\
Second, since $d$ is not square-free, take a prime $p\geq 3$ such that $p^2$ divides $d$. 
Write 
$$d=2p^2d'$$ 
with $d'$ odd and $p\nmid d'$, 
by the first point.\\
$(i)$. Assume that $d'\geq 9$. 
Then $(2p)^2\leq (4p)^2\leq 2p^2d'=d$. 
Hence $\omega(d+(2p)^2) =\omega(2p^2(d'+2))\leq M_{even}(d)\leq 2$ 
and $\omega(d+(4p)^2)=\omega(2p^2(d'+8)\leq M_{even}(d)\leq 2$. 
Therefore, $d'+2=p^a$ and $d'+8=p^b$ with $1\leq a<b$. 
It follows that $6=p^a(p^{b-a}-1)$, 
which implies the contradiction $9\leq d'=p^a-2\leq p^a(p^{b-a}-1)-2\leq 6-2=4$.\\
$(ii)$. Now assume that $3\leq d'\leq 7$. 
Then $(2p)^2\leq 2p^2d' =d$. 
Hence $\omega (ad+(2p)^2) =\omega(2p^2(d'+2))\leq M_{even}(d)\leq 2$. 
Therefore, $d'+2=p^a$ for some $a\geq 1$. 
It follows that either
$(a)$ $d'=3$, $p^a=5$ and $d=2\cdot 5^2\cdot 3 =150$ for which $M_{even}(150)=3$, 
or $(b)$ $d'=5$, $p^a=7$ and $d=2\cdot 7^2\cdot 5 =490$ for which $M_{even}(490)=3$, 
or $(c)$ $d'=7$, $p^a=9$ and $d=2\cdot 3^2\cdot 7 =126$ for which $M_{even}(126)=3$.\\
$(iii)$. Hence $d'=1$ and $d=2p^2$.
\end{proof}

\begin{lemma}
Let $d\geq 2$ be an integer of the form $d=2p^2$ for some prime $p\geq 3$. 
Then $M_{even}(d)\leq 2$ if and only if $d=2\cdot 3^2 =18$.
\end{lemma}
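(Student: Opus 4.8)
The plan is to reduce everything to the binary form $X^2 + 2Y^2$. Writing a generic even $x$ as $x = 2y$, one has $d + x^2 = 2p^2 + 4y^2 = 2(p^2 + 2y^2)$, and since $p^2 + 2y^2$ is odd this gives $\omega(d + x^2) = 1 + \omega(p^2 + 2y^2)$. Hence $\omega(d + x^2) \leq 2$ is equivalent to $p^2 + 2y^2$ being a prime power. Since $x$ ranges over the even integers with $2 \leq x \leq \sqrt{d} = p\sqrt{2}$, the parameter $y$ ranges over $1 \leq y \leq p/\sqrt{2}$, and the condition $M_{even}(2p^2) \leq 2$ becomes: $p^2 + 2y^2$ is a prime power for every $y$ with $1 \leq y \leq p/\sqrt{2}$.

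For the easy implication I would simply check $d = 18$ (that is, $p = 3$) by hand: the admissible even values are $x = 2$ and $x = 4$, giving $d + x^2 = 22 = 2\cdot 11$ and $34 = 2\cdot 17$, so $M_{even}(18) = 2$.

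For $p \geq 5$ I would force $\omega \geq 3$ using only divisibility by $3$. As $p \geq 5$ is prime we have $p \neq 3$, hence $p^2 \equiv 1 \pmod 3$; consequently $p^2 + 2y^2 \equiv 1 + 2y^2 \pmod 3$ vanishes exactly when $y \not\equiv 0 \pmod 3$. In particular both $y = 1$ and $y = 2$, which lie in the range $1 \leq y \leq p/\sqrt{2}$ as soon as $p \geq 3$, yield $3 \mid p^2 + 2$ and $3 \mid p^2 + 8$. If $M_{even}(2p^2) \leq 2$ held, both $p^2 + 2$ and $p^2 + 8$ would be prime powers divisible by $3$, hence powers of $3$, say $p^2 + 2 = 3^a$ and $p^2 + 8 = 3^b$ with $a < b$. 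Subtracting gives $3^a(3^{b-a} - 1) = 6$, which forces $a = 1$ and $b - a = 1$, whence $p^2 + 2 = 3$ and the contradiction $p^2 = 1$. Therefore $M_{even}(2p^2) \geq 3$ for every $p \geq 5$, and combined with the direct check at $p=3$ this yields the stated equivalence.

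There is no serious obstacle here: the whole argument is elementary once the substitution $x = 2y$ exposes the form $p^2 + 2y^2$. The only points needing a little care are verifying that the two test values $x = 2$ and $x = 4$ genuinely satisfy $x \leq \sqrt{2p^2} = p\sqrt{2}$, which holds for all $p \geq 3$, and noting that the mod-$3$ argument breaks down precisely at $p = 3$, where $3 \mid p^2$ makes $p^2 + 2y^2 \equiv 2y^2 \pmod 3$ nonzero for $y = 1, 2$ — which is exactly why $d = 18$ is the sole exception.
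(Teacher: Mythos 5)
Your proof is correct, and it takes a genuinely different route from the paper's. The paper argues as in its Lemma 4 (the $d=p^2$ case): it picks a single residue $x_p\equiv 4p\pmod{33}$ with $|x_p|\leq 17$, so that $2p^2+(2x_p)^2\equiv 66p^2\equiv 0\pmod{66}$, forcing the three prime divisors $2$, $3$, $11$ at once; this only applies once $2x_p\leq\sqrt{2p^2}$, i.e.\ for $p\geq 25$, and the remaining primes $3\leq p<25$ are disposed of by direct computation. You instead substitute $x=2y$ to reduce the condition to ``$p^2+2y^2$ is a prime power for $1\leq y\leq p/\sqrt{2}$'', observe that for $p\geq 5$ both $y=1$ and $y=2$ give multiples of $3$, and kill both simultaneously via the exponential Diophantine equation $3^b-3^a=6$, which forces the absurdity $p^2+2=3$. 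Your argument is in the spirit of the paper's Lemmas 1, 3 and 5 (two values of $x$ whose outputs are prime powers with a small difference) rather than of its actual proof here; what it buys is uniformity, since every prime $p\geq 5$ is handled at once and only $p=3$ needs a hand check, whereas the paper trades a longer finite verification for a one-line congruence. All the small points you flag (the range check $y\leq p/\sqrt 2$ for $y=1,2$, the oddness of $p^2+2y^2$, and the breakdown of the mod-$3$ argument exactly at $p=3$) are handled correctly.
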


\begin{proof}
We argue as in Lemma \ref{step4}. 
Let $p\geq 3$ be an odd prime. 
Take $x_p\in [-16,17]\cap {\mathbb Z}$ such that $x_p\equiv 4p\pmod{33}$. 
Then $2x_p$ is even and $2p^2+(2x_p)^2\equiv 2p^2(1+2\cdot 4^2)\equiv 0\pmod{66}$. 
Hence $M_{even}(2p^2)\geq\omega(2p^2+(2x_p)^2)\geq 3$ for $34^2\leq 2p^2$, i.e. for $p\geq 25$. 
The computation of $M_{even}(2p^2)$ for $3\leq p<25$ yields the desired result.
\end{proof}

\section{Proof of Theorem \ref{mainthd=2mod4reel}}
\begin{lemma}\label{d=2mod4reel}
Let $d> 2$ with $d\equiv 2\pmod 4$ be a square-free integer. 
Assume that $M_{even}'(d)\leq 2$. 
Let $\ell\leq\sqrt{d}$ be an odd prime that splits in ${\mathbb K} ={\mathbb Q}(\sqrt{-d})$. 
Then, there exists $x$ even with $2\leq x\leq\sqrt{d}$ 
such that $d-x^2=2\ell$, that implies $x=l-1$ and $d=l^2+1$, or $d-x^2=2\ell^2$.
\end{lemma}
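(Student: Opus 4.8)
The plan is to mirror the proof of Lemma~\ref{d=2mod4} for the imaginary case, adjusting for the minus sign in $d-x^2$ and for the fact that here there are two possible outcomes rather than one. First I would record the analogue of Lemma~\ref{lnotinert}: since $\ell$ splits, $d$ is a quadratic residue modulo $\ell$, so (arguing exactly as in Lemma~\ref{lnotinert}, with $d-x^2$ replacing $d+x^2$) there is an even $x$ with $0\le x\le\ell$ and $\ell\mid d-x^2$. Because $\ell$ splits we have $\ell\nmid d$, which excludes $x=0$ and $x=\ell$ and also forces $\ell\nmid x$; hence $2\le x\le\ell-1<\ell\le\sqrt d$, so $d-x^2>0$ and $d-x^2\equiv 2\pmod 4$. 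Since $2$ and $\ell$ both divide $d-x^2$ and $\omega(d-x^2)\le M_{even}'(d)\le 2$, I obtain $d-x^2=2\ell^a$ for some $a\ge 1$.

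Next I would split on $a$. If $a=1$, then $d=x^2+2\ell$, and combining with $\ell^2\le d$ gives $\ell^2-2\ell=(\ell-1)^2-1\le x^2\le(\ell-1)^2$. Since $(\ell-1)^2-1$ lies strictly between the consecutive squares $(\ell-2)^2$ and $(\ell-1)^2$ for $\ell\ge 3$, the only admissible value is $x^2=(\ell-1)^2$, whence $x=\ell-1$ and $d=(\ell-1)^2+2\ell=\ell^2+1$; this is the first alternative. It then remains to show that $a\ge 3$ is impossible, leaving $a=2$ and $d-x^2=2\ell^2$, the second alternative.

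So I would assume $a\ge 3$ and set $y:=2\ell-x$, which is even and satisfies $y\ge\ell+1$. Writing $x=\sqrt{d-2\ell^a}$, we have $y=g(\ell)$ with $g(t)=2t-\sqrt{d-2t^a}$, and $g'(t)=2+a t^{a-1}/\sqrt{d-2t^a}>0$, so $g$ is increasing. From $2\ell^a=d-x^2\le d-4$ I get $\ell\le t_1:=((d-4)/2)^{1/a}$, and therefore $y=g(\ell)\le g(t_1)=2t_1-2\le 2((d-4)/2)^{1/3}-2\le\sqrt d$, the last step reducing to $4(d-4)\le(\sqrt d+2)^3$, i.e. $0\le d\sqrt d+2d+12\sqrt d+24$. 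Thus $2\le y\le\sqrt d$ with $y$ even, so $y$ is in the valid range.

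Finally I would compute $d-y^2=(d-x^2)-4\ell^2+4\ell x=2\ell^a-4\ell^2+4\ell x\equiv 4\ell x\pmod{\ell^2}$, which (using $\ell\nmid x$) shows $\ell\parallel d-y^2$. As $d-y^2\equiv 2\pmod 4$ and $\omega(d-y^2)\le 2$, the admissibility of $y$ forces $d-y^2=2\ell^b$ with $b=1$, i.e. $d-y^2=2\ell$. Subtracting $d-x^2=2\ell^a$ and using $x^2-y^2=(x-y)(x+y)=4\ell(x-\ell)$ yields $2x=2\ell+1-\ell^{a-1}$, whose right-hand side is $\le 2\ell+1-\ell^2=-(\ell-1)^2+2<0$ for $\ell\ge 3$, contradicting $x\ge 2$. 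This rules out $a\ge 3$ and finishes the proof. I expect the main obstacle to be the bound $y=2\ell-x\le\sqrt d$: unlike the imaginary case the reflection $g$ is increasing, so I must bound $\ell$ from above by $((d-4)/2)^{1/a}$ and clear the resulting cubic inequality, which is precisely where the hypothesis $a\ge 3$ is used.
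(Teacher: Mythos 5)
Your proof is correct. It follows the paper's overall strategy --- produce an even $x$ with $2\le x\le\ell-1$ and $d-x^2=2\ell^a$, and for $a\ge3$ reflect to $x'=2\ell-x$, check $x'\le 2((d-4)/2)^{1/3}-2\le\sqrt d$, and use $d-x'^2\equiv 4\ell x\pmod{\ell^2}$ together with $\ell\nmid x$ to force $d-x'^2=2\ell$ --- but your endgame is genuinely different and in fact cleaner. The paper stops at $d-x'^2=2\ell$ and then invokes the chain $x^2\le\ell^2<d=x^2+2\ell\le(\ell-1)^2+2\ell=\ell^2+1$, which is only legitimate for an $x$ satisfying $x\le\ell-1$; since $x'=2\ell-x\ge\ell+1$, that chain does not apply to $x'$, so as written the paper's argument leaves the ``that implies $x=\ell-1$ and $d=\ell^2+1$'' clause dangling in the branch $a\ge 3$. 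You close this by subtracting $d-x^2=2\ell^a$ from $d-x'^2=2\ell$ to get $2x=2\ell+1-\ell^{a-1}<0$ for $a\ge3$ and $\ell\ge3$, an outright contradiction; hence $a\in\{1,2\}$, and your between-consecutive-squares argument in the case $a=1$ (applied to the original $x\le\ell-1$) correctly yields $x=\ell-1$ and $d=\ell^2+1$, while $a=2$ gives the second alternative. The only cosmetic point is that the monotonicity of $g(t)=2t-\sqrt{d-2t^a}$ is unnecessary: $x'\le 2\ell-2$ and $\ell\le((d-4)/2)^{1/a}$ already give the required bound directly, which is how the paper obtains it.
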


\begin{proof}
As in Lemma \ref{lnotinert}, 
there exists $x\in {\mathbb Z}$ even with $2\leq x\leq \ell\leq\sqrt{d}$ 
such that $\ell$ divides $d-x^2$. 
Since, $2$ and $\ell$ divide $d-x^2$ and $\omega(d-x^2)\leq M_{even}'(d) \leq 2$ 
we get 
$$\hbox{$d-x^2 =2\ell^a$ for some $a\geq 1$.}$$
Suppose that $a\geq 3$. 
Since $d$ is square-free, we have $\ell\nmid x$.
The even positive rational integer 
$$x':=2\ell-x\geq\ell\geq 2$$ 
satisfies
$$x'\leq 2l-2\leq 2((d-4)/2)^{1/a}-2\leq 2((d-4)/2)^{1/3}-2\leq\sqrt{d}.$$
Since $2$ and $\ell$ divide $d-x'^2$ and $\omega(d-x'^2)\leq M_{even}(d)'\leq 2$, 
we have $d-x'^2=2\ell^b$ for some $b\geq 1$. 
Since $\ell\nmid x$
and 
$2\ell^b 
=d-x'^2
=2\ell^a-x^2-(2\ell-x)^2 
=2\ell^a+4\ell x-4\ell^2
\equiv -4\ell x\pmod{\ell^2}$
we get $b=1$ 
and $d-x'^2=2\ell$. 
In that case, $x^2\leq l^2<d=x^2+2l\leq (l-1)^2+2l=l^2+1$
and hence $d=l^2+1$ and $x^2=d-2l=l^2+1-2l=(l-1)^2$.
The desired result follows.
\end{proof}

Now we can proceed with the proof of Theorem \ref{mainthd=2mod4} 
as we did for the proof of Theorem \ref{mainthd=2mod4}. 
The only difference being that in the real quadratic case 
the $2$-rank of the ideal class group is $\leq\omega(d_{\mathbb K})-1$ 
but not necessarily equal to $\omega(d_{\mathbb K})-1$.

\section{Proof of Theorem \ref{mainthd=2mod4notsquarefreereel}}
Theorem \ref{mainthd=2mod4notsquarefreereel} follows from the following two Lemmas.

\begin{lemma}
Let $d\geq 2$ with $d\equiv 2\pmod 4$ be a not square-free integer such that $M_{even}'(d)\leq 2$. 
Then $d=54$, for which $M_{even}'(54)=2$, 
$d =90$, for which $M_{even}'(90)=2$, 
$d=2p^2$ for some prime $p\geq 3$, 
or $d=6p^2$ for some prime $p\geq 3$. 
\end{lemma}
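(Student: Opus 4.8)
The plan is to imitate the proof of the not-square-free lemma in the proof of Theorem~\ref{mainthd=2mod4notsquarefree}, testing the hypothesis $M_{even}'(d)\leq 2$ at the two even points $x=2p$ and $x=4p$, where $p$ is an odd prime with $p^2\mid d$. The arithmetic engine is the same: upon writing $d=2p^2d'$ with $p\nmid d'$ and $d'$ odd, one has $d-(2p)^2=2p^2(d'-2)$ and $d-(4p)^2=2p^2(d'-8)$, and since each cofactor $d'-2$, $d'-8$ is \emph{odd}, the constraint $\omega(\cdot)\leq 2$ forces it to be a (possibly trivial) power of $p$. The crucial difference from the additive case is that subtraction allows these cofactors to collapse to $1$, which is exactly what will produce the sporadic exceptional values $54$ and $90$ rather than a clean contradiction.

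First I would dispose of the case where some odd prime $p$ has $p^3\mid d$. Writing $d=2p^3d'$ with $d'\geq 1$ odd, one checks $(2p)^2\leq d$ (as $pd'\geq 3$) and computes $d-(2p)^2=2p^2(pd'-2)$. The cofactor $pd'-2$ is odd and $\equiv -2\pmod p$, hence coprime to $p$, so $M_{even}'(d)\leq 2$ forces $pd'-2=1$, i.e. $pd'=3$, giving $p=3$, $d'=1$ and $d=2\cdot 3^3=54$. (In the additive case the analogous cofactor $pd'+2\geq 5$ can never equal $1$, which is why no exception arises there; this sign asymmetry is the conceptual heart of the matter.)

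Next, assuming $p^3\nmid d$ for every odd prime, I would fix a prime $p$ with $p^2\parallel d$ and write $d=2p^2d'$ with $p\nmid d'$ and $d'$ odd. If $d'=1$ then $d=2p^2$. If $d'\geq 3$, then $(2p)^2\leq d$ yields $d-(2p)^2=2p^2(d'-2)$ with $d'-2$ odd, whence $d'-2=p^a$ for some $a\geq 0$. The value $a=0$ gives $d'=3$ and $d=6p^2$ (necessarily $p\neq 3$, consistently with $p\nmid d'$). For $d'\in\{5,7\}$ the single equation $d'-2=p^a$ pins down $p$: $d'=5$ forces $p=3$, $d=90$, while $d'=7$ forces $p=5$, $d=350$. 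Finally, for $d'\geq 9$ one also has $(4p)^2\leq d$, so $d'-8=p^b$ as well, and subtracting gives $p^a-p^b=6$; since $p^b\mid 6$ for an odd prime $p$, the only solutions are $(p,d')=(7,9)$ and $(3,11)$, i.e. $d=882$ and $d=198$.

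This reduces $d$ to one of the forms $2p^2$, $6p^2$, or to one of the finitely many values $\{54,90,350,198,882\}$. The last step, and the only non-structural part of the argument, is to eliminate the spurious candidates by exhibiting one offending even $x$ in each case: $\omega(350-8^2)=\omega(286)=\omega(2\cdot 11\cdot 13)=3$, $\omega(198-4^2)=\omega(182)=\omega(2\cdot 7\cdot 13)=3$ and $\omega(882-6^2)=\omega(846)=\omega(2\cdot 3^2\cdot 47)=3$, so $350$, $198$ and $882$ each have $M_{even}'\geq 3$ and cannot occur. The surviving exceptional values are honest, as the direct checks $M_{even}'(54)=M_{even}'(90)=2$ confirm. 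I expect the main obstacle to be precisely this loss of a uniform contradiction: unlike the additive case, the sign change in $d-x^2$ lets the cofactors degenerate, so one must both \emph{discover} the two genuine exceptions $54,90$ and \emph{eliminate} the three near-misses $198,350,882$ by explicit computation rather than by a single inequality.
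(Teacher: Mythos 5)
Your proof is correct and follows essentially the same route as the paper's: dispose of $p^3\mid d$ via $x=2p$ (which yields the exception $d=54$), then write $d=2p^2d'$ with $p\nmid d'$ odd and test $x=2p$ and, when $d'\geq 9$, also $x=4p$, so that the odd cofactors $d'-2$ and $d'-8$ must be powers of $p$. One point in your favour: in the case $d'\geq 9$ you correctly allow the exponent of $p$ in $d'-8$ to be $0$, which produces the extra candidate $(p,d')=(7,9)$, i.e.\ $d=882$ (for which indeed $\omega(882-(2\cdot 7)^2)=\omega(2\cdot 7^3)=2$ and $\omega(882-(4\cdot 7)^2)=\omega(2\cdot 7^2)=2$, so the two test points at $p=7$ give no contradiction); the paper's proof writes $d'-8=p^b$ with $1\leq b<a$ and thereby silently skips this candidate, whereas you eliminate it explicitly via $\omega(882-6^2)=\omega(2\cdot 3^2\cdot 47)=3$. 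So your write-up actually closes a small gap in the published argument; everything else (the sporadic values $54$, $90$, the near-misses $198$, $350$, and the final forms $2p^2$, $6p^2$) matches the paper.
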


\begin{proof}
First, assume that $p^3$ divides $d =2p^3d'$ for some prime $p\geq 3$ and $d'\geq 1$ odd. 
Then $(2p)^2\leq d$. 
Hence, $\omega(d-(2p)^2)=\omega(2p^2(pd'-2))\leq M_{even}'(d)\leq 2$, 
that implies $pd'-2=1$. Thus, $p=3$ and $d'=1$, i.e. $d=2\cdot 3^3=54$.\\
Second, since $d$ is not square-free, take a prime $p\geq 3$ such that $p^2$ divides $d$. 
Write 
$$d=2p^2d'$$ 
with $d'$ odd and $p\nmid d'$, 
by the first point.\\
$(i)$. Assume that $d'\geq 9$. 
Then $(2p)^2\leq (4p)^2\leq 2p^2d' =d$. 
Hence $\omega(d-(2p)^2)=\omega(2p^2(d'-2))\leq M_{even}'(d)\leq 2$ 
and $\omega(d-(4p)^2) =\omega(2p^2(d'-8))\leq M_{even}'(d)\leq 2$. 
Therefore, $d'-2=p^a$ and $d'-8=p^b$ with $1\leq b<a$. 
It follows that $6=p^b(p^{a-b}-1)$, 
which implies $p^b=3$, $d'=p^b+8 =11$ and $ d =2p^2d' =198$. 
Moreover, $M_{even}'(198)=3$.\\
$(ii)$. Now assume that $3\leq d'\leq 7$, i.e. that $d'\in\{3,5,7\}$. 
Then $(2p)^2\leq 2p^2d' =d$. 
Hence $\omega(d-(2p)^2) =\omega(2p^2(d'-2))\leq M_{even}'(d)$. 
Therefore, $d'-2=p^a$ for some $a\geq 0$. 
It follows that either
$(a)$ $d'=3$ and $d=6p^2$, 
or $(b)$ $d'=5$, $p^a=3$ and $d=2\cdot 3^2\cdot 5 =90$ for which $M_{even}(90)=2$, 
or $(c)$ $d'=7$, $p^a=5$ and $d=2\cdot 5^2\cdot 7 =350$ for which $M_{even}(350)=3$.
\end{proof}

\begin{lemma}
Let $d\geq 2$ be an integer of the form $d=2p^2$ or $d=6p^2$ for some prime $p\geq 3$. 
Then $M_{even}'(d)\leq 2$ if and only if $d=18$, $50$ or $98$.
\end{lemma}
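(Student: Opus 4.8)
The plan is to mimic the argument of Lemma \ref{step4} (and of the non-square-free imaginary case treated just before): for all but finitely many $p$ I will exhibit a single even $x$ in the range $2\le x\le\sqrt d$ for which $d-x^2$ is forced to be divisible by three distinct primes, and I will dispose of the remaining small $p$ by direct computation. Since $x$ is even and $d\equiv 2\pmod 4$, we have $d-x^2\equiv 2\pmod 4$; writing $x=2y$ this means $d-x^2=2m$ with $m$ odd, so $\omega(d-x^2)\ge 3$ as soon as $m$ is divisible by two distinct odd primes. Hence it suffices, for each family, to fix two odd primes $q_1\ne q_2$ (both different from $p$) and to produce an even $x=2y$ in range with $q_1q_2\mid m$: then the \emph{positive} integer $d-x^2=2m$ is a positive multiple of $2q_1q_2$ and therefore has at least the three prime divisors $2,q_1,q_2$.

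For $d=2p^2$ we have $m=p^2-2y^2$, and $q\mid p^2-2y^2$ with $q\nmid p$ forces $2\equiv (p/y)^2\pmod q$, i.e. $2$ must be a quadratic residue modulo $q$, i.e. $q\equiv\pm 1\pmod 8$. I would take the two smallest such primes $q_1=7$, $q_2=17$ and the constant $c=37$, which satisfies $2c^2\equiv 1\pmod{119}$; then setting $y\equiv cp\pmod{119}$ gives $m\equiv p^2(1-2c^2)\equiv 0\pmod{119}$. Choosing for $y$ the representative in $(-60,60]$, so that $|y|\le 59$ and $x=2|y|\le 118$, the bound $x\le\sqrt d=p\sqrt 2$ holds once $p$ exceeds a threshold near $84$, whence $\omega(d-x^2)\ge 3$. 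This leaves the primes $p\le 83$ together with the excluded cases $p\in\{7,17\}$ (where $q_i=p$), which a finite computation settles, yielding exactly $d\in\{18,50,98\}=\{2\cdot 3^2,\,2\cdot 5^2,\,2\cdot 7^2\}$.

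For $d=6p^2$ we have $m=3p^2-2y^2$, and now $q\mid m$ forces $3\cdot 2^{-1}$, equivalently $6$, to be a quadratic residue modulo $q$. Here $q_1=5$ and $q_2=19$ work, and the clean choice $c=7$ satisfies $2c^2=98\equiv 3\pmod{95}$, so $y\equiv 7p\pmod{95}$ gives $m\equiv p^2(3-2c^2)\equiv 0\pmod{95}$; with $|y|\le 47$ and $x=2|y|\le 94$, the range condition $x\le\sqrt d=p\sqrt 6$ holds past a threshold near $39$. The remaining small $p$, together with $p\in\{5,19\}$, are checked by hand and all give $M'_{even}(6p^2)\ge 3$, so for $p\ge 5$ this family contributes no value with $M'_{even}\le 2$; the boundary case $p=3$ gives $d=54$, which is already recorded as a solution in the previous Lemma and is not of the reduced form considered here.

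The main obstacle is not the divisibility itself but the book-keeping that guarantees the constructed $x$ is admissible: it must be even (automatic from $x=2y$), nonzero (which fails only when $p$ equals one of the chosen $q_i$, handled separately), and above all it must satisfy $2\le x\le\sqrt d$ so that $d-x^2$ is a genuine positive multiple of $2q_1q_2$ rather than zero or negative. Pinning down the exact threshold on $p$ beyond which the representative $x=2|y|$ lies below $\sqrt d$, and then carrying out the finite verification for all smaller $p$, is where the real (if routine) work lies; selecting the moduli $119$ and $95$ as small as possible is precisely what keeps this finite check manageable.
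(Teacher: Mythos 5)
Your proposal is correct and follows essentially the same route as the paper: a congruence $y\equiv cp\pmod{q_1q_2}$ forcing $2q_1q_2\mid d-(2y)^2$ for all $p$ above an explicit threshold, together with a finite computation below it (the paper takes $161=7\cdot 23$ with $c=9$ for $d=2p^2$, and for $d=6p^2$ it exploits $3\mid d$ to get away with the tiny modulus $15$ and $c=3$, so its residual check there only runs over $p<12$ rather than your $p\le 37$). Your side remark on $d=54=6\cdot 3^2$ is apt: taken literally the lemma's ``only if'' direction fails for $p=3$ since $M_{even}'(54)=2$, an edge case the paper's own finite verification also meets and which is absorbed by the preceding lemma and the list in Theorem \ref{mainthd=2mod4notsquarefreereel}.
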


\begin{proof}
We argue as in Lemma \ref{step4}. 
Let $p\geq 3$ be an odd prime. 
First assume that $d=2p^2$.
Take $x_p\in [-80,81]\cap {\mathbb Z}$ such that $x_p\equiv 9p\pmod{161}$. 
Then $2x_p$ is even and $2p^2-(2x_p)^2\equiv 2p^2(1-2\cdot 9^2)\equiv 0\pmod{362}$. 
Hence $M_{even}(2p^2)\geq\omega(2p^2-(2x_p)^2)\geq 3$ for $162^2\leq 2p^2$, i.e. for $p\geq 115$. 
The computation of $M_{even}(2p^2)$ for $3\leq p<115$ yields the desired result. 
Second, assume that $d=6p^2$. 
Take $x_p\in [-7,8]\cap {\mathbb Z}$ such that $x_p\equiv 3p\pmod{15}$. 
Then $2x_p$ is even and $6p^2-(2x_p)^2\equiv 2p^2(3-2\cdot 3^2)\equiv 0\pmod{30}$. 
Hence $M_{even}(6p^2)\geq\omega(6p^2-(2x_p)^2)\geq 3$ for $16^2\leq 2p^2$, i.e. for $p\geq 12$. 
The computation of $M_{even}(6p^2)$ for $3\leq p<12$ yields the desired result. 
\end{proof}

\section{Concluding remarks}\label{remarks}
This article is a tribute to Gica's interesting paper \cite{GicaIndMath}. 
Our aim here has been to correct some inaccuracies in that paper, 
see below, 
and to begin exploring whether it is possible to transpose some of his results to the real quadratic case, 
see Theorem \ref{mainthd=2mod4reel}.

At the beginning of \cite[Section 6]{GicaIndMath} A. Gica proved that 
if $p\equiv 7\pmod 8$ is prime and $M_{odd}(p)\leq 2$, 
then ${\mathcal Cl}_{\mathbb K}$ is cyclic generated by the ideal class of of any of the two prime ideals of 
${\mathbb K}={\mathbb Q}(\sqrt{-p})$ above the splitting prime $2$. 
However, using \cite[Lemma 2.2]{GicaIndMath} to obtain this result 
he has to assume that $p>2916$. 
But according to \cite[Theorem 1]{GicaIndMath} all these fourteen values 
of $p\equiv 7\pmod 8$ with $M_{odd}(p)\leq 2$
(with at most one possible exception) are $\leq 1423$. 
Hence his proof says nothing on the structure of ${\mathcal Cl}_{\mathbb K}$ for these fourteen $p$'s.
Our present approach in Lemma \ref{stepdequiv7pmod8} is clearly more satisfactory.

In \cite[page 5, the case $p\equiv 1\pmod 8$]{GicaIndMath} 
A. Gica refers to \cite[Theorem 1]{GicaJNT} for the fact that for $p\equiv 1\pmod 8$ 
there exists a unique positive integer $x<\sqrt{p}$ such that $p+x^2=2y^2$ for some $y\in {\mathbb Z}$. 
But in \cite[Theorem 1]{GicaJNT} only the existence of such an $x$ that is proved, not its uniqueness. 
In Lemma \ref{existence} we also give a proof of this existence.
Our proof of Theorem \ref{cased=p=1mod8} does not require this uniqueness, 
which is in fact true according to a proof A. Gica sent us on September 2024. 

In \cite[Theorem 7.1]{GicaIndMath} the authors missed the case $d=135$ 
given in Theorem \ref{dnotprimenotpq}.

A. Gica wanted to prove Theorem \ref{mainthd=2p}, 
for which we have provided a different proof than the one he sent us on November 2025. 
Theorems \ref{mainthd=2mod4} and \ref{mainthd=2mod4notsquarefree} 
present complete proofs of results stronger than \cite[Theorem 7.2]{GicaIndMath}.

{\small

}
\end{document}